\DeclareMathOperator{\CAT}{CAT}
\DeclareMathOperator{\BS}{BS}
\DeclareMathOperator{\diam}{diam}
\let\Cay\undefined
\DeclareMathOperator{\Cay}{Cay}
\theoremstyle{definition}
\newtheorem*{cons*}{Construction}
\newtheorem{cons}[thmctr]{Construction}
\newcommand{\conslabel}[1]{\label{cons:#1}}
\newcommand{\consref}[1]{\ref{cons:#1}}
\newcommand{\Consref}[1]{Construction~\consref{#1}}
\newcommand{\pitmref}[1]{(\itmref{#1})}
\newcommand{\Lemitmref}[2]{\Lemref{#1}\pitmref{#2}}
\renewcommand{\sth}{:}
\newcommand{\actson}{\curvearrowright}
\newcommand{\eqnlabel}[1]{\label{eqn:#1}}
\newcommand{\eqnref}[1]{\ref{eqn:#1}}
\newcommand{\peqnref}[1]{(\eqnref{#1})}
\newcommand{\pc}[1]{P^{\circ}_{#1}}
\title{Shortcut Graphs and Groups}
\author{Nima Hoda}
\address{D{\' e}partement de math{\' e}matiques et applications \\
  {\' E}cole normale sup{\' e}rieure, 45 rue d'Ulm, 75005 Paris, France
  \\ \ \\ Instytut Matematyczny,
  Uniwersytet Wroc\l awski\\
  pl.\ Grun\-wal\-dzki 2/4,
  50--384 Wroc{\l}aw, Poland}
\thanks{This work was supported in part by an NSERC Postgraduate
  Scholarship, a Pelletier Fellowship, a Graduate Mobility Award, an
  ISM Scholarship, the NSERC grant of Daniel T. Wise and the ERC grant
  GroIsRan.}
\email{nima.hoda@mail.mcgill.ca}
\date{\today}
\keywords{nonpositive curvature, %
  filling invariant, %
  hyperbolic group, %
  CAT(0) cube complex, %
  median graph, %
  systolic complex, %
  bridged graph, %
  quadric complex, %
  hereditary modular graph, %
  Coxeter group, %
  Baumslag-Solitar group}
\subjclass[2010]{20F65, % Geometric group theory
  20F67, % Hyperbolic groups and nonpositively curved groups
  05C12} % Distance in graphs
\begin{document}

\begin{abstract}
  We introduce shortcut graphs and groups.
  \texorpdfstring{\newterm{Shortcut graphs}}{Shortcut graphs} are
  graphs in which cycles cannot embed without metric distortion.
  \texorpdfstring{\newterm{Shortcut groups}}{Shortcut groups} are
  groups which act properly and cocompactly on shortcut graphs.  These
  notions unify a surprisingly broad family of graphs and groups of
  interest in geometric group theory and metric graph theory,
  including: the \texorpdfstring{$1$}{1}-skeletons of systolic and
  quadric complexes (in particular finitely presented C(6) and
  C(4)-T(4) small cancellation groups),
  \texorpdfstring{$1$}{1}-skeletons of finite dimensional
  \texorpdfstring{$\CAT(0)$}{CAT(0)} cube complexes, hyperbolic
  graphs, standard Cayley graphs of finitely generated Coxeter groups
  and the standard Cayley graph of the Baumslag-Solitar group
  \texorpdfstring{$\BS(1,2)$}{BS(1,2)}.  Most of these examples
  satisfy a strong form of the shortcut property.

  The shortcut properties also have important geometric group
  theoretic consequences.  We show that shortcut groups are finitely
  presented and have exponential isoperimetric and isodiametric
  functions.  We show that groups satisfying the strong form of the
  shortcut property have polynomial isoperimetric and isodiametric
  functions.
\end{abstract}

\maketitle

\tableofcontents

\section{Introduction}

A main current in geometric group theory is the study of groups acting
on spaces satisfying various kinds of combinatorial nonpositive
curvature properties.  These spaces are typically associated with
graphs having nice metric properties.  For example, the $1$-skeletons of
$\CAT(0)$ cube complexes \cite{Roller:1998, Gerasimov:1998,
  Chepoi:2000}, systolic complexes \cite{Chepoi:2000} and quadric
complexes \cite{Hoda:2017}, all of which arose independently in the
geometric group theory and metric graph theory literature
\cite{Gromov:1987, Januszkiewicz:2006, Haglund:2003, Hoda:2017,
  Avann:1961, Nebesky:1971, Klavzar:1999, Soltan:1983, Bandelt:1988}.
In this paper we introduce a metric property that captures an aspect
of nonpositive curvature that is shared by these graphs and a
surprisingly large number of other graphs of importance in group
theory and metric graph theory.

A \defterm{shortcut graph} is a graph $\Gamma$ for which there is a
bound on the lengths of its isometrically embedded cycles.  A
\defterm{strongly shortcut graph} $\Gamma$ has a bound on the lengths
of its $K$-bilipschitz cycles, for some fixed $K > 1$.  A group is
\defterm{(strongly) shortcut} if it acts properly and cocompactly on a
(strongly) shortcut graph.

Initial motivation for the study of shortcut graphs and groups arose
from systolic and quadric complexes.  Chepoi characterized systolic
complexes as those flag simplicial complexes whose $1$-skeletons
contain isometrically embedded cycles only of length three (i.e. their
only isometrically embedded cycles are triangles) \cite{Chepoi:2000}.
We similarly characterized quadric complexes as those square-flag
square complexes whose $1$-skeletons contain isometrically embedded
cycles only of length four \cite{Hoda:2017}.  Hence the $1$-skeletons
of systolic and quadric complexes are shortcut.  In particular,
systolic and quadric groups, and thus finitely presented C(6) and
C(4)-T(4) small cancellation groups, are shortcut \cite{Wise:2003,
  Hoda:2017}.  As we will see, many prominent classes of graphs and
groups satisfy the shortcut property.

\subsection{Summary of results}

The following theorems summarize our main results.

\begin{mainthm}[\Corref{fundgroup}]
  \mainthmlabel{finpres} Shortcut groups are finitely presented.
\end{mainthm}

\begin{mainthm}[\Thmref{isoper}, \Thmref{isodiam}]
  \mainthmlabel{filling} Shortcut graphs and groups have exponential
  isoperimetric and isodiametric functions.  Strongly shortcut graphs
  and groups have polynomial isoperimetric and isodiametric functions.
  Consequently, shortcut groups have decidable word problem.
\end{mainthm}

% \begin{mainthm}[\Thmref{prod}]
%   \mainthmlabel{prods} Products of (strongly) shortcut graphs and
%   groups are (strongly) shortcut.
% \end{mainthm}

% \begin{mainthm}[\Corref{gosg}]
%   \mainthmlabel{gog} Graphs of (strongly) shortcut groups with finite
%   edge groups are (strongly) shortcut.  In particular, amalgamated
%   free products and HNN extensions of (strongly) shortcut groups over
%   finite subgroups are (strongly) shortcut.
% \end{mainthm}

\begin{mainthm}[\Thmref{hypgraph}, \Corref{cccs}, \Thmref{sys_quad_ss}, \Thmref{coxs}, \Thmref{cayz}, \Thmref{cayzz}]
  \mainthmlabel{examps} The following classes of graphs are strongly
  shortcut.
  \begin{itemize}
  \item Hyperbolic graphs
  \item $1$-skeletons of finite dimensional $\CAT(0)$ cube complexes
  \item $1$-skeletons of systolic complexes
  \item $1$-skeletons of quadric complexes
  \item Standard Cayley graphs of finitely generated Coxeter groups
  \item All Cayley graphs of $\Z$ and $\Z^2$
  \end{itemize}
  In particular, hyperbolic groups, cocompactly cubulated groups,
  systolic groups, quadric groups, $C(6)$ small cancellation groups,
  $C(4)$-$T(4)$ small cancellation groups and Coxeter groups are all
  strongly shortcut.
\end{mainthm}

\begin{mainthm}[\Thmref{bss}, \Thmref{bsns}]
  \mainthmlabel{bsgp} The Baumslag-Solitar group $\BS(1,2)$ is
  shortcut but not strongly shortcut.  Moreover, $\BS(1,2)$ has a
  Cayley graph which is shortcut and a Cayley graph which is not
  shortcut.
\end{mainthm}

\subsection{Structure of the paper}

In \Secref{defs} we present the main definitions of the paper.  In
\Secref{basicprops} we prove basic properties of shortcut graphs and
groups.  In \Secref{fill} we construct disk diagrams for shortcut
graphs and study their filling invariants.  In \Secref{combin} we
prove that products of (strongly) shortcut graphs are (strongly)
shortcut and that finite graphs of (strongly) shortcut groups with
finite edge groups are (strongly) shortcut.  In \Secref{examp} we
prove that various classes of graphs and groups are strongly shortcut.
In \Secref{bsgp} we study the shortcut property in Cayley graphs of
$\BS(1,2)$.

\subsection*{Acknowledgements}
The author would like to acknowledge Daniel T. Wise for many
invaluable discussions throughout the course of this research and for
his encouragement from its outset.  Thanks to Piotr Przytycki for a
very detailed list of comments and corrections for a version of this
paper included in the author's thesis.  Finally, thanks to the
reviewer for a very thorough and detailed review.

\section{Definitions}
\seclabel{defs}

A \defterm{graph} $\Gamma$ is a $1$-dimensional polyhedral complex
whose edges are isometric to $[0,1] \subset \R$.  In this way each
graph is equipped with both the structure of a cellular complex with
edges and vertices and the structure of a geodesic metric giving a
distance between any pair of its points.  A \defterm{combinatorial
  map} of graphs is a continuous map $\Gamma_1 \to \Gamma_2$ in which
each vertex of $\Gamma_1$ maps onto a vertex of $\Gamma_2$ and each
closed edge of $\Gamma_1$ maps onto a vertex or closed edge of
$\Gamma_2$.  A combinatorial map is \defterm{degenerate} if some
closed edge maps onto a vertex.  Otherwise it is
\defterm{nondegenerate}.

\subsection{Isometric and almost isometric cycles}

Let $\Gamma$ be a graph.  A \defterm{combinatorial path in $\Gamma$}
is a nondegenerate combinatorial map $P \to \Gamma$ from a graph $P$
that is homeomorphic to a compact interval of $\R$.  A \defterm{cycle}
$C$ is a graph homeomorphic to a circle.  A \defterm{cycle in
  $\Gamma$} is a nondegenerate combinatorial map $C \to \Gamma$ from a
cycle $C$.  The \defterm{length} of a path or cycle, denoted $|P|$ or
$|C|$, is the number of its edges.

A cycle $f \colon C \to \Gamma$ is \defterm{isometric} if it is an
isometric embedding.  \Corref{isomisomemb} below shows that $f$ is
isometric if and only if
\[ d_{\Gamma}\bigl(f(p),f(q)\bigr) \ge \frac{|C|}{2} \] for every
antipodal pair of points $p,q \in C$.  With this in mind we give the
following definition.  A cycle $f \colon C \to \Gamma$ is
\defterm{$\xi$-almost isometric}, for $\xi \in (0,1)$, if
\[ d_{\Gamma}\bigl(f(p),f(q)\bigr) \ge \xi \frac{|C|}{2} \] for every
antipodal pair of points $p,q \in C$.  One may imagine that if $f$ is
not isometric then there is a ``shortcut'' in $\Gamma$ between a pair
of its antipodes and if $f$ is not $\xi$-almost isometric then there
is such a ``shortcut'' which reduces the distance by a constant
factor.

\subsection{Shortcut graphs and groups}
A connected simplicial graph $\Gamma$ is \defterm{shortcut} if, for
some $\theta \in \N$, every isometric cycle $C \to \Gamma$ has length
$|C| \le \theta$.  A connected simplicial graph $\Gamma$ is
\defterm{strongly shortcut} if, for some $\theta \in \N$ and some
$\xi \in (0,1)$, every $\xi$-almost isometric cycle $C \to \Gamma$ has
length $|C| \le \theta$.

\begin{rmk}
  It follows immediately from the definitions that if $\Gamma'$ is an
  isometrically embedded subgraph of a (strongly) shortcut graph
  $\Gamma$ then $\Gamma'$ is also (strongly) shortcut.
\end{rmk}

\Propref{bilip} below says that $\Gamma$ is strongly shortcut if and
only if there is a $K > 1$ and a bound on the lengths of the
$K$-bilipschitz cycles of $\Gamma$.  Since a $1$-bilischitz map is the
same thing as an isometric embedding, we can thus define both
properties together as follows: $\Gamma$ is shortcut if there is a
$K \ge 1$ and a bound on the lengths of the $K$-bilipschitz cycles of
$\Gamma$; if this $K$ can be chosen strictly greater than $1$ then
$\Gamma$ is strongly shortcut.

A group $G$ is \defterm{(strongly) shortcut} if it acts properly and
cocompactly on a (strongly) shortcut graph.

\section{Basic properties}
\seclabel{basicprops}

In this section we prove some basic properties of (strongly) shortcut
graphs and groups.  In particular, we prove that the strong shortcut
property is equivalent to the existence of a $K > 1$ for which there
is a bound on the lengths of the $K$-bilipschitz cycles.  This
characterization is particularly interesting in light of a
characterization of Hume and MacKay of a hyperbolic graph as a graph
for which there is a bound on the lengths of its $18$-bilipschitz
cycles \cite{hume_mackay:poorly_connected:2020}.  We also show that a
(strongly) shortcut group acts freely and cocompactly on a (strongly)
shortcut graph.

\begin{prop}
  \proplabel{pqna} Let $\Gamma$ be a graph and let $\bar \xi \in (0,1]$.  A
  cycle $f \colon C \to \Gamma$ satisfies
  \[ d_{\Gamma}\bigl(f(p),f(q)\bigr) \ge \bar \xi \frac{|C|}{2} \] for
  every antipodal pair of points $p,q \in C$ if and only if
  \[ d_{\Gamma}\bigl(f(p),f(q)\bigr) \ge d_C(p,q) -
    (1-\bar \xi)\frac{|C|}{2} \] for every pair of points $p,q \in C$.
\end{prop}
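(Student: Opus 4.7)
The plan is to set $n = |C|/2$ (the distance along $C$ between antipodal points) and handle the two implications separately, with the forward direction being the substantive one.

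The backward implication is immediate: if $p,q \in C$ are antipodal then $d_C(p,q) = n$, so the second inequality specializes to $d_\Gamma(f(p),f(q)) \ge n - (1-\bar\xi)n = \bar\xi n$, which is the first inequality.

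For the forward implication, fix an arbitrary pair $p,q \in C$ and let $p' \in C$ denote the antipode of $p$. The geometric heart of the argument is the identity
\[ d_C(p,q) + d_C(p',q) = n, \]
which I would verify by a direct parametrization of $C$ as $\R/2n\Z$ with $p = 0$ and $p' = n$: for $q = t$ with $t \in [0,2n]$, a case split on whether $t \le n$ or $t \ge n$ shows that in either case the two cycle-distances sum to $n$. Since $f$ is a combinatorial map, the image under $f$ of the arc in $C$ from $p'$ to $q$ realizing $d_C(p',q)$ is a path in $\Gamma$ of length at most $d_C(p',q)$, giving the bound $d_\Gamma(f(p'),f(q)) \le d_C(p',q)$. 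Then the triangle inequality in $\Gamma$ yields
\[ d_\Gamma(f(p),f(q)) \ge d_\Gamma(f(p),f(p')) - d_\Gamma(f(p'),f(q)) \ge \bar\xi n - d_C(p',q) = d_C(p,q) - (1-\bar\xi)n, \]
where the first lower bound uses the assumed antipodal inequality at the antipodal pair $(p,p')$ and the last equality uses the identity above.

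There is no serious obstacle here; the only thing to be mildly careful about is that $p$ and $q$ are allowed to be arbitrary points of $C$ (not just vertices), so the antipode $p'$ and the arc-length identity must be phrased in the metric $C$ rather than combinatorially, and the bound $d_\Gamma(f(p'),f(q)) \le d_C(p',q)$ should be justified by noting that any arc in $C$ maps to a (possibly non-injective) path of the same length in $\Gamma$.
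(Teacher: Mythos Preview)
Your proof is correct and follows essentially the same route as the paper's: both introduce the antipode $p'$ of $p$, use that $f$ is $1$-Lipschitz to bound $d_{\Gamma}\bigl(f(p'),f(q)\bigr)$ by $d_C(p',q)$, invoke the identity $d_C(p,q) + d_C(p',q) = \frac{|C|}{2}$, and apply the triangle inequality in $\Gamma$ to the triple $f(p),f(p'),f(q)$. The only cosmetic difference is that the paper writes the triangle inequality in the form $d_{\Gamma}\bigl(f(p),f(p')\bigr) \le d_{\Gamma}\bigl(f(p),f(q)\bigr) + d_{\Gamma}\bigl(f(q),f(p')\bigr)$ and then rearranges, whereas you use the equivalent reverse form directly.
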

\begin{proof}
  The ``if'' part follows by applying the inequality to each antipodal
  pair of $p$ and $q$.  To prove the ``only if'' part, let
  $p,q \in C$.  Let $p'$ be the antipode of $p$.  Then, since $f$ is
  $1$-Lipschitz, we have
  \begin{align*}
    d_{\Gamma}\bigl(f(p),f(p')\bigr) &\le
    d_{\Gamma}\bigl(f(p),f(q)\bigr) + d_{\Gamma}\bigl(f(q),f(p')\bigr) \\
    &\le d_{\Gamma}\bigl(f(p),f(q)\bigr) + d_C(q,p') \\ &=
    d_{\Gamma}\bigl(f(p),f(q)\bigr) + d_C(p,p') - d_C(p,q) \\ &=
    d_{\Gamma}\bigl(f(p),f(q)\bigr) + \frac{|C|}{2} - d_C(p,q)
  \end{align*}
  but $\bar \xi\frac{|C|}{2} \le d_{\Gamma}\bigl(f(p),f(p')\bigr)$ and so
  we have
  $d_{\Gamma}\bigl(f(p),f(q)\bigr) \ge d_C(p,q) -
  (1-\bar \xi)\frac{|C|}{2}$.
\end{proof}

\begin{cor}
  \corlabel{isomisomemb} Let $\Gamma$ be a graph and let
  $f \colon C \to \Gamma$ be a cycle.  Then $f$ is isometric if and
  only if \[ d_{\Gamma}\bigl(f(p),f(q)\bigr) \ge \frac{|C|}{2} \] for
  every antipodal pair of points $p,q \in C$.
\end{cor}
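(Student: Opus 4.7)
The plan is to derive the corollary as the $\bar \xi = 1$ case of \Propref{pqna}, combined with the standard observation that any combinatorial map of graphs is $1$-Lipschitz with respect to the induced metrics.

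First I would dispatch the ``only if'' direction. If $f$ is an isometric embedding then $d_{\Gamma}(f(p),f(q)) = d_C(p,q)$ for all $p,q \in C$, and for an antipodal pair in a cycle of length $|C|$ we have $d_C(p,q) = |C|/2$, giving the desired lower bound (in fact with equality).

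For the ``if'' direction, I would invoke \Propref{pqna} with $\bar \xi = 1$: the hypothesis that $d_{\Gamma}(f(p),f(q)) \ge |C|/2$ for every antipodal pair translates immediately into
\[ d_{\Gamma}\bigl(f(p),f(q)\bigr) \ge d_C(p,q) \]
for every pair $p,q \in C$, since the correction term $(1-\bar \xi)|C|/2$ vanishes. For the reverse inequality, I would note that $f$, as a combinatorial (hence continuous and cellular) map from $C$ to $\Gamma$ with edges of unit length, is $1$-Lipschitz, so $d_{\Gamma}(f(p),f(q)) \le d_C(p,q)$ automatically. Combining the two inequalities gives equality, i.e., $f$ is an isometric embedding.

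There is essentially no obstacle here: the corollary is just the specialization $\bar \xi = 1$ of the preceding proposition, with the extra remark that the trivial $1$-Lipschitz upper bound upgrades the resulting lower bound to an equality. The only minor point worth stating explicitly is why $f$ is $1$-Lipschitz, which follows from the fact that combinatorial maps of graphs send each edge isometrically onto an edge (they are nondegenerate by the definition of a cycle in $\Gamma$ given earlier in this section).
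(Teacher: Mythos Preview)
Your proposal is correct and matches the paper's own proof essentially verbatim: the paper simply says the corollary follows from $f$ being $1$-Lipschitz together with \Propref{pqna} applied with $\bar\xi = 1$. You have merely unpacked those two ingredients explicitly.
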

\begin{proof}
  This follows from the fact that $f$ is $1$-Lipschitz and by applying
  \Propref{pqna} with $\bar \xi = 1$.
\end{proof}

\begin{prop}
  \proplabel{aadist} Let $\Gamma$ be a graph and let
  $f \colon C \to \Gamma$ be a cycle in $\Gamma$ of length
  $|C| \ge 4$.  If $f$ is not isometric then
  \[ d_{\Gamma}\bigl(f(u),f(v)\bigr) < d_C(u,v) \] for some pair of
  vertices $u,v \in C^0$ with
  $d_C(u,v) \ge \Bigl\lfloor\frac{|C|}{2}\Bigr\rfloor - 1$.  If $f$ is
  not $\xi$-almost isometric, for some $\xi \in (0,1)$, then
  \[ d_{\Gamma}\bigl(f(u),f(v)\bigr) < \xi d_C(u,v) \] for some pair
  of vertices $u,v \in C^0$ with
  $d_C(u,v) \ge \Bigl\lfloor\frac{|C|}{2}\Bigr\rfloor - 1$.
\end{prop}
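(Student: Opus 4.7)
The plan is to prove both statements simultaneously: from any antipodal pair witnessing the failure of (almost) isometry, I will extract a vertex pair witnessing the desired strict distance inequality. By \Corref{isomisomemb} in the isometric case and directly from the definition in the $\xi$-almost isometric case, the hypothesis gives an antipodal pair $p, q \in C$ with $d_\Gamma(f(p), f(q)) < \xi |C|/2$ (where for uniformity I take $\xi = 1$ in the isometric case).

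If both $p$ and $q$ happen to be vertices of $C$, I simply take $u = p$, $v = q$: then $d_C(u, v) = |C|/2 \ge \lfloor |C|/2 \rfloor - 1$, and the required distance inequality is immediate. Otherwise, I would pick edges $[u_0, u_1] \ni p$ and $[v_0, v_1] \ni q$ of $C$ and set $\delta_i = d_C(u_i, p)$ and $\delta_j' = d_C(v_j, q)$, each in $[0, 1]$. Since at least one of $p, q$ lies in the interior of its edge, $\delta_i + \delta_j' < 2$ for every pair $i, j$; combining this with the integrality of $d_C(u_i, v_j)$ and the cyclic triangle inequality $\delta_i + d_C(u_i, v_j) + \delta_j' \ge d_C(p, q) = |C|/2$ then gives $d_C(u_i, v_j) \ge \lfloor |C|/2 \rfloor - 1$ for every $i, j$.

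From here the argument splits on whether $f$ identifies a vertex of $[u_0, u_1]$ with one of $[v_0, v_1]$. If $f(u_i) = f(v_j)$ for some pair, that pair already works, since $d_\Gamma(f(u_i), f(v_j)) = 0 < \xi d_C(u_i, v_j)$ (using $d_C(u_i, v_j) \ge 1$ and $\xi > 0$). Otherwise, the four vertex images $f(u_i), f(v_j)$ are distinct, so $f([u_0, u_1])$ and $f([v_0, v_1])$ are distinct edges of $\Gamma$ sharing no endpoint, hence disjoint as subsets of $\Gamma$. Every path in $\Gamma$ from $f(p)$ to $f(q)$ must then exit one edge through some $f(u_i)$ and enter the other through some $f(v_j)$, yielding
\[
d_\Gamma(f(p), f(q)) = \min_{i, j \in \{0, 1\}} \bigl(\delta_i + d_\Gamma(f(u_i), f(v_j)) + \delta_j'\bigr).
\]
Fixing $(i, j)$ at which the minimum is attained, the hypothesis gives $d_\Gamma(f(u_i), f(v_j)) < \xi|C|/2 - \delta_i - \delta_j'$, and since $\xi \le 1$ and $d_C(u_i, v_j) \ge |C|/2 - \delta_i - \delta_j'$ one concludes
\[
d_\Gamma(f(u_i), f(v_j)) < \xi|C|/2 - (\delta_i + \delta_j') \le \xi \bigl(|C|/2 - \delta_i - \delta_j'\bigr) \le \xi d_C(u_i, v_j),
\]
finishing the argument.

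The main technical point I expect to have to be careful about is the minimum formula above: one must use the nondegeneracy of $f$, which ensures $f$ restricts to an isometry on each edge of $C$, together with the fact that the two image edges share no common vertex, to conclude they are genuinely disjoint in $\Gamma$ and that any path between their interior points must pass through one endpoint of each. The remaining arithmetic reduces to the single observation that $\xi \le 1$ makes $\xi$-scaling absorb the endpoint deficits $\delta_i + \delta_j'$.
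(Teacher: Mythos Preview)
Your proof is correct and follows essentially the same strategy as the paper's: start from an antipodal pair $p,q$ witnessing the failure, then pass to nearby vertices of $C$ while controlling both the $C$-distance and the $\Gamma$-distance. The paper phrases this as taking a geodesic $\alpha$ from $f(p)$ to $f(q)$ and reading off the first and last vertex it meets, which are necessarily endpoints of the image edges containing $f(p)$ and $f(q)$; you instead enumerate all four endpoint pairs and split on whether two of them coincide, recovering the same vertices via your minimum formula. The two organizations are equivalent---your minimizing $(i,j)$ is exactly the pair picked out by the geodesic's first and last vertex.
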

\begin{proof}
  Let $\bar \xi \in (0,1]$.  Suppose
  \[ d_{\Gamma}\bigl(f(p),f(q)\bigr) < \bar \xi \frac{|C|}{2} \] for
  some pair of antipodal points $p,q \in C$.  If $\bar \xi < 1$ then
  this is equivalent to $f$ not being $\bar \xi$-almost isometric and,
  otherwise, this is equivalent to $f$ not being isometric.

  Let $\alpha\colon [0,d] \to \Gamma$ be a geodesic from $f(p)$ to
  $f(q)$ where $d = d_{\Gamma}\bigl(f(p),f(q)\bigr)$.  If
  $\alpha^{-1}(\Gamma^0) = \emptyset$ then $f(p)$ and $f(q)$ are
  contained in the interior of some common edge $e$ of $\Gamma$.  Then
  the edges $e_1$ and $e_2$ of $C$ with $p \in e_1$ and $q \in e_2$
  map onto $e$.  Then there are endpoints $u \in e_1$ and $v \in e_2$
  such that $f(u) = f(v)$ and $d_C(p,u) + d_C(q,v) \le 1$.  So we have
  $d_{\Gamma}\bigl(f(u),f(v)\bigr) = 0$ and
  $d_C(u,v) \ge d_C(p,q) - d_C(p,u) - d_C(q,v) \ge \frac{|C|}{2} - 1$.
  
  Assume now that $p$ and $q$ do not map to the same edge in $\Gamma$.
  Let $\alpha^{-1}(\Gamma^0) = \{x_1,x_2, \ldots, x_k\}$ with
  $0 \le x_1 < x_2 < \cdots < x_k \le d$.  Then $\alpha|_{[0,x_1]}$
  and $\alpha|_{[x_k,d]}$ factor through $f$ so their images contain
  vertices $u,v \in C^0$ with $d_C(p,u) < 1$ and $d_C(q,v) < 1$ such
  that $f(u) = \alpha(x_1)$ and $f(v) = \alpha(x_k)$.  Moreover
  \begin{align*}
    d_{\Gamma}\bigl(f(u),f(v)\bigr)
    &= x_k - x_1 \\
    &= d - x_1 - (d - x_k) \\
    &= d - d_C(p,u) - d_C(q,v) \\
    &< \bar \xi d_C(p,q) - d_C(p,u) - d_C(q,v) \\
    &\le \bar \xi \bigl(d_C(p,q) - d_C(p,u) - d_C(q,v)\bigr) \\
    &\le \bar \xi d_C(u,v)
  \end{align*}
  and
  $d_C(u,v) \ge d_C(p,q) - d_C(p,u) - d_C(q,v) > \frac{|C|}{2} - 2$.
  Since $|C|$ and $d_C(u,v)$ are integers we obtain
  $d_C(u,v) \ge \Bigl\lfloor\frac{|C|}{2}\Bigr\rfloor - 1$
\end{proof}

\begin{rmk}
  In fact, if $f\colon C \to \Gamma$ is not isometric then we can
  improve the $u$ and $v$ obtained from \Propref{aadist} so that
  $d_C(u,v) \ge \Bigl\lfloor\frac{|C|}{2}\Bigr\rfloor$.  This does not
  hold for $f$ not $\xi$-almost isometric.  (Consider the quotient map
  from $C$ oriented and of even length that identifies two antipodal
  edges of $C$ in an orientation reversing way.)  In order to present
  a unified proof without additional case analysis, we give the weaker
  statement in \Propref{aadist}.
\end{rmk}

\begin{prop}
  \proplabel{bilip} Let $\Gamma$ be a graph.  Then $\Gamma$ is
  strongly shortcut if and only if there exists $K > 1$ such that
  there is a bound on the length of $K$-bilipschitz cycles of
  $\Gamma$.
\end{prop}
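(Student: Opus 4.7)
The plan is to prove the two implications separately.

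For the forward direction, suppose $\Gamma$ is strongly shortcut with constants $\xi \in (0,1)$ and $\theta$. Setting $K = 1/\xi > 1$, I would observe that any $K$-bilipschitz cycle $f \colon C \to \Gamma$ satisfies
\[
  d_{\Gamma}\bigl(f(p),f(q)\bigr) \ge \tfrac{1}{K} d_C(p,q) = \xi \tfrac{|C|}{2}
\]
for every antipodal pair $p, q \in C$, so $f$ is $\xi$-almost isometric and hence $|C| \le \theta$. Thus $\theta$ bounds the length of $K$-bilipschitz cycles.

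For the backward direction, assume $K > 1$ and $\theta'$ bound the lengths of $K$-bilipschitz cycles of $\Gamma$. I would fix $\xi \in (1/K, 1)$ and argue that $\xi$-almost isometric cycles of $\Gamma$ have length bounded in terms of $K$, $\xi$ and $\theta'$. For $f \colon C \to \Gamma$ a $\xi$-almost isometric cycle, \Propref{pqna} supplies
\[
  d_{\Gamma}\bigl(f(p),f(q)\bigr) \ge d_C(p,q) - (1-\xi)\tfrac{|C|}{2}
\]
for every pair $p, q \in C$. A short computation shows that this already implies the $K$-bilipschitz lower bound $d_{\Gamma}(f(p),f(q)) \ge d_C(p,q)/K$ at every pair with $d_C(p,q) \ge \frac{(1-\xi)K|C|}{2(K-1)}$; thus any pair at which $f$ fails to be $K$-bilipschitz must lie at a small $C$-distance.

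The plan then is to surger at such a failure pair $u, v$: replace the short arc of $C$ from $u$ to $v$ by a geodesic of $\Gamma$ between $f(u)$ and $f(v)$ (whose length is strictly less than $d_C(u,v)/K$), producing a strictly shorter nondegenerate combinatorial cycle. Iterating this procedure should eventually yield a $K$-bilipschitz cycle, which has length at most $\theta'$ by hypothesis; tracking the cumulative length reduction---each step reduces the length by an amount governed by the failure distance $d_C(u,v)$, itself bounded in terms of $|C|$, $\xi$ and $K$---will bound the original $|C|$.

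The hard part will be the surgery step: one must carefully check that each iteration extracts an honest combinatorial cycle and that it retains sufficient control on its metric behaviour (perhaps with adjusted constants) so that the procedure genuinely terminates at a $K$-bilipschitz cycle. Once this bookkeeping is set up, combining it with the estimates furnished by \Propref{pqna} and the hypothesis on $K$-bilipschitz cycles should yield the required uniform bound.
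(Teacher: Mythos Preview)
Your outline matches the paper's strategy: surgery at failure pairs, iterate until $K$-bilipschitz, bound the cumulative length loss. The forward direction is handled identically. For the backward direction, the paper resolves the ``hard part'' you flag as follows, and the details are substantial enough that your sketch underestimates them.

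First, the choice $\xi \in (1/K,1)$ is not tight enough; the paper requires $\xi > 1 - \frac{(K-1)^3}{13K^2(K+1)}$, i.e.\ $\xi$ much closer to $1$. Second, at each step one surgers at the \emph{furthest} failure pair $u_i,v_i$ in the current cycle $C_i$, removing the geodesic segment $Q_i \subset C_i$ between them and inserting a $\Gamma$-geodesic $R_i$. The main technical lemma, occupying most of the proof, is that with these two choices each $Q_i$ lies entirely in the portion of $C_i$ coming from the original cycle $C$ and never meets a previously-inserted $R_j$; hence the $Q_i$ are pairwise disjoint as subsets of $C$. One then bounds $\sum_i |Q_i| \le (1-\xi)\frac{K(3K-2)}{(K-1)^2}\cdot\frac{|C|}{2}$ using the $\xi$-almost-isometric property of the original $f$ via \Propref{pqna}, and concludes $|C_n| \ge \frac{7}{8}|C|$, whence $|C| < \frac{8}{7}\theta$.

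The disjointness claim is the essential missing ingredient in your plan. Without it, successive surgeries could cut into previously-inserted geodesics $R_j$, and then the new cycle is no longer governed by the $\xi$-almost-isometric inequality of the original $f$; your proposed ``adjusted constants'' would degrade at each step and one loses control of both the number of iterations and the total length reduction. The paper's choice of the furthest failure pair and the stringent lower bound on $\xi$ are exactly what make the disjointness argument go through.
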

\begin{proof}
  If a cycle $f \colon C \to \Gamma$ is not $\xi$-almost isometric
  then, for some pair of antipodal points $p,q \in C$,
  \[ \xi d_C(p,q) = \xi \frac{|C|}{2} >
    d_{\Gamma}\bigl(f(p),f(q)\bigr) \] and so $f$ is not
  $\frac{1}{\xi}$-bilipschitz.  This proves the ``only if'' part of
  the proposition.
  
  To prove the ``if'' part of the proposition, suppose $\theta$ bounds
  the length of the $K$-bilipschitz cycles of $\Gamma$ where $K > 1$.
  Let $1 - \frac{(K-1)^3}{13K^2(K+1)} < \xi < 1$.  We will show that
  there is a bound on the lengths of the $\xi$-almost isometric cycles
  of $\Gamma$.  Let $f \colon C \to \Gamma$ be a $\xi$-almost
  isometric cycle of $\Gamma$.  We will define a sequence of paths
  $(P_i)$, a sequence of cycles $(C_i)_i$, a sequence of finite graphs
  $(\Gamma_i)_i$ and sequences of combinatorial maps as in the
  following commuting diagram. \[ \begin{tikzcd}
    &P_0 \ar[d,hook] \ar[dr] & P_1 \ar[d,hook] \ar[dr] & \cdots \ar[dr] & P_{n-1} \ar[d,hook] \ar[dr] \\
    C \ar[r,equal] \ar[d,equal] &C_0 \ar[dr] \ar[d,hook] & C_1 \ar[dr] \ar[d,hook] & \cdots \ar[dr] & C_{n-1} \ar[dr] \ar[d,hook] & C_n \ar[d,hook] \\
    C \ar[r,equal] \ar[ddrr,bend right=15,pos=0.3,"f"] &\Gamma_0 \ar[r] \ar[ddr,bend right=10,pos=0.3,"f_0"] & \Gamma_1 \ar[r] \ar[dd,pos=0.2,"f_1"] & \cdots \ar[r] & \Gamma_{n-1} \ar[ddll,bend left=10,pos=0.22,"f_{n-1}"'] \ar[r] & \Gamma_n \ar[ddlll,bend left=10,pos=0.25,"f_n"'] \\
    \\
    &&\Gamma
  \end{tikzcd} \]
  Where it makes sense, we will use the same notation to refer to
  points and subspaces as we do to refer to their images under maps.
  We begin with $\Gamma_0 = C_0 = C$ and $f_0 = f$.  Suppose we
  inductively have
  $C_i \hookrightarrow \Gamma_i \xrightarrow{f_i} \Gamma$.  If the
  composition of these maps is $K$-bilipschitz then we terminate the
  sequence with $n = i$.  Otherwise, let $u_i,v_i \in C_i^0$ be a
  furthest pair of vertices in $C_i$ for which
  $d_{\Gamma}\bigl(f_i(u_i),f_i(v_i)\bigr) < \frac{1}{K}
  d_{C_i}(u_i,v_i)$.
  Let $Q_i$ be a geodesic segment of $C_i$ between $u_i$ and $v_i$.
  Let $P_i$ be the closure of the complement of $Q_i$ and let
  $\pc{i}$ be the interior of $P_i$.  Let $R_i \to \Gamma$ be a
  geodesic from $f_i(u_i)$ to $f_i(v_i)$.  We obtain
  $f_{i+1} \colon \Gamma_{i+1} \to \Gamma$ from $f_i$ and
  $R_i \to \Gamma$ by identifying the endpoints of $R_i$ with
  $\{u_i,v_i\}$.  Let $C_{i+1} = P_i \cup R_i$ in $\Gamma_{i+1}$.  The
  sequence always terminates since $|C_i|$ is strictly decreasing.

  Our goal is to show that $\frac{|C_n|}{|C|}$ is uniformly bounded
  away from zero.  Thus we will show that if we have arbitrarily long
  $\xi$-almost isometric cycles then we must also have arbitrarily
  long $K$-bilipschitz cycles.
  
  For each $i$, the interior $\pc{i}$ embeds in both $C_i$ and
  $C_{i+1}$.  Let $\pc{0,j}$ be the limit of the diagram
  \[ \begin{tikzcd}
    \pc{0} \ar[d,hook] \ar[dr,hook] &
    \pc{1} \ar[d,hook] \ar[dr,hook] &
    \cdots \ar[d,hook] \ar[dr,hook] &
    \pc{j-1} \ar[d,hook] \ar[dr,hook] \\
    C_0 & C_1 & \cdots & C_{j-1} & C_j
  \end{tikzcd} \]
  in the category of topological spaces and continuous maps.
  Concretely, we have $\pc{0,0} = C_0$ and $\pc{0,1} = \pc{0}$ and
  $\pc{0,j} = \pc{0,j-1} \cap \pc{j-1}$ where the intersection is
  taken in $C_{j-1}$.  Thus we have the following commutative
  diagram of embeddings.
  \[ \begin{tikzcd}
    \pc{0,1} \ar[d,equal] &
    \pc{0,2} \ar[d,hook] \ar[l,hook] &
    \pc{0,3} \ar[d,hook] \ar[l,hook] &
    \cdots \ar[d,hook] \ar[l,hook] &
    \pc{0,n-1} \ar[d,hook] \ar[l,hook] &
    \pc{0,n} \ar[d,hook] \ar[l,hook] \\
    \pc{0} \ar[d,hook] \ar[dr,hook] &
    \pc{1} \ar[d,hook] \ar[dr,hook] &
    \pc{2} \ar[d,hook] \ar[dr,hook] &
    \cdots \ar[d,hook] \ar[dr,hook] &
    \pc{n-2} \ar[d,hook] \ar[dr,hook] &
    \pc{n-1} \ar[d,hook] \ar[dr,hook] \\
    C_0 & C_1 & C_2 & \cdots & C_{n-2} & C_{n-1} & C_n
  \end{tikzcd} \]
  We can think of $\pc{0,j}$ as original points of $C$ that are not
  replaced until at least step $j$ of the construction of the $C_i$,
  where the $i$th step of the construction refers to the operation of
  replacing $Q_i \to \Gamma$ with $R_i \to \Gamma$ in order to obtain
  $C_{i+1} \to \Gamma$ from $C_i \to \Gamma$.

  Fix $j$ and suppose $Q_i \subset \pc{0,i}$, for all $i < j$, where
  $\pc{0,i}$ is viewed as a subspace of $C_i$ via the inclusion
  $\pc{0,i} \hookrightarrow C_i$.  So, for $i<j$, we have lifts
  $Q_i \hookrightarrow \pc{0,i}$ as in the diagram
  \[ \begin{tikzcd}
    Q_0 \ar[d,hook] &
    Q_1 \ar[d,hook] &
    Q_2 \ar[d,hook] &
    \cdots \ar[d,hook] &
    Q_{j-2} \ar[d,hook] &
    Q_{j-1} \ar[d,hook] \\
    C &
    \pc{0,1} \ar[l,hook] &
    \pc{0,2} \ar[l,hook] &
    \cdots \ar[l,hook] &
    \pc{0,j-2} \ar[l,hook] &
    \pc{0,j-1} \ar[l,hook] &
    \pc{0,j} \ar[l,hook]
  \end{tikzcd} \]
  and we have disjoint unions $\pc{0,i} = Q_i \sqcup \pc{0,i+1}$.  So,
  for $i < j$, we may think of the $Q_i$ as subspaces of $C$.  The
  $Q_i$ are disjoint in $C$ and
  $C \setminus \pc{0,j} = \bigcup_{i=0}^{j-1}Q_i$.  Since $C_j$ is
  obtained from $C$ by replacing $Q_i$ with $R_i$, for each $i < j$,
  we see that the $R_i$, with $i < j$, embed disjointly in $C_j$ and
  the complement in $C_j$ of $\pc{0,j}$ is $\bigcup_{i=0}^{j-1}R_i$.
  Since $f$ is $\xi$-almost isometric we have
  \[ |Q_i| - (1 - \xi)\frac{|C|}{2} \le |R_i| < \frac{1}{K} |Q_i| \]
  for all $i < j$, by \Propref{pqna}.  Hence, if $i < j$ then we have
  the following inequality.
  \begin{equation}
    \eqnlabel{onebound}  K|R_i| < |Q_i| < (1-\xi)\biggl(\frac{K}{K-1}\biggr)
    \frac{|C|}{2}
  \end{equation}
  Moreover, we can find a pair of points $p,q$ in the closure of
  $C \setminus \bigl(\bigcup_{i=1}^{j-1}Q_i\bigr)$ at distance
  $d_C(p,q) \ge \frac{|C|}{2} - \frac{K(1-\xi)}{4(K-1)}|C|$.  Let
  $S_1$ and $S_2$ be the two segments of $C$ between $p$ and $q$.  If
  $I_1 = \{i < j \sth Q_i \subset S_1 \}$ then
  \begin{align*}
    d_{\Gamma}\bigl(f(p),f(q)\bigr)
    &\le |S_1| - \sum_{i \in I_1}|Q_i| + \sum_{i \in I_1}|R_i| \\
    &< |S_1| - \sum_{i \in I_1}|Q_i| + \frac{1}{K}\sum_{i \in I_1}|Q_i| \\
    &= |S_1| - \frac{K - 1}{K}\sum_{i \in I_1}|Q_i| \\
  \end{align*}
  and the same holds for $S_2$ and so, by \Propref{pqna},
  \begin{align*}
    &|C| - \frac{K-1}{K}\sum_{i<j}|Q_i| \\
    &\ge 2 d_{\Gamma}\bigl(f(p),f(q)\bigr) \\
    &\ge 2d_C(p,q) - (1-\xi)|C| \\
    &\ge |C| - \frac{K(1-\xi)}{2(K-1)}|C| - (1-\xi)|C|
  \end{align*}
  which gives us the following inequality.
  \begin{equation}
    \eqnlabel{sumbound}
    \sum_{i<j}|Q_i| \le (1-\xi)\biggl(\frac{K(3K-2)}{(K-1)^2}\biggr)\frac{|C|}{2}
  \end{equation}

  We will now prove that $Q_i \subset \pc{0,i}$ for $1 \le i < n$.
  For the sake of finding a contradiction, suppose $j \ge 1$ is the
  least integer with $Q_j \not\subset \pc{0,j}$.  As above we view the
  $R_i$ with $i < j$ as disjoint segments of $C_j$ with
  $C_j \setminus \bigcup_{i=0}^{j-1}R_i = \pc{0,j}$.  It is possible
  that, for some $i < j$, we have $Q_j \cap R_i \neq \emptyset$ in
  $C_j$ but $R_i \not\subset Q_j$.  This may happen for at most two
  $R_i$ since such $R_i$ must contain an endpoint of $Q_j$.  Let
  $Q_j^{-}$ be obtained from $Q_j$ by subtracting the interiors of any
  such $R_i$ and let $Q_j^{+} \to C_j$ extend
  $Q_j \hookrightarrow C_j$ so as to include full copies of any such
  $R_i$.  Let $Q^{-} \subset C$ be obtained from $Q_j^{-} \subset C_j$
  by replacing any $R_i \subset Q_j^{-}$, where $i < j$, with
  $Q_i \subset C$.  Let $Q^{+} \to C$ be obtained from
  $Q_j^{+} \to C_j$ by replacing any $R_i \hookrightarrow C_j$, where
  $i < j$, with $Q_i \hookrightarrow C$.  Let $R^{+} \to \Gamma$ be
  obtained from $Q_j^{+} \to \Gamma$ by replacing $Q_j \to \Gamma$
  with $R_j \to \Gamma$.  Then $R^{+} \to \Gamma$ and the composition
  $Q^{+} \to C \xrightarrow{f} \Gamma$ have the same endpoints in
  $\Gamma$ and we have
  \begin{align*}
    |R^{+}| &= |R_j| + |Q_j^{+} \setminus Q_j| \\
        &< \frac{1}{K}|Q_j| + |Q_j^{+} \setminus Q_j| \\
        &=\frac{1}{K}\bigl(|Q_j^{-}| + |Q_j \setminus Q_j^{-}|\bigr)
          + |Q_j^{+} \setminus Q_j| \\
        &\le\frac{1}{K}|Q_j^{-}| + |Q_j \setminus Q_j^{-}|
          + |Q_j^{+} \setminus Q_j| \\
        &=\frac{1}{K}|Q_j^{-}| + |Q_j^{+} \setminus Q_j^{-}| \\
        &\le \frac{1}{K}|Q_j^{-}| + \frac{1}{K}|Q^{+} \setminus Q^{-}| \\
        &= \frac{1}{K}|Q^{+}|
  \end{align*}
  where the final inequality follows from the fact that
  $Q_j^{+} \setminus Q_j^{-}$ consists of up to two copies of segments
  $R_i$ which are replaced with corresponding $Q_i$ in
  $Q^{+} \setminus Q^{-}$.  By assumption, $Q_j$ nontrivially
  intersects at least one $R_i$, with $i < j$.  Let $m$ be minimal
  such that $Q_j$ nontrivially intersects $R_m$.  Since $Q_j$ is not
  equal to this $R_m$ we see that $|Q^{+}| > |Q_m|$.  Hence, if
  $Q^{+} \to C_m$ were an isometric embedding then this would
  contradict the choice of $u_m$ and $v_m$.  So, $Q^{+} \to C_m$ is
  not an isometric embedding and so $|Q^{+}| > \frac{|C_m|}{2}$.  But
  then
  \[ |Q^{+}| > \frac{|C_m|}{2} \ge \frac{|C|}{2} - \sum_{k < m}|Q_k|
    \ge \frac{|C|}{2} -
    (1-\xi)\biggl(\frac{K(3K-2)}{(K-1)^2}\biggr)\frac{|C|}{2} \] by
  \peqnref{sumbound} while
  \[ |Q_0| < (1-\xi)\biggl(\frac{K}{(K-1)}\biggr) \frac{|C|}{2} \] by
  \peqnref{onebound}.  So $|Q^{+}| \le |Q_0|$ would imply
  \[ 1 - (1-\xi)\biggl(\frac{K(3K-2)}{(K-1)^2}\biggr) <
    (1-\xi)\biggl(\frac{K}{(K-1)}\biggr) \] which after some
  manipulation gives $\xi < 1 - \frac{(K-1)^2}{K(4K-3)}$ which one can
  show contradicts our choice of
  $\xi > 1 - \frac{(K-1)^3}{13K^2(K+1)}$.  Hence $|Q^{+}| > |Q_0|$ so
  if $Q^{+} \to C$ were an isometric embedding then this would
  contradict the choice of $u_0$ and $v_0$.  So, $Q^{+} \to C$ is not
  an isometric embedding and so $|Q^{+}| > \frac{|C|}{2}$.  On the
  other hand
  \begin{align*}
    |Q^{+}| &\le |Q^{-}| + 2 \max_{i<j}|Q_i| \\
            &\le |Q_j^{-}|  + \sum_{i<j} \bigl(|Q_i|-|R_i|\bigr) +
              2 \max_{i<j}|Q_i| \\
            &\le \frac{|C_j|}{2} + \sum_{i<j} \bigl(|Q_i|-|R_i|\bigr) +
              2 \max_{i<j}|Q_i| \\
            &\le \frac{|C|}{2} + 2\sum_{i<j} \bigl(|Q_i|-|R_i|\bigr) +
              2 \max_{i<j}|Q_i| \\
            &\le \frac{|C|}{2} + 4\sum_{i<j} |Q_i| \\
            &\le \frac{|C|}{2} + (1-\xi)\biggl(\frac{4K(3K-2)}{(K-1)^2}\biggr)\frac{|C|}{2}
  \end{align*}
  where the last inequality follows from \peqnref{sumbound}.  We
  have
  \begin{equation}
    \eqnlabel{xibound}
    1 - \xi < \frac{(K-1)^3}{13K^2(K+1)} < \frac{(K-1)^2}{12K(K+1)} =
    \frac{(K-1)^2}{4K(3K+3)} < \frac{(K-1)^2}{4K(3K-2)}
  \end{equation}
  and so $|Q^{+}| < |C|$ so $Q^{+}$ embeds in $C$ and the endpoints
  $u,v$ of $Q^{+}$ in $C$ are at distance
  \[ d_C(u,v) \ge \frac{|C|}{2} -
    (1-\xi)\biggl(\frac{4K(3K-2)}{(K-1)^2}\biggr)\frac{|C|}{2}. \] But
  we also have
  \[ d_{\Gamma}\bigl(f(u),f(v)\bigr) \le |R^{+}| \le
    \frac{1}{K}|Q^{+}| \le \frac{1}{K}\Biggl(\frac{|C|}{2} +
    (1-\xi)\biggl(\frac{4K(3K-2)}{(K-1)^2}\biggr)\frac{|C|}{2}\Biggl) \]
  which, by \Propref{pqna}, implies
  \begin{align*}
    &\frac{1}{K}\Biggl(\frac{|C|}{2} +
      (1-\xi)\biggl(\frac{4K(3K-2)}{(K-1)^2}\biggr)\frac{|C|}{2}\Biggl) \\
    &\ge \frac{|C|}{2} - (1-\xi)\biggl(\frac{4K(3K-2)}{(K-1)^2}\biggr)\frac{|C|}{2} - (1-\xi)\frac{|C|}{2}
  \end{align*}
  which is equivalent to
  $1 - \xi \ge \frac{(K-1)^3}{K(13K^2 + 2K - 7)}$.  But
  \[ 1 - \xi < \frac{(K-1)^3}{13K^2(K+1)} =
    \frac{(K-1)^3}{K(13K^2+13K)} < \frac{(K-1)^3}{K(13K^2+2K - 7)} \]
  so we have a contradiction.  Therefore we have proved that
  $Q_i \subset \pc{0,i}$ for $1 < i < n$.

  Then the $Q_i$ are all pairwise disjoint in $C$ and
  $C \setminus \bigl(\bigcap_i\pc{i}\bigr) = \bigcup_iQ_i$ so
  $C_n$ is obtained from $C$ by replacing $Q_i \subset C$ with $R_i$,
  for each $i$.  Then since $f_n$ is $K$-bilipschitz and by
  \peqnref{sumbound} and \peqnref{xibound}, we have
  \begin{align*}
    \theta \ge |C_n| \ge |C| - \sum_i|Q_i|
    &\ge |C| - (1-\xi)\biggl(\frac{K(3K-2)}{(K-1)^2}\biggr)\frac{|C|}{2} \\
    & > |C| - \bigg(\frac{(K-1)^2}{4K(3K-2)}\biggr)\biggl(\frac{K(3K-2)}{(K-1)^2}\biggr)\frac{|C|}{2} \\
    &= \frac{7}{8}|C|
  \end{align*}
  So $|C| < \frac{8}{7}\theta$ and we see that $\frac{8}{7}\theta$
  bounds the lengths of $\xi$-almost isometric cycles of $\Gamma$.
\end{proof}

\begin{prop}
  \proplabel{subdiv} Let $\Gamma$ be a (strongly) shortcut graph.
  Then the graph obtained from $\Gamma$ by subdividing each edge is
  (strongly) shortcut.
\end{prop}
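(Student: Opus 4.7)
The plan is to exploit the natural metric subdivision embedding $\sigma_\Gamma\colon \Gamma \to \Gamma'$ which sends each edge of $\Gamma$ homeomorphically onto the corresponding length-two path in $\Gamma'$, doubling all distances. Note that $\Gamma'$ is bipartite (with parts $V(\Gamma)$ and the set $M$ of newly inserted midpoint vertices), so every cycle in $\Gamma'$ has even length and its vertices alternate between preimages of $V(\Gamma)$ and preimages of $M$.

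The main step is to show that any cycle $f'\colon C' \to \Gamma'$ which is either isometric or $K$-bilipschitz for some $K \ge 1$ descends to a cycle $f\colon C \to \Gamma$ with $|C'| = 2|C|$ having the same property. First, $f'$ admits no ``backtracks'': if two consecutive edges of $C'$ incident to some vertex $v'$ both mapped to the same edge of $\Gamma'$, then the two neighbors $u', w' \in V(C')$ of $v'$ would satisfy $f'(u') = f'(w')$ while $d_{C'}(u', w') = 2$, contradicting the $K$-bilipschitz (in particular isometric) condition for any finite $K$. Combined with bipartiteness, this lets one view $C'$ as the subdivision of a cycle $C$ of length $|C'|/2$ whose vertices are the $V(\Gamma)$-preimages in $V(C')$, and smoothing each $M$-preimage (whose two adjacent edges of $C'$ map under $f'$ to the two halves of a single edge of $\Gamma$) yields a nondegenerate combinatorial map $f\colon C \to \Gamma$. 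Writing $\sigma_C\colon C \to C'$ for the analogous metric subdivision embedding, one has the identity $f' \circ \sigma_C = \sigma_\Gamma \circ f$.

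Because $\sigma_C$ and $\sigma_\Gamma$ are surjective and double all distances, this commuting identity gives
\[ d_\Gamma\bigl(f(p), f(q)\bigr) = \tfrac{1}{2} d_{\Gamma'}\bigl(f'(\sigma_C(p)), f'(\sigma_C(q))\bigr) \quad \text{and} \quad d_C(p, q) = \tfrac{1}{2} d_{C'}\bigl(\sigma_C(p), \sigma_C(q)\bigr) \]
for all $p, q \in C$, so $f$ is isometric (respectively $K$-bilipschitz) whenever $f'$ is. Hence, if $\theta$ bounds the lengths of isometric (resp.\ $K$-bilipschitz) cycles of $\Gamma$, then $2\theta$ bounds the lengths of such cycles in $\Gamma'$. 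This handles the shortcut case directly, and the strongly shortcut case follows by passing through the $K$-bilipschitz characterization on both sides via \Propref{bilip}.

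I expect the main obstacle to be the bookkeeping around the descent itself --- verifying that the no-backtracks observation together with bipartiteness really does exhibit $C'$ as a subdivision of a well-defined cycle $C$ on which $f$ is nondegenerate, and that the scaling identity between $f$ and $f'$ holds on all of $C$ (not merely on vertices). Once this setup is in place, no genuine metric estimate is needed.
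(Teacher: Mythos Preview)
Your proposal is correct and follows essentially the same approach as the paper: observe that $\Gamma'$ is metrically just $\Gamma$ rescaled by $2$, show that isometric (resp.\ $K$-bilipschitz) cycles in $\Gamma'$ have no backtracks and hence descend to cycles of the same type in $\Gamma$ of half the length, and invoke \Propref{bilip} for the strongly shortcut case. The paper's proof is terser (it simply asserts that isometric cycles are embedded and hence are subdivisions of isometric cycles of $\Gamma$, and says ``similarly'' for $K$-bilipschitz cycles), but your more explicit treatment of bipartiteness and the descent construction is a faithful unpacking of the same argument.
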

\begin{proof}
  Let $\Gamma'$ be the barycentric subdivision of $\Gamma$.  Then
  $\Gamma'$ is isometric to $\Gamma$ after scaling the metric by a
  factor of $2$.  Since isometric cycles are embedded, they have no
  backtracks and so every isometric cycle of $\Gamma'$ is the
  subdivision of an isometric cycle of $\Gamma$.  Hence, if $\theta$
  bounds the lengths of the isometric cycles of $\Gamma$ then
  $2\theta$ bounds the lengths of the isometric cycles of $\Gamma'$.
  So if $\Gamma$ is shortcut then $\Gamma'$ is shortcut.  Similary, if
  there is a bound on the lengths of the $K$-bilipschitz cycles of
  $\Gamma$ then there is a bound on the lengths of the $K$-bilipschitz
  cycles of $\Gamma'$.  So, by \Propref{bilip}, if $\Gamma$ is
  strongly shortcut then $\Gamma'$ is strongly shortcut.
\end{proof}

\begin{prop}
  \proplabel{free} Let $G$ be a (strongly) shortcut group.  Then $G$ acts
  freely and cocompactly on a (strongly) shortcut graph.
\end{prop}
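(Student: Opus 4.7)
The plan is to perform a ``blow-up'' of $\Gamma$ that replaces each vertex orbit with a free $G$-orbit, eliminating all vertex stabilizers while preserving the (strongly) shortcut property.

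After applying \Propref{subdiv} if needed to remove edge inversions, I would fix orbit representatives $v_1, \ldots, v_n$ in $V(\Gamma)$ with finite stabilizers $H_i = \operatorname{Stab}(v_i)$ (finite by properness, finite in number by cocompactness). Define $V(\Gamma') = \bigsqcup_{i=1}^n G$, with $G$ acting by left multiplication on each copy, hence freely. The $G$-equivariant surjection $p \colon V(\Gamma') \to V(\Gamma)$ sending $g$ in the $i$-th copy to $g v_i$ has fibers of size $|H_i|$. For edges: for each $G$-orbit of edges of $\Gamma$ with representative $\{v_i, g v_j\}$, I would add the edges $\{h, h g\}$ (between the $i$-th and $j$-th copies) for all $h \in G$, giving one free $G$-orbit of $\Gamma'$-edges per edge orbit of $\Gamma$. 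Then the $G$-action on $\Gamma'$ is free (vertex orbits are free $G$-sets) and cocompact ($\Gamma'/G$ has the same finite number of vertex and edge orbits as $\Gamma/G$). Connectedness of $\Gamma'$ follows from the fact that the edge ``shifts'' $g$ generate $G$, a consequence of the connectedness of $\Gamma$ and cocompactness of the action.

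The main obstacle is to show that $\Gamma'$ inherits the (strongly) shortcut property from $\Gamma$. The combinatorial map $p \colon \Gamma' \to \Gamma$ is $G$-equivariant, $1$-Lipschitz, and has uniformly bounded fibers; since $\Gamma$ has no loops, distinct vertices in a fiber of $p$ lie at $\Gamma'$-distance at least $2$, so $p$ is not an isometry. Nonetheless, I expect $p$ to be a $(1, A)$-quasi-isometry with $A$ depending only on $\max_i |H_i|$. My plan is then, given a long almost isometric cycle $C \to \Gamma'$, to push it forward via $p$ to a closed walk in $\Gamma$ of the same length; after removing backtracks (which may appear since $p$ is not injective on edges) and adjusting bilipschitz constants to account for the quasi-isometric distortion, one extracts a long almost isometric cycle in $\Gamma$, contradicting its shortcut property. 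The delicate technical step is the quantitative tracking of bilipschitz constants through the finite fibers of $p$: since $p$ identifies sets of diameter up to $A$ in $\Gamma'$, the strong shortcut constant for $\Gamma'$ must be chosen slightly closer to $1$ than that for $\Gamma$ to absorb this distortion, and the bound on $|C|$ must exceed a threshold in terms of $A$ before the projection can be treated as almost isometric.
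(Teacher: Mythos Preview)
Your overall strategy---blow up $\Gamma$ to obtain a free $G$-action and then push almost isometric cycles down to $\Gamma$---matches the paper's. The gap is in your edge set: by taking only one $G$-orbit of edges in $\Gamma'$ per edge orbit of $\Gamma$, you lose connectedness in general, and your claim that the shifts generate $G$ is not enough to prevent this. Take $G=\Z/2=\{e,s\}$ acting on the path $a\text{--}b\text{--}c$ by swapping $a$ and $c$; with $v_1=a$, $v_2=b$ and the single edge-orbit representative $\{v_1,gv_2\}$ (for either choice $g=e$ or $g=s$), your $\Gamma'$ has four vertices and two edges and is a disjoint union of two edges. Even when the shift $g=s$ generates $G$, the graph remains disconnected. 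The underlying problem is that a vertex with nontrivial stabilizer has several preimages in $\Gamma'$ which do not all see the same neighbours, so paths in $\Gamma$ cannot in general be lifted, and your projection $p$ need not be a quasi-isometry (indeed, need not even have connected domain).

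The paper's construction avoids this by using a much richer edge set: it takes the induced subgraph of $G\times\Gamma^0$ (where $(g,u)\sim(h,v)$ for \emph{every} edge $u\sim v$ of $\Gamma$, regardless of $g,h$) on the vertex set $\{(g,gv_i):g\in G,\,i\}$. The key consequence is that any path of positive length in $\Gamma$ lifts to $\hat\Gamma$ between \emph{arbitrary} preimages of its endpoints, so that $d_{\hat\Gamma}(x,y)\le\max\bigl\{d_\Gamma(\hat\pi(x),\hat\pi(y)),\,2\bigr\}$. This makes the shortcut argument immediate: project a long $\hat\xi$-almost isometric cycle, apply \Propref{aadist} in $\Gamma$ to find near-antipodal vertices whose images are close, and lift a short path back---no backtrack removal or quasi-isometry bookkeeping is needed. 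If you enlarge your edge set to match the paper's, your outline goes through and in fact simplifies.
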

\begin{proof}
  Let $G$ act properly and cocompactly on a (strongly) shortcut graph
  $\Gamma$.  If $\Gamma$ has a single vertex then $G$ is finite and so
  acts freely on any Cayley graph of $G$ which is strongly shortcut
  because it is connected and finite.  So we may assume that $\Gamma$
  has more than one vertex.
  
  By \Propref{subdiv}, we may assume that $G$ acts on $\Gamma$ without
  edge inversions.  Let $\pi \colon G \times \Gamma^0 \to \Gamma^0$ be
  the projection onto the second factor.  Define a graph
  $\tilde \Gamma$ on the vertex set $G \times \Gamma^0$ where
  $\tilde \Gamma$ has an edge joining $(g,v)$ and $(g',v')$ for each
  edge joining $v$ and $v'$.  Then the diagonal action
  $G \actson G \times \Gamma^0$ given by \[ g\cdot(g',v) = (gg',gv) \]
  extends to $\tilde \Gamma$ and the projection $\pi$ extends to a
  $G$-equivariant nondegenerate combinatorial map
  $\pi \colon \tilde \Gamma \to \Gamma$.  That $G$ acts on $\Gamma$
  without edge inversions rules out nontrivial fixed points of
  midpoints of edges of $\tilde \Gamma$ and so the action of $G$ on
  $\tilde \Gamma$ is free.  Let $\{v_1, v_2, \ldots, v_k\}$ be a set
  of orbit representatives of $G \actson \Gamma^0$.  Let $\hat \Gamma$
  be the induced subgraph of $\tilde \Gamma$ on
  $\bigcup_{g \in G}\bigl\{(g,gv_i)\bigr\}_i$.  We will prove that
  $\hat \Gamma$ is (strongly) shortcut and that the action of $G$ on
  $\hat \Gamma$ is cocompact.

  Let $\hat \pi\colon \hat \Gamma \to \Gamma$ be the restriction of
  $\pi$ to $\hat \Gamma$.  Since $G$ acts properly on $\Gamma$, the
  preimage under $\hat \pi$ of a vertex of $\Gamma$ is finite and so
  $\hat \Gamma$ is locally finite.  Also, the vertex set of
  $\hat \Gamma$ is the union of finitely many orbits
  $\bigcup_{i=1}^kG\cdot(1,v_i)$ so the action of $G$ on
  $\tilde \Gamma$ is cocompact.  It remains to prove that
  $\hat \Gamma$ is (strongly) shortcut.

  For a vertex $v \in \Gamma^0$, we have $v = gv_i$ for some $i$ and
  so $\hat\pi(g,gv_i) = v$.  Moreover, since $\hat \Gamma$ is an
  induced subgraph of $\tilde \Gamma$, for each pair of vertices
  $(g,u),(h,v) \in \hat\Gamma^0$, we see that $\hat \pi$ induces a
  bijection between the set of edges between $(g,u)$ and $(h,v)$ and
  the set of edges between $u$ and $v$.  This implies that for any
  $(g,u),(h,v) \in \hat\Gamma^0$, we can lift any path of nonzero
  length $\alpha \colon P \to \Gamma$ between $u$ and $v$ to a path
  $\hat \alpha \colon P \to \hat \Gamma$ from $(g,u)$ to $(h,v)$.  The
  lift is not unique since, if the sequence of vertices visited by
  $\alpha$ is $(u = u_0, u_1, u_2, \ldots, u_k = v)$ then, for
  $0 < i < k$, the lift of $u_i$ in $\hat \alpha$ may be any
  $(g,u_i) \in \hat \pi^{-1}(u_i)$.

  If $\Gamma$ is strongly shortcut then let $\theta \ge 3$ bound the
  lengths of the $\bar \xi$-almost isometric cycles of
  $\Gamma$. Otherwise, let $\theta \ge 3$ bound the lengths of the
  isometric cycles of $\Gamma$ and set $\bar \xi = 1$.  Let
  $\hat\xi = \frac{1+\bar\xi}{2}$ and let $f \colon C \to \hat \Gamma$
  be a ($\hat\xi$-almost) isometric a cycle of length $|C| > \theta$.
  By \Propref{aadist},
  \[ d_{\Gamma}\bigl(\hat\pi\comp f(u), \hat\pi \comp f(v)\bigr) <
    \bar \xi d_C(u,v) \] for some $u,v \in C^0$ with
  $d_C(u,v) \ge \Bigl\lfloor\frac{|C|}{2}\Bigr\rfloor - 1$.  If
  $d_{\Gamma}\bigl(\hat\pi\comp f(u), \hat\pi \comp f(v)\bigr) > 0$
  then let $\alpha \colon P \to \Gamma$ be a geodesic from
  $\hat\pi \comp f(u)$ to $\hat\pi \comp f(v)$.  Otherwise, let
  $\alpha \colon P \to \Gamma$ be a path of length $2$ from
  $\hat\pi \comp f(u)$ to $\hat\pi \comp f(v) = \hat\pi \comp f(u)$.
  This is always possible since $\Gamma$ is a connected graph on more
  than one vertex.  By the previous paragraph, we may lift $\alpha$ to
  a path $\hat \alpha \colon P \to \hat \Gamma$ from $f(u)$ to $f(v)$.
  So we see that either
  \[ d_{\hat \Gamma}\bigl(f(u), f(v)\bigr) < \bar \xi d_C(u,v) \] or
  \[ d_{\hat \Gamma}\bigl(f(u), f(v)\bigr) \le 2 \] and so, by
  \Propref{pqna}, one of
  \begin{equation}
    \eqnlabel{ineq}
    d_C(u,v) - (1 - \hat\xi)\frac{|C|}{2} < \bar\xi d_C(u,v)
  \end{equation}
  or
  \begin{equation}
    \eqnlabel{oineq}
    d_C(u,v) - (1- \hat\xi)\frac{|C|}{2} \le 2
  \end{equation}
  must hold.  Since $d_C(u,v) \ge \frac{|C|}{2} - \frac{3}{2}$ we see
  that \peqnref{oineq} gives the bound $|C| \le \frac{7}{\hat\xi}$.
  On the other hand, \peqnref{ineq} is equivalent to
  \[ (1-\bar\xi)d_C(u,v) < (1 - \hat\xi)\frac{|C|}{2} \] and so
  gives
  \[ (\hat\xi-\bar\xi)\frac{|C|}{2} < (1-\bar\xi)\frac{3}{2} \] which
  is impossible if $\bar\xi = 1$ and otherwise gives the bound
  $|C| \le \frac{3(1-\bar\xi)}{\hat\xi - \bar\xi}$.
\end{proof}

\section{Filling properties and disk diagrams}
\seclabel{fill}

In this section we study disk diagrams and the isoperimetric and
isodiametric functions of (strongly) shortcut graphs and groups.  Let
$\Gamma$ be a graph and let $\theta \in \N$.  For the purposes of the
current discussion, a cycle $C$ is always based and oriented.  Hence
two cycles $f_1,f_2 \colon C \to \Gamma$ may be distinct even if
$f_1 = f_2 \comp \psi$ for some $\psi \in \Aut(C)$.  Let
\[ S_{\theta} = \bigl\{f \colon C \to \Gamma \sth |C| \le \theta
  \bigr\} \] be the set of cycles in $\Gamma$ of length less than or
equal to $\theta$.  The \defterm{$\theta$-filling}
$F_{\theta}(\Gamma)$ is the $2$-complex whose $1$-skeleton is $\Gamma$
and whose $2$-skeleton has a unique $2$-cell with attaching map
$f \colon C \to \Gamma$ for each $f \in S_{\theta}$.  If a group $G$
acts on $\Gamma$ then $G$ acts on $S_{\theta}$ by
$g \cdot f = \phi_g \comp f$ where $\phi_g \in \Aut(\Gamma)$ is the
automorphism by which $g$ acts on $\Gamma$.  Thus the action of $G$ on
$\Gamma$ extends to an action on $F_{\theta}(\Gamma)$ such that an
element $g \in G$ stabilizes a $2$-cell $F$ if and only if $g$
stabilizes $F$ pointwise.

For $\theta,N \in \N$ with $3 \le \theta \le N$ and $\xi \in (0,1)$
consider the following property.
\begin{equation}
  \eqnlabel{A}
  \text{Every cycle $C \to \Gamma$ with $\theta < |C| \le N$ is not
    $\xi$-almost isometric.}
\end{equation}

\begin{rmk}
  \rmklabel{A} If $\Gamma$ is shortcut then $\Gamma$
  satisfies \peqnref{A} for $\theta$ bounding the lengths of isometric
  cycles of $\Gamma$, for any $N \ge \theta$ and for
  $\xi \in \bigl(\frac{N - 2}{N},1\bigr)$.  Of course, if $\Gamma$ is
  strongly shortcut, then it satisfies \peqnref{A} for a fixed $\xi$
  not depending on $N$.
\end{rmk}

\begin{cons}
  \conslabel{diskdiag} Let $\Gamma$ be a graph satisfying \peqnref{A}.
  Given a cycle $f \colon C \to \Gamma$ of length $|C| \le N$ we will
  inductively construct a disk diagram $D_f \to F_{\theta}(\Gamma)$
  for $f$.  If $|C| \le \theta$ then $D_f \to F_{\theta}(\Gamma)$ is
  just a single $2$-cell mapping to the $2$-cell of
  $F_{\theta}(\Gamma)$ whose attaching map is isomorphic to $f$.
  Otherwise $f$ is not $\xi$-almost isometric and so, by
  \Propref{aadist},
  \[ d_{\Gamma}\bigl(f(u),f(v)\bigr) < \xi d_C(u,v) \] for some pair
  of vertices $u,v \in C^0$ with
  $d_C(u,v) \ge \Bigl\lfloor\frac{|C|}{2}\Bigr\rfloor - 1$.  Let $P$
  and $Q$ be the two segments of $C$ joining $u$ and $v$.  Let
  $g \colon R \to \Gamma$ be a geodesic path from $f(u)$ to $f(v)$ in
  $\Gamma$ and note that $|R| < \xi\min\bigl\{|P|,|Q|\bigr\}$.  Glue
  $f$ and $g$ together along $u \sim g^{-1}\bigl(f(u)\bigr)$ and
  $v \sim g^{-1}\bigl(f(v)\bigr)$ to obtain a combinatorial map
  $h \colon (C \sqcup R)/{\sim} \to \Gamma$ with $h|_{P \cup R}$ and
  $h|_{Q \cup R}$ cycles of length \[ |P|+|R| < |P|+ \xi |Q| < |C| \] and
  \[ |Q|+|R| < |Q|+ \xi |P| < |C| \] and so, by induction we have disc
  diagrams $D_{h|_{P \cup R}} \to F_{\theta}(\Gamma)$ and
  $D_{h|_{Q \cup R}} \to F_{\theta}(\Gamma)$ for $h|_{P \cup R}$ and
  $h|_{Q \cup R}$.  Gluing $D_{h|_{P \cup R}} \to F_{\theta}(\Gamma)$
  and $D_{h|_{Q \cup R}} \to F_{\theta}(\Gamma)$ together along $R$ we
  obtain a disk diagram $D_f \to F_{\theta}(\Gamma)$ for $f$.
\end{cons}

\subsection{Simple connectedness}

\begin{thm}
  \thmlabel{ftsc} Let $\Gamma$ be a shortcut graph and let
  $\theta \ge 3$ bound the lengths of the isometric cycles of
  $\Gamma$.  Then $F_{\theta}(\Gamma)$ is simply connected.
\end{thm}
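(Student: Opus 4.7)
The plan is to exhibit a disk diagram $D_f \to F_{\theta}(\Gamma)$ for every cycle $f \colon C \to \Gamma$ by invoking \Consref{diskdiag}, and then conclude simple connectedness in the standard way.

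Since $F_{\theta}(\Gamma)$ has connected $1$-skeleton $\Gamma$, it is path-connected, so it suffices to show that every combinatorial edge loop in $\Gamma$ is null-homotopic in $F_{\theta}(\Gamma)$. After collapsing backtracks, such a loop is either the constant loop at a vertex (trivially null-homotopic) or a cycle $f \colon C \to \Gamma$ in the sense of \Secref{defs}. So fix such a cycle $f$.

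The key step is to produce the diagram $D_f$. Set $N = \max\{|C|, \theta\}$ and pick any $\xi \in \bigl(\tfrac{N-2}{N}, 1\bigr)$. Because $\theta$ bounds the lengths of isometric cycles of $\Gamma$, \Rmkref{A} guarantees that $\Gamma$ satisfies property \peqnref{A} for these choices of $\theta$, $N$, and $\xi$. Applying \Consref{diskdiag} to $f$ then yields a disk diagram $D_f \to F_{\theta}(\Gamma)$, which shows that $f$ is null-homotopic in $F_{\theta}(\Gamma)$. Since $f$ was arbitrary, $F_{\theta}(\Gamma)$ is simply connected.

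There is no real obstacle in this argument; the substance of the theorem is entirely contained in \Consref{diskdiag} and \Rmkref{A}, and the proof is simply the observation that these apply. The only mildly subtle point is that the constants $N$ and $\xi$ are chosen in terms of the given cycle $f$: a shortcut graph need not satisfy \peqnref{A} with a single $\xi < 1$ uniformly in $N$ (that strengthening is precisely the strong shortcut condition), but the weaker, $N$-dependent form supplied by \Rmkref{A} is all the inductive construction requires.
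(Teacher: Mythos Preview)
Your proof is correct and follows essentially the same approach as the paper: invoke \Rmkref{A} with $N$ chosen in terms of the given cycle and then apply \Consref{diskdiag} to obtain a disk diagram. The paper's version is terser (it simply takes $N = |C|$ and $\xi = \tfrac{N-1}{N}$ without the preliminary remarks on edge loops), but the substance is identical.
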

\begin{proof}
  Let $f \colon C \to \Gamma$ be a cycle in $\Gamma$.  Then, by
  \Rmkref{A} $\Gamma$ satisfies \peqnref{A} for $\theta$ as
  given, for $N=|C|$ and for $\xi = \frac{N - 1}{N}$.  Hence, we may
  apply \Consref{diskdiag} to obtain a disk diagram for $f$.
\end{proof}

\begin{cor}
  \corlabel{fundgroup} Let $G$ be a (strongly) shortcut group.  Then
  there is a compact $2$-complex $X$ with $G = \pi_1(X)$ such that the
  universal cover $\univcov{X}$ of $X$ has (strongly) shortcut
  $1$-skeleton.  In particular, the group $G$ is finitely presented.
\end{cor}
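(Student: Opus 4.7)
The plan is to assemble the pieces already established: use \Propref{free} to pass to a free cocompact action on a (strongly) shortcut graph $\Gamma$, let $X$ be the quotient of the $\theta$-filling $F_{\theta}(\Gamma)$ by this $G$-action, and invoke \Thmref{ftsc} to see that the cover is simply connected.

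First, apply \Propref{free} to obtain a (strongly) shortcut graph $\Gamma$ on which $G$ acts freely and cocompactly, and let $\theta \ge 3$ bound the lengths of the isometric cycles of $\Gamma$. Since $G$ acts by cellular automorphisms, the induced action on $S_{\theta}$ makes the $G$-action extend naturally to an action on $F_{\theta}(\Gamma)$, as described preceding \peqnref{A}.

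Next, I would verify that this extended action is free and cocompact. Freeness on the $1$-skeleton is immediate from the freeness of the action on $\Gamma$. For any $2$-cell $F$, an element $g \in G$ stabilizing $F$ does so pointwise (by the observation preceding \peqnref{A}), and in particular fixes a vertex on $\partial F$; freeness on $\Gamma$ then forces $g = 1$. For cocompactness, note that a proper cocompact action on a graph forces the graph to be locally finite (since the quotient is a finite graph and the quotient map is a covering), so only finitely many cycles of length at most $\theta$ pass through each vertex; combining this with finitely many $G$-orbits of vertices yields finitely many $G$-orbits of $2$-cells in $F_{\theta}(\Gamma)$.

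Finally, set $X = F_{\theta}(\Gamma)/G$. It is a compact $2$-complex because its $1$-skeleton $\Gamma/G$ is finite and there are finitely many orbits of $2$-cells. By \Thmref{ftsc}, $F_{\theta}(\Gamma)$ is simply connected, so it is the universal cover $\univcov{X}$ of $X$, and hence $\pi_1(X) = G$. Its $1$-skeleton is $\Gamma$, which is (strongly) shortcut by construction. Since $X$ has only finitely many cells of dimension at most two, a finite presentation of $G$ can be read off by choosing a spanning tree, taking generators from the remaining edges and relators from the attaching maps of the $2$-cells; thus $G$ is finitely presented. The only real subtlety is the verification of freeness and cocompactness of the induced action on the filling, which reduces to the pointwise stabilization of $2$-cells and to the local finiteness of $\Gamma$.
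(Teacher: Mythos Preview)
Your proposal is correct and follows essentially the same approach as the paper: apply \Propref{free} to get a free cocompact action on a (strongly) shortcut graph $\Gamma$, form the $\theta$-filling $F_{\theta}(\Gamma)$, observe that the $G$-action extends freely and cocompactly to it, and take $X = F_{\theta}(\Gamma)/G$ with \Thmref{ftsc} supplying simple connectedness. The paper's proof is considerably more terse and does not spell out the verification of freeness and cocompactness on the $2$-cells, whereas you have correctly filled in those details.
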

\begin{proof}
  By \Propref{free}, there is a free and cocompact action of $G$ on a
  (strongly) shortcut graph $\Gamma$.  In particular, the graph
  $\Gamma$ is shortcut so let $\theta \ge 3$ bound the lengths of the
  isometric cycles of $\Gamma$ and let
  \[ \mathscr{C} = \bigl\{f \colon (\vec{C},v) \to \Gamma \sth
    |\vec{C}| \le \theta \bigr\} \] be the set of all based oriented
  cycles of length at most $\theta$ in $\Gamma$.  Then $G$ acts on
  $\mathscr{C}$ so $G$ acts on the $2$-complex $\univcov{X}$ obtained
  from $\Gamma$ by gluing in a $2$-cell along each
  $f \in \mathscr{C}$.  The $2$-complex $\univcov{X}$ is a
  supercomplex of the $\theta$-filling $F_{\theta}(\Gamma)$ and
  $X^1 = F_{\theta}(\Gamma)^1 = \Gamma$ so, by \Thmref{ftsc}, the
  $2$-complex $\univcov{X}$ is simply connected.  The $G$-action on
  $\univcov{X}$ is free since if some $g \in G$ stabilizes a $2$-cell
  then it must fix its boundary so, by the freeness of the action on
  $\univcov{X}^1 = \Gamma$, we have $g = 1$.
\end{proof}

\subsection{Isoperimetric function}

\begin{thm}
  \thmlabel{isoper} Let $\Gamma$ be a graph.  If $\Gamma$ is shortcut
  then, for $\theta$ large enough, the Dehn function $\Delta$ of the
  filling $F_{\theta}(\Gamma)$ satisfies $\Delta(n) \le 2^n$.  If
  $\Gamma$ is strongly shortcut for $\xi \in (0,1)$ and
  $r > \frac{1}{1-\log_2(1+\xi)}$ then, for $\theta$ large enough, the
  Dehn function $\Delta$ of the filling $F_{\theta}(\Gamma)$ satisfies
  $\Delta(n) \le n^r$.
\end{thm}
\begin{proof}
  Suppose $\Gamma$ is shortcut and let $\theta \ge 3$ bound the
  lengths of the isometric cycles of $\Gamma$.  Let
  $\Delta \colon \N \to \N$ be the Dehn function of
  $F_{\theta}(\Gamma)$.  We will prove, by induction on $n$ that
  $\Delta(n) \le 2^n$.  If $n \le \theta$ then this clearly holds
  since any cycle of length at most $\theta$ bounds a $2$-cell in
  $F_{\theta}(\Gamma)$.  Let $f \colon C \to \Gamma$ be a cycle of
  length $n > \theta$.  Applying \Consref{diskdiag} to $f$ with $N=n$
  and $\xi = \frac{n-1}{n}$ we see that $f$ bounds a disk diagram
  $D_f$ which is the union of two disk diagrams of boundary length
  less than $n$.  Hence $f$ bounds a disk of area at most
  $2\Delta(n-1)$.  By induction
  \[ 2\Delta(n-1) \le 2\cdot 2^{n-1} = 2^n \] and so we have
  $\Delta(n) \le 2^n$.
  
  Suppose $\Gamma$ is strongly shortcut.  Choose $L \in \N$ with
  $L > 3$.  Let $\theta \ge \frac{L}{1-\xi}$ bound the lengths of the
  $\xi$-almost isometric cycles of $\Gamma$.  We will prove that the
  Dehn function of $F_{\theta}(\Gamma)$ satisfies
  $\Delta(n) \le n^{\log_b(2)}$ for $b = \frac{2L}{(L-3)\xi + L + 3}$.
  Note that $b > 1$ and that $b$ tends to $\frac{2}{1+\xi}$ as $L$
  goes to infinity so that $\log_{b}(2)$ tends to
  $\frac{\log_2(2)}{\log_2(\frac{2}{1+\xi})} = \frac{1}{1 -
    \log_2(1+\xi)}$.  So if $r > \frac{1}{1-\log_2(1+\xi)}$ we may
  choose $L$ large enough that $r > \log_b(2)$.  Going forward we
  assume that we have much such a choice of $L$.

  The argument proceeds as in the shortcut case but in the inductive
  step $f$ bounds a disk diagram which is the union of two disk
  diagrams of boundary length strictly less than
  \[ \xi\frac{n}{2} + \Bigl\lceil \frac{n}{2} \Bigr\rceil + 1 \le
    \xi\frac{n}{2} + \frac{n}{2} + \frac{1}{2} + 1 =
    \frac{1}{2}\Bigl(\xi + 1 + \frac{3}{n}\Bigr)n \le
    \frac{1}{2}\Bigl(\xi + 1 + \frac{3}{\theta}\Bigr)n \] so, since
  $\theta \ge \frac{L}{1-\xi}$ we have a disk diagram for $f$ of area
  at most
  \[ 2\Delta\Bigl(\Bigl\lfloor\frac{1}{2}\Bigl(\xi + 1 +
    \frac{3(1-\xi)}{L}\Bigr)n\Bigr\rfloor\Bigr) =
    2\Delta\Bigl(\Bigl\lfloor\frac{1}{2L}\Bigl((L-3)\xi +
    L+3\Bigr)n\Bigr\rfloor\Bigr) =
    2\Delta\Bigl(\Bigl\lfloor\frac{1}{b}n\Bigr\rfloor\Bigr)\] and so
  by induction we have
  \[ 2\Delta\Big(\Bigl\lfloor\frac{1}{b}n\Bigr\rfloor\Big) \le
    2\Big(\Bigl\lfloor\frac{1}{b}n\Bigr\rfloor\Big)^{\log_b(2)} \le
    2\Bigl(\frac{1}{b}n\Bigr)^{\log_b(2)} =
    2\Bigl(\frac{1}{2}n^{\log_b(2)}\Bigr) = n^{\log_b(2)} \] and so we
  have that $\Delta(n) \le n^{\log_b(2)} < n^r$.
\end{proof}

\begin{cor}
  \corlabel{isoper} Let $G$ be a group.  If $G$ is shortcut then it
  has an exponential isoperimetric function.  If $G$ is strongly
  shortcut then it has a polynomial isoperimetric function.
\end{cor}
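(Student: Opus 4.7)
The plan is to transfer the isoperimetric bounds of Theorem \thmref{isoper} from the filling $F_{\theta}(\Gamma)$ to the group $G$ by using Corollary \corref{fundgroup} to realize $F_\theta(\Gamma)$ as the universal cover of a compact $2$-complex with fundamental group $G$, then invoking the standard invariance of Dehn functions across compact $2$-complexes with fundamental group $G$.

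First, by Corollary \corref{fundgroup} (and by inspecting its proof), a (strongly) shortcut group $G$ acts freely and cocompactly on a (strongly) shortcut graph $\Gamma$, and for $\theta$ bounding the lengths of the isometric cycles of $\Gamma$ the filling $\widetilde{X} := F_{\theta}(\Gamma)$ is a simply connected free $G$-space. Hence $X := \widetilde{X}/G$ is a compact $2$-complex with $\pi_1(X) = G$ and universal cover $\widetilde{X}$. Applying Theorem \thmref{isoper} to $\Gamma$, possibly after replacing $\theta$ by a larger value in order to meet the hypothesis of the theorem's strongly shortcut case, gives an exponential isoperimetric function for $\widetilde{X}$ in the shortcut case and a polynomial one in the strongly shortcut case. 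Enlarging $\theta$ only adjoins finitely many $G$-orbits of $2$-cells to $X$, so both the compactness of $X$ and the simple connectivity of $\widetilde{X}$ are preserved.

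The final step is to invoke the well-known fact that the Dehn function of a compact $2$-complex with fundamental group $G$ is equivalent, under the standard equivalence on isoperimetric functions, to the Dehn function arising from any finite presentation of $G$, and that both the class of exponential functions and the class of polynomial functions are closed under this equivalence. The only real obstacle is the modest bookkeeping needed to confirm that a single $\theta$ can be chosen simultaneously compatible with Corollary \corref{fundgroup} and with the enlargement required by Theorem \thmref{isoper}; as noted above this is immediate because enlarging $\theta$ preserves all relevant properties.
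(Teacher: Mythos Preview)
Your proposal is correct and matches the paper's intended (implicit) argument: the paper gives no separate proof of this corollary, treating it as immediate from Theorem \thmref{isoper} together with Corollary \corref{fundgroup} and the standard fact that the Dehn function of a finitely presented group is, up to the usual equivalence, the isoperimetric function of the universal cover of any compact $2$-complex with that fundamental group. Your careful remark about enlarging $\theta$ is exactly the small bookkeeping point one needs to make the implicit argument precise.
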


\begin{cor}
  Let $G$ be a shortcut group.  Then $G$ has a decidable word problem.
\end{cor}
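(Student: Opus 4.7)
The plan is to combine the two results already established in this section: by \Corref{fundgroup} the group $G$ admits a finite presentation $\langle S \mid R \rangle$, and by \Corref{isoper} (the shortcut case) the Dehn function $\Delta_G$ of this presentation satisfies $\Delta_G(n) \le 2^n$. Once we have a finite presentation together with an explicit recursive upper bound on the Dehn function, decidability of the word problem follows by a standard enumeration argument, which I would spell out as follows.

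Given a word $w$ in the generators $S \cup S^{-1}$ of length $n$, we want to decide whether $w =_G 1$. First I would observe that, since $2^n$ is a computable function of $n$, the algorithm need only enumerate all van Kampen diagrams over $\langle S \mid R \rangle$ with at most $2^n$ two-cells, of which there are only finitely many up to isomorphism because $S$ and $R$ are finite. For each such diagram $D$, check mechanically whether the boundary word of $D$ is equal, as a word, to a cyclic conjugate of $w$. If some such diagram is found, output \emph{yes}; if the search exhausts all candidates without success, output \emph{no}.

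Correctness is immediate in both directions: if $w =_G 1$ then by the isoperimetric bound $w$ bounds a van Kampen diagram of area at most $\Delta_G(n) \le 2^n$, which will be encountered by the search; conversely, any word that bounds a van Kampen diagram represents the identity. There is no real obstacle here, as the entire content of the corollary is the passage from a computable isoperimetric bound to decidability, which is classical once finite presentability and the bound are in hand. In fact the same argument with $n^{\log_b(2)}$ replacing $2^n$ shows that strongly shortcut groups have word problem solvable in at worst polynomial-space complexity, though we state only the weaker decidability assertion.
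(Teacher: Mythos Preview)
Your proposal is correct and follows the same approach the paper implicitly takes: the paper states the corollary without proof, treating it as an immediate consequence of finite presentability (\Corref{fundgroup}) together with the recursive isoperimetric bound (\Corref{isoper}), and you have simply made the standard enumeration argument explicit. One small caution: your closing remark about polynomial-space complexity for strongly shortcut groups is not obviously justified by the diagram-enumeration procedure you describe, so you may want to drop or weaken that aside.
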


\begin{cor}
  Let $\Gamma$ be a graph that is strongly shortcut for some
  $\xi \in (0,\sqrt{2} - 1)$.  Then $\Gamma$ is hyperbolic.
\end{cor}
\begin{proof}
  We have
  $\frac{1}{1 - \log_2(1+\xi)} < \frac{1}{1-\log_2(\sqrt{2})} = 2$ so
  we can choose $r < 2$ such that $r > \frac{1}{1 - \log_2(1+\xi)}$.
  Then by \Thmref{isoper}, for $\theta$ large enough, the Dehn
  function $\Delta$ of the filling $F_{\theta}(\Gamma)$ satisfies
  $\Delta(n) \le n^r$ so is subquadratic.  Then, by the isoperimetric
  gap \cite{Gromov:1987, olshanski:subquadratic:1991,
    bowditch:subquadratic:1995}, $\Gamma$ has linear isoperimetric
  function and so is hyperbolic.
\end{proof}

\subsection{Isodiametric function}

\begin{thm}
  \thmlabel{isodiam} Let $\Gamma$ be a graph.  If $\Gamma$ is shortcut
  then, for $\theta$ large enough, the filling $F_{\theta}(\Gamma)$
  has an exponential isodiametric function.  If $\Gamma$ is strongly
  shortcut then, for $\theta$ large enough, the filling
  $F_{\theta}(\Gamma)$ has a polynomial isodiametric function.
\end{thm}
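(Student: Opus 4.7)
The plan is to follow the strategy of \Thmref{isoper} essentially verbatim, but to measure the diameter of the constructed disk diagrams rather than their area. The diagrams produced by \Consref{diskdiag} are built recursively by splitting a cycle $f \colon C \to \Gamma$ along a geodesic $R \to \Gamma$ between a pair of nearly-antipodal vertices, and I will run the same recursion on the isodiametric function.

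The one geometric ingredient I need is the following: if a disk diagram $D_f$ is obtained by gluing two sub-diagrams $D_1$ and $D_2$ along a common path $R$, then $\diam(D_f) \le \diam(D_1) + \diam(D_2)$. This is a one-line triangle-inequality argument: for any two vertices $x \in D_1$ and $y \in D_2$, pick an arbitrary vertex $r \in R$ (which lies in both $D_1$ and $D_2$) and concatenate a path of length at most $\diam(D_1)$ from $x$ to $r$ with one of length at most $\diam(D_2)$ from $r$ to $y$; if both $x$ and $y$ lie in the same $D_i$ the bound is easier. For the base case $|C| \le \theta$, the diagram is a single $2$-cell whose $1$-skeleton is a cycle of length at most $\theta$, so its diameter is at most $\theta/2$.

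Let $\Phi \colon \N \to \N$ denote the isodiametric function of $F_{\theta}(\Gamma)$. In the shortcut case, I would apply \Consref{diskdiag} with $N = n$ and $\xi = (n-1)/n$; each of the two sub-cycles has length at most $n - 1$, so the gluing inequality gives $\Phi(n) \le 2\Phi(n - 1)$, and induction yields an exponential bound. In the strongly shortcut case, I would choose $L > 3$, take $\theta \ge L/(1-\xi)$, and set $b = \frac{2L}{(L-3)\xi + L + 3} > 1$ exactly as in \Thmref{isoper}; each sub-cycle then has length at most $\lfloor n/b \rfloor$, so $\Phi(n) \le 2 \Phi(\lfloor n/b \rfloor)$, which solves to the polynomial bound $\Phi(n) \le n^{\log_b 2}$.

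The only part that requires a genuine check is the geometric observation that gluing disk diagrams along a path adds their diameters; once that is in hand, the inductive machinery of \Thmref{isoper} goes through with ``area'' replaced by ``diameter'' and the identical constants $\theta$ and $b$, so I do not expect any serious obstacle beyond the routine bookkeeping already done for the isoperimetric function.
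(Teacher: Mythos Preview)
Your proposal is correct and matches the paper's approach exactly: the paper's proof is a two-sentence remark that $\diam(f) \le \diam(h|_{P\cup R}) + \diam(h|_{Q\cup R})$ by gluing minimal-diameter disk diagrams along $R$, after which it defers entirely to the argument of \Thmref{isoper}. Your triangle-inequality justification for the gluing bound and your reuse of the constants $\theta$ and $b$ are precisely what the paper has in mind.
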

\begin{proof}
  For a cycle $f \colon C \to \Gamma$ let $\diam(f)$ denote the
  minimum diameter of a disk diagram for $f$.  Observe that in
  \Consref{diskdiag}
  $\diam(f) \le \diam(h|_{P\cup R}) + \diam(h|_{Q\cup R})$.  Indeed we
  may glue together minimal diameter disk diagrams of $h|_{P\cup R}$
  and $h|_{Q\cup R}$ along $R$ to obtain a disk diagram for $f$.
  Using this observation, the proof follows virtually identically to
  that of \Thmref{isoper}.
\end{proof}

\begin{cor}
  Let $G$ be a group.  If $G$ is shortcut then it has an exponential
  isodiametric function.  If $G$ is strongly shortcut then it has a
  polynomial isodiametric function.
\end{cor}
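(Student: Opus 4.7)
The plan is to mirror the deduction of \Corref{isoper} from \Thmref{isoper}. By \Propref{free}, let $G$ act freely and cocompactly on a (strongly) shortcut graph $\Gamma$, and choose $\theta$ large enough that \Thmref{isodiam} applies. Exactly as in the proof of \Corref{fundgroup}, the action of $G$ on $\Gamma$ extends to a free and cocompact action on the filling $F_{\theta}(\Gamma)$, so the quotient $X = F_{\theta}(\Gamma)/G$ is a compact $2$-complex with $\pi_1(X) = G$ whose universal cover is $F_{\theta}(\Gamma)$.

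The next step is to invoke the standard fact that the isodiametric function of a finitely presented group is well-defined up to the asymptotic equivalence that preserves the polynomial and exponential growth classes, and that it can be computed from the isodiametric function of the universal cover of any compact $2$-complex presentation. Applying this to $X$ and quoting \Thmref{isodiam} immediately yields an exponential isodiametric function for $G$ in the shortcut case and a polynomial one in the strongly shortcut case.

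No serious obstacle is expected here, since all of the combinatorial work has already been carried out in \Consref{diskdiag}, \Thmref{isodiam}, \Propref{free}, and \Corref{fundgroup}. The only ingredient beyond quoting these results is the observation that passing from the graph-level bound on $F_{\theta}(\Gamma)$ to a group-theoretic isodiametric bound costs at most a multiplicative constant, because changes of finite presentation (equivalently, of compact $2$-complex presentation) alter diameters of van Kampen diagrams only up to a bounded factor.
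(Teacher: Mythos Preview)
Your proposal is correct and matches the paper's approach: the paper states this corollary without proof, treating it as immediate from \Thmref{isodiam} in the same way that \Corref{isoper} follows from \Thmref{isoper}, and your writeup simply spells out the standard passage from the graph-level bound on $F_{\theta}(\Gamma)$ to the group via \Propref{free} and \Corref{fundgroup}.
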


\section{Combinations}
\seclabel{combin}

In this section we show that (strongly) shortcut graphs and groups are
closed under products and that a finite graph of (strongly) shortcut
groups with finite edge groups is (strongly) shortcut.

\subsection{Products}

Let $\Gamma_1$ and $\Gamma_2$ be simplicial graphs.  The
\defterm{product graph} $\Gamma_1 \times \Gamma_2$ of $\Gamma_1$ and
$\Gamma_2$ is the $1$-skeleton of the CW complex product of $\Gamma_1$
and $\Gamma_2$.  The vertex set of $\Gamma_1 \times \Gamma_2$ is
$\Gamma_1^0 \times \Gamma_2^0$ and the edges of
$\Gamma_1 \times \Gamma_2$ are given by $(u_1,u_2) \sim (v_1,v_2)$
whenever
\[ \text{$u_1 = v_1$ and $u_2 \sim v_2$} \] or
\[ \text{$u_1 \sim v_1$ and $u_2 = v_2$} \] where $\sim$ is the edge
relation.

\begin{thm}
  \thmlabel{prod} Let $\Gamma_1$ and $\Gamma_2$ be (strongly) shortcut
  graphs.  Then $\Gamma_1 \times \Gamma_2$ is (strongly) shortcut.
\end{thm}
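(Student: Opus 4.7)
The plan is to project a cycle $f \colon C \to \Gamma_1 \times \Gamma_2$ onto each factor and bound $|C|$ using the (strong) shortcut properties of $\Gamma_1$ and $\Gamma_2$. Each edge of $C$ maps under $f$ to either a \emph{horizontal} edge (changing only the first coordinate) or a \emph{vertical} edge; let $n_1, n_2$ be the respective counts. If either count is zero then $f$ factors through an inclusion of the other factor and the result is immediate, so assume $n_1, n_2 > 0$. Collapsing the vertical edges of $C$ produces a cycle $C^h$ of length $n_1$ together with a nondegenerate combinatorial cycle $\bar f_1 \colon C^h \to \Gamma_1$, and symmetrically one obtains $\bar f_2 \colon C^v \to \Gamma_2$.

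The key estimate is that if $f$ is $\xi$-almost isometric then
\[ d_{\Gamma_1}\bigl(\bar f_1(\bar p), \bar f_1(\bar q)\bigr) \ge \frac{n_1}{2} - (1-\xi)\frac{|C|}{2} \]
for every antipodal pair $\bar p, \bar q \in C^h$. To see this, choose any lifts $p, q \in C$ of $\bar p, \bar q$. Since $\bar p, \bar q$ are antipodal in $C^h$, each arc of $C$ between $p$ and $q$ contains exactly $n_1/2$ horizontal edges; writing $v_A, v_B \ge 0$ for their vertical edge counts gives $v_A + v_B = n_2$ and $d_C(p, q) = n_1/2 + \min(v_A, v_B)$. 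The restriction of $f_2$ to the arc realizing this minimum is a walk of length $\min(v_A, v_B)$ from $f_2(p)$ to $f_2(q)$ in $\Gamma_2$, so $d_{\Gamma_2}(f_2(p), f_2(q)) \le \min(v_A, v_B)$. Applying \Propref{pqna} to $f$ and subtracting the bound on $d_{\Gamma_2}$ yields the displayed estimate.

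For the plain shortcut case, set $\xi = 1$: every isometric cycle $f$ makes $\bar f_1$ isometric in $\Gamma_1$, so $n_1 \le \theta_1$ by the shortcut property of $\Gamma_1$, and symmetrically $n_2 \le \theta_2$, giving $|C| \le \theta_1 + \theta_2$. For the strongly shortcut case, pick strong shortcut constants $\xi_i, \theta_i$ for $\Gamma_i$, set $\bar\xi = \max(\xi_1, \xi_2)$ and $\xi = (1+\bar\xi)/2$; then whichever of $n_1, n_2$ satisfies $n_i \ge |C|/2$ makes the corresponding $\bar f_i$ into a $\bar\xi$-almost isometric (hence $\xi_i$-almost isometric) cycle in $\Gamma_i$, forcing $n_i \le \theta_i$ and thus $|C| \le 2\max(\theta_1,\theta_2)$. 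The main obstacle is the bookkeeping required to track horizontal and vertical contents of arcs under the collapsing $C \to C^h$, and to verify that the formula $d_C(p, q) = n_1/2 + \min(v_A, v_B)$ is independent of the choice of lifts; once this is unwound the rest is a direct application of \Propref{pqna} together with the definitions.
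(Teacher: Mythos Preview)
Your argument is correct and follows essentially the same approach as the paper: both proofs collapse the minority-type edges to project onto the majority factor and then invoke that factor's (strong) shortcut property. The only organizational difference is that you work in the contrapositive direction---showing directly that if $f$ is $\xi$-almost isometric then $\bar f_i$ is $\bar\xi$-almost isometric---whereas the paper lifts a specific violating antipodal pair from the projection back to $C$; the underlying estimate (using $d_\Gamma = d_{\Gamma_1}+d_{\Gamma_2}$ and \Propref{pqna}) is the same.
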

\begin{proof}
  Let $\Gamma_1$ and $\Gamma_2$ be (strongly) shortcut and let
  $\theta$ bound the lengths of the ($\xi$-almost) isometric cycles of
  the $\Gamma_i$.  Let $\Gamma = \Gamma_1 \times \Gamma_2$ and let
  $f \colon C \to \Gamma$ be a cycle of length $|C| \ge 2\theta$.  We
  combine the shortcut and strongly shortcut cases as follows.  If the
  $\Gamma_i$ are strongly shortcut then we have $\theta$ and $\xi$ as
  given.  Otherwise, by \Rmkref{A}, the $\Gamma_i$ satisfy \peqnref{A}
  for $\theta$ as given, for $N = |C|$ and for some $\xi$ depending on
  $N$.  We will show that for some antipodal pair of points
  $p,q \in C$ we have
  $d_{\Gamma}\bigl(f(p),f(q)\bigr) < \bigl(\frac{1 +
    \xi}{2}\bigr)\frac{|C|}{2}$.

  Each edge of $C$ projects nondegenerately onto exactly one of
  $\Gamma_1$ or $\Gamma_2$.  Call those edges that project
  nondegenerately onto $\Gamma_1$ \defterm{horizontal edges} and those
  that project nondegenerately onto $\Gamma_2$ \defterm{vertical
    edges}.  Without loss of generality the number of horizontal edges
  is greater than or equal to the number of vertical edges.  Let
  $f_1 \colon C_1 \to \Gamma_1$ be the cycle obtained from
  $C \to \Gamma_1$ by contracting the vertical edges of $C$.  Then
  $N \ge |C_1| \ge \frac{|C|}{2} \ge \theta$ so we have
  $d_{\Gamma_1}\bigl(f_1(p_1),f_1(q_1)\bigr) < \xi d_{C_1}(p_1,q_1)$
  for some antipodal pair of points $p_1,q_1 \in C_1$.  Let
  $p',q' \in C$ map to $p_1$ and $q_1$ under the contraction map
  $C \to C_1$.  We may choose $p'$ and $q'$ so that they are not
  contained in the interior of any vertical edge.  Let $\ell$ be the
  number of vertical edges in a geodesic segment of $C$ between $p'$
  and $q'$.  Then $d_C(p',q') = \frac{|C_1|}{2} + \ell$ while
  $d_{\Gamma}\bigl(f(p'),f(q')\bigr) < \xi \frac{|C_1|}{2} + \ell$.
  Let $P \subset C$ be a geodesic segment of length $\frac{|C|}{2}$
  containing $p'$ and $q'$ and let $p$ and $q$ be the endpoints of $P$
  with $p$ nearest to $p'$ and $q$ nearest to $q'$.  Then
  $d_C(p,q) = \frac{|C|}{2}$ while
  \begin{align*}
    d_{\Gamma}\bigl(f(p),f(q)\bigr) & \le
    d_{\Gamma}\bigl(f(p),f(p')\bigr) +
    d_{\Gamma}\bigl(f(p'),f(q')\bigr) +
    d_{\Gamma}\bigl(f(q'),f(q)\bigr) \\
    & \le d_C(p,p') +
    d_{\Gamma}\bigl(f(p'),f(q')\bigr) +
    d_C(q',q) \\
    &= d_C(p,q) - d_C(p',q') +
      d_{\Gamma}\bigl(f(p'),f(q')\bigr) \\
    &< \frac{|C|}{2} -
    \Bigl(\frac{|C_1|}{2} + \ell\Bigr) + \xi \frac{|C_1|}{2} + \ell \\
    &= \frac{|C|}{2} - (1 - \xi) \frac{|C_1|}{2} \\
    &= \Bigl(\frac{1 + \xi}{2}\Bigr)\frac{|C|}{2}
  \end{align*}
  and so
  $d_{\Gamma}\bigl(f(p),f(q)\bigr) < \bigl(\frac{1 +
    \xi}{2}\bigr)\frac{|C|}{2}$.
\end{proof}

\begin{cor}
  Let $G_1$ and $G_2$ be (strongly) shortcut groups.  Then
  $G_1 \times G_2$ is (strongly) shortcut.
\end{cor}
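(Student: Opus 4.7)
The plan is to combine \Thmref{prod} with the elementary fact that a product of proper cocompact actions is itself proper and cocompact. Specifically, I would fix proper cocompact actions $G_i \actson \Gamma_i$ with each $\Gamma_i$ a (strongly) shortcut graph, and consider the diagonal action of $G_1 \times G_2$ on $\Gamma = \Gamma_1 \times \Gamma_2$ determined by $(g_1,g_2) \cdot (v_1,v_2) = (g_1 v_1, g_2 v_2)$.

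First I would check that this diagonal action is by graph automorphisms on the product graph $\Gamma$, which is immediate from the definition of the product graph, since adjacency in $\Gamma$ is preserved by independent automorphisms of each factor. Then I would verify cocompactness: if $D_i \subset \Gamma_i$ is a compact fundamental domain for $G_i \actson \Gamma_i$, then $D_1 \times D_2$ is a compact subset of $\Gamma$ whose $(G_1 \times G_2)$-translates cover $\Gamma$. Properness is equally routine: a compact subset $K \subset \Gamma$ is contained in $K_1 \times K_2$ for compact $K_i \subset \Gamma_i$, and if $(g_1,g_2)\cdot K \cap K \neq \emptyset$ then $g_i K_i \cap K_i \neq \emptyset$ for each $i$, which holds for only finitely many $g_i$ by the properness of each $G_i \actson \Gamma_i$; hence there are only finitely many such $(g_1,g_2)$.

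Having established that $G_1 \times G_2$ acts properly and cocompactly on $\Gamma$, I would invoke \Thmref{prod} to conclude that $\Gamma$ is (strongly) shortcut, and hence by definition that $G_1 \times G_2$ is (strongly) shortcut.

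There is essentially no main obstacle here: the corollary is a direct packaging of the graph-level theorem with standard properties of product actions, and the argument is the same in the shortcut and strongly shortcut cases since \Thmref{prod} handles both uniformly.
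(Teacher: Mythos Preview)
Your proposal is correct and is exactly the intended argument: the paper states the corollary with no proof, treating it as immediate from \Thmref{prod} together with the routine observation that the diagonal action of $G_1 \times G_2$ on $\Gamma_1 \times \Gamma_2$ is proper and cocompact. There is nothing to add.
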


\subsection{Trees of shortcut graphs}

Let $T$ be a tree.  An \defterm{arc decomposition} of $T$ is a
partition of the set of edges of $T$ such that the following
conditions hold.
\begin{enumerate}
\item The union of the edges in each part is isomorphic to a path,
  which we refer to as an \defterm{arc}.
\item The interior vertices of each arc all have degree two.
\end{enumerate}
The endpoints of the arcs of an arc decomposition are called
\defterm{nodes}.  Every tree comes equipped with a default arc
decomposition whose arcs are simply its edges.  A \defterm{tree of
  graphs with discrete edge graphs} is a surjective (possibly
degenerate) combinatorial map $\Gamma \to T$ from a graph $\Gamma$ to
a tree $T$ that is equipped with an arc decomposition such that the
following conditions hold.
\begin{enumerate}
\item The preimage of each node $v$ is a connected subgraph $\Gamma_v$
  called the \defterm{vertex graph} at $v$ of $\Gamma \to T$.
\item If $P$ is an arc of $T$ with endpoints $u$ and $v$ then the
  preimage of $P \setminus \{u,v\}$ is a disjoint union
  $\bigsqcup_{\alpha} \bigl(\tilde P_{\alpha} \setminus \{\tilde
  u_{\alpha},\tilde v_{\alpha}\}\bigr)$ where, for each $\alpha$, the
  subgraph $\tilde P_{\alpha}$ maps isomorphically onto $P$ with
  $\tilde u_{\alpha}$ and $\tilde v_{\alpha}$ mapping onto $u$ and
  $v$.  The paths $\tilde P_{\alpha}$ are called \defterm{lifts} of
  $P$.
\end{enumerate}
The preimage of the midpoint of an arc $P$ of $T$ is a discrete set
called the \defterm{edge graph} $\Gamma_P$ at $P$ of $\Gamma \to T$.
For each endnode $v$ of an arc $P$ of $T$ there is a function
$\Gamma_{v,P} \colon \Gamma_P \to \Gamma_v$ sending $p \in \Gamma_P$
to the unique vertex contained in $\tilde P \cap \Gamma_v$ where
$\tilde P$ is the lift of $P$ that contains $p$.  The $\Gamma_{v,P}$
are called the \defterm{attaching maps} of $\Gamma \to T$.

\begin{rmk}
  The graph $\Gamma$ has the structure of a graph of spaces where the
  underlying graph is a tree, the vertex spaces are graphs, the edge
  spaces are discrete graphs and the attaching maps are combinatorial
  maps.  The theory of graphs of spaces is developed in Scott and Wall
  \cite{Scott:1979}.  Here we subdivide the edges of the tree into
  longer arcs so that the vertex spaces are at greater distance to
  each other in $\Gamma$.  We will make use of this in the proof
  \Thmref{toig}.
\end{rmk}

For the purpose of discussing trees of graphs, it is convenient to
consider combinatorial maps $C \to \Gamma$, with $C$ homeomorphic to
$S^1$, which are not necessarily nondegenerate.  Call such a map a
\defterm{cycle with degeneracies}.

The following lemma is a variant of Jordan's separator theorem for
trees \cite{Jordan:1869}.

\begin{lem}
  \lemlabel{treecycle} Let $T$ be a tree.  Let $f \colon C \to T$ be a
  cycle with degeneracies of length $|C| \ge 3$.  Then for some vertex
  $w \in f(C^0)$, the metric subspace $f^{-1}(w)$ has diameter at
  least $\frac{|C|}{3}$.
\end{lem}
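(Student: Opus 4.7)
The plan is to adapt Jordan's classical centroid theorem for trees to the cycle-in-tree setting. Let $T' = f(C)$ denote the image subtree, and weight each vertex $v \in T'$ by $\omega(v) = |f^{-1}(v) \cap C^0|$, so that $\sum_v \omega(v) = |C|$. Applying Jordan's centroid theorem for trees with positive integer vertex weights, I obtain a vertex $w \in f(C^0)$ with the property that every connected component of $T' \setminus \{w\}$ carries total weight at most $|C|/2$.

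Next, I would translate this centroid condition into a length bound on the \emph{excursions} at $w$, that is, the connected components of $C \setminus f^{-1}(w)$. By continuity and connectedness of $f$, each such excursion maps entirely into a single connected component of $T' \setminus \{w\}$, and the $C$-vertices in all excursions landing in a given tree-component together contribute exactly that component's weight, which is at most $|C|/2$. Consequently the total length of excursions into a single component of $T' \setminus \{w\}$ is at most $|C|/2 + O(1)$, and in particular no single excursion exceeds $2|C|/3$ (small cases $|C| \le 5$ being handled directly by noting that any closed walk of length $\ge 3$ in a tree must revisit some vertex).

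Finally, I would conclude using the following cycle-geometry dichotomy: for any closed subset $S \subset C$, if $\diam_C(S) < |C|/3$, then $S$ is contained in a single arc of length strictly less than $|C|/3$, and hence the complement $C \setminus S$ contains an arc of length strictly greater than $2|C|/3$. Applied to $S = f^{-1}(w)$: were the diameter less than $|C|/3$, some excursion at $w$ would have length exceeding $2|C|/3$, contradicting the bound produced by the centroid. Thus $\diam f^{-1}(w) \ge |C|/3$, as required.

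The main obstacle is the cycle-geometry dichotomy. Proving that $\diam_C(S) < |C|/3$ forces $S$ into a single arc of length $< |C|/3$ is a combinatorial exercise on the consecutive gaps in $S$: if every gap were at most $|C|/3$, then the partial sums of the gaps would, by intermediate-value-style reasoning, lie in $[|C|/3, 2|C|/3]$ for some index, yielding a pair of points of $S$ at cyclic distance at least $|C|/3$ and contradicting the diameter assumption. Once this dichotomy is established, linking the excursion bound to the diameter bound is immediate.
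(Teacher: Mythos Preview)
Your approach is correct (modulo the small-$|C|$ cases you flag), but it takes a genuinely different route from the paper's.

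The paper's proof is a short extremal argument: choose $w$ so that $\diam f^{-1}(w)$ is \emph{maximal} among all vertices in the image, and suppose this diameter is $< |C|/3$. Letting $u,v$ realise the diameter, the long arc $P$ of length $\ge 2|C|/3$ from $u$ to $v$ meets $f^{-1}(w)$ only at its endpoints, so its first and last edges both map to the unique edge $ww'$ of $T$ leading into the component of $T\setminus\{w\}$ containing $f(P^\circ)$. Then the second and second-to-last vertices of $P$ lie in $f^{-1}(w')$ and are strictly farther apart in $C$ than $u$ and $v$, contradicting maximality. No centroid, no separate dichotomy lemma.

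Your argument instead imports Jordan's centroid to pick $w$, then converts the weight bound on branches into the excursion bound $\ell\le |C|/2+1$, and finally invokes a cycle-geometry dichotomy to link ``small diameter'' to ``long excursion''. All three pieces are sound; in particular your dichotomy sketch is right once one observes that under the hypothesis $\diam S<|C|/3$ each gap is forced to be either $<|C|/3$ or $>2|C|/3$, so the partial-sum argument covers exactly the residual case. The trade-off: the paper's proof is a few lines and entirely self-contained, while yours is longer and uses an external classical theorem, but it makes the connection to Jordan's separator explicit and isolates a reusable dichotomy about subsets of cycles. Both arguments share the same boundary behaviour at very small $|C|$.
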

\begin{proof}
  For $w$ in the image of $f$, consider the metric subspace
  $f^{-1}(w) \subset C$.  Choose $w$ in the image of $f$ such that
  $f^{-1}(w)$ has the largest possible diameter.  Let the vertices
  $u,v \in f^{-1}(w)$ achieve the diameter of $f^{-1}(w)$.  Suppose,
  for the sake of finding a contradiction, that
  $d_C(u,v) < \frac{|C|}{3}$.  Then the segment $P$ of length
  $|P| \ge \frac{2|C|}{3}$ between $u$ and $v$ in $C$ intersects
  $f^{-1}(w)$ only at $u$ and $v$.  Indeed any point $p \in f^{-1}(w)$
  must be at distance less than $\frac{|C|}{3}$ to both $u$ and $v$
  and there is no such point of $P$.  But then the first and last
  edges of $P$ map nondegenerately to some common edge $ww'$ of $T$.
  But then $f^{-1}(w')$ has diameter larger than the diameter of
  $f^{-1}(w)$ contradicting our choice of $w$.  Hence
  $d_C(u,v) \ge \frac{|C|}{3}$.
\end{proof}

\begin{lem}
  \lemlabel{distrib} Let $T$ be a tree.  Let $f \colon C \to T$ be a
  cycle with degeneracies of length $|C| \ge 3$.  Let
  $L < \frac{|C|}{3} + 1$ and suppose that for each edge
  $e \subset f(C)$, the distance between the midpoints of any two
  edges of $f^{-1}(e)$ is at most $L$.  Then for some vertex
  $w \in T^0$, any segment $P \subset C$ whose interior is disjoint
  from $f^{-1}(w)$ has length $|P| \le L + 1$.
\end{lem}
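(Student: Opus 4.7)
The plan is to take the vertex $w$ provided by \Lemref{treecycle}, which satisfies $\diam_C\bigl(f^{-1}(w)\bigr) \ge |C|/3$, and argue by contradiction. Informally, the hypothesis on $L$ says that preimages of each edge of $T$ are clumped together in $C$, and this should be incompatible with a large gap between points of the spread-out set $f^{-1}(w)$.

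I would suppose for contradiction that some segment $P \subset C$ has interior disjoint from $f^{-1}(w)$ with $|P| > L+1$, and, without loss of generality, replace $P$ by the closure of the component of $C \setminus f^{-1}(w)$ containing $\mathrm{int}(P)$; this only enlarges $|P|$ and ensures that both endpoints $p,q$ of $P$ lie in $f^{-1}(w)$. The crucial observation is then that, because $T$ is a tree, $f\bigl(\mathrm{int}(P)\bigr)$ is a connected subset of $T \setminus \{w\}$ and hence lies in the closure of a single component $T'$ of $T \setminus \{w\}$. The first edge of $P$ is nondegenerate (otherwise its interior would be in $f^{-1}(w)$) and has an endpoint mapping to $w$, so it maps onto the unique edge $e$ of $T$ joining $w$ to $T'$; likewise the last edge of $P$ maps onto $e$.

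Applying the hypothesis to these two edges of $f^{-1}(e)$ gives $d_C(m_1,m_2) \le L$ for their midpoints $m_1,m_2$. But the arc of $C$ from $m_1$ to $m_2$ through $P$ has length exactly $|P|-1 > L$, so the complementary arc must have length at most $L$, i.e., $|P| \ge |C|-L+1$. Then the complementary segment of $P$ in $C$ has length at most $L-1$ and contains all of $f^{-1}(w)$, so $\diam_C\bigl(f^{-1}(w)\bigr) \le L-1 < |C|/3$ by the standing assumption $L < |C|/3 + 1$, contradicting the choice of $w$. The main delicate point is the tree-component observation that simultaneously pins both boundary edges of the maximal $P$ onto the same edge $e$ of $T$, so that the hypothesis can actually be invoked; the rest is bookkeeping about lengths.
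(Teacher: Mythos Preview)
Your proof is correct and follows essentially the same route as the paper's: take $w$ from \Lemref{treecycle}, note that the first and last edges of a maximal gap $P$ map to the same edge of $T$, obtain the dichotomy $|P|\le L+1$ or $|P|\ge |C|-L+1$, and rule out the second case using the diameter bound $\diam_C f^{-1}(w)\ge |C|/3$. The paper argues directly rather than by contradiction and is terser about the nondegeneracy/common-edge step, but the substance is identical.
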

\begin{proof}
  Let $w$ be as in \Lemref{treecycle} and let $P$ be the closure of a
  component of $C \setminus f^{-1}(w)$.  We need to show that
  $|P| \le L + 1$.  The initial and terminal edges $e_1$ and $e_2$ of
  $P$ map to the same edge of $T$ and so either $|P| \le L + 1$ or
  $|P| \ge |C| - L + 1$.  But $|C| - L + 1 > \frac{2|C|}{3}$ while, by
  our choice of $w$, we have $|P| \le \frac{2|C|}{3}$.  Hence
  $|P| \le L + 1$ as required.
\end{proof}

A cycle with degeneracies is \defterm{$\xi$-almost isometric} if
\[ d_{\Gamma}\bigl(f(p),f(q)\bigr) \ge \xi \frac{|C|}{2} \] for any
antipodal pair of points $p,q \in C$.

\begin{lem}
  \lemlabel{degip} Let $\Gamma$ be strongly shortcut with $\theta$
  bounding the lengths of the $\xi$-almost isometric cycles of
  $\Gamma$.  Then there exist $\xi' \in (0,1)$ and $\theta' \in \N$
  depending only on $\xi$ and $\theta$ such that $\theta'$ bounds the
  lengths of the $\xi'$-almost isometric cycles with degeneracies of
  $\Gamma$.
\end{lem}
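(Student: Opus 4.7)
The plan is to collapse the degenerate edges of a $\xi'$-almost isometric cycle with degeneracies $f \colon C \to \Gamma$ via the natural contraction map $\pi \colon C \to C'$ to obtain a nondegenerate cycle $f' \colon C' \to \Gamma$, apply the strong shortcut hypothesis to $f'$ to bound $|C'|$, and translate this into a bound on $|C|$.

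First, I control the number $k$ of degenerate edges in $C$.  By rotating an antipodal pair continuously around $C$, I can arrange antipodal $p, q$ for which the two arcs between them contain $k_1$ and $k_2$ degenerate edges with $|k_1 - k_2| \le 1$, so $\max\{k_1, k_2\} \ge k/2$.  Since each image arc in $\Gamma$ has length bounded above by the number of nondegenerate edges in the corresponding arc of $C$, I obtain $d_{\Gamma}\bigl(f(p),f(q)\bigr) \le |C|/2 - k/2$.  Combining with the $\xi'$-almost isometric hypothesis $\xi' |C|/2 \le d_{\Gamma}\bigl(f(p),f(q)\bigr)$ yields $k \le (1-\xi')|C|$ and hence $|C'| = |C| - k \ge \xi'|C|$.

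Next, I use \Propref{pqna} to show that $f'$ is $\xi$-almost isometric for a suitable choice of $\xi'$.  For any $p', q' \in C'$, pick lifts $p, q \in C$.  Since $\pi$ is $1$-Lipschitz, $d_C(p, q) \ge d_{C'}(p', q')$, and so by \Propref{pqna} applied to $f$,
\begin{align*}
  d_{\Gamma}\bigl(f'(p'),f'(q')\bigr) = d_{\Gamma}\bigl(f(p),f(q)\bigr)
  &\ge d_C(p,q) - (1-\xi')\frac{|C|}{2} \\
  &\ge d_{C'}(p',q') - (1-\xi')\frac{|C|}{2}.
\end{align*}
Using $|C| \le |C'|/\xi'$, the desired inequality $d_{\Gamma}\bigl(f'(p'),f'(q')\bigr) \ge d_{C'}(p',q') - (1-\xi)|C'|/2$ reduces to the algebraic condition $\xi'(1-\xi) \ge 1 - \xi'$, i.e., $\xi' \ge \frac{1}{2-\xi}$.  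Fixing any such $\xi' \in (0,1)$, \Propref{pqna} applied in the other direction gives that $f'$ is $\xi$-almost isometric.

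Finally, the strong shortcut hypothesis bounds $|C'| \le \theta$ whenever $|C'| \ge 3$.  The small cases $|C'| \le 2$ are easily dispatched: $|C'| = 0$ forces $f$ constant and hence $|C| = 0$, $|C'| = 1$ is impossible since $\Gamma$ is simplicial, and $|C'| = 2$ forces $\diam f'(C') \le 1$ so that $\xi'|C|/2 \le 1$, giving $|C| \le 2/\xi'$.  In all cases $|C| \le |C'|/\xi' \le (2-\xi)\theta$, so setting $\theta' = \lceil (2-\xi)\theta \rceil$ (chosen large enough to absorb the small cases) completes the proof.  The main obstacle is the continuity argument in the first step producing an antipodal pair that balances the degenerate edges between the two arcs; once this is in hand, the rest is a clean application of \Propref{pqna}.
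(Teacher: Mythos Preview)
Your argument is correct and takes a genuinely different route from the paper's proof.

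The paper keeps the domain $C$ fixed and perturbs the map: it first assumes a ``parity condition'' (each maximal degenerate segment has even length), in which case it replaces $f$ by a nondegenerate map $f' \colon C \to \Gamma$ that folds each degenerate segment onto the following edge in a zig-zag, so that $d_\Gamma\bigl(f(p),f'(p)\bigr) \le 1$ for all $p$; it then reduces the general case to this one by alternately expanding and contracting single edges in the odd-length degenerate segments, changing $|C|$ by at most $1$.  Your approach instead keeps the map fixed and collapses the domain, passing to the nondegenerate quotient $f' \colon C' \to \Gamma$.  This sidesteps the parity issue entirely and is cleaner; the paper's route has the minor advantage of preserving $|C|$ up to $\pm 1$, which is not needed here.

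Two small remarks.  First, your ``main obstacle'' in Step~1 is not actually an obstacle: for \emph{every} antipodal pair $p,q$ the two arc images have total length $|C|-k$, so $d_\Gamma\bigl(f(p),f(q)\bigr) \le \min\{n_1,n_2\} \le (|C|-k)/2$ directly, giving $k \le (1-\xi')|C|$ without any balancing argument.  Second, \Propref{pqna} is stated only for (nondegenerate) cycles, but its proof uses nothing beyond $f$ being $1$-Lipschitz, so it applies verbatim to your degenerate $f$; you should note this when invoking it.
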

\begin{proof}
  Let $\theta$ bound the lengths of the $\xi$-almost isometrically
  embedded cycles of $\Gamma$.  Let $f \colon C \to \Gamma$ be a cycle
  with degeneracies.  Define $S \subset C$ as the union of all edges
  of $C$ that map to vertices under $f$.  We may assume that
  $S \neq C$ since, otherwise $f$ is the constant map and so satisfies
  the conclusion of the lemma for any $\xi'$.  Let
  $(P_i)_{i=1}^{\ell}$ be the sequence of components of $S$ in the
  order they are visited in some traversal of $C$.  We begin by
  showing, for $\theta'' \ge \max\bigl\{\theta,\frac{8}{1-\xi}\bigr\}$
  and $\xi'' = \frac{1 + \xi}{2}$, that if $|C| > \theta''$ and
  $|P_i|$ is even for each $i$ then there exist antipodal $p,q \in C$
  such that $d_{\Gamma}\bigl(f(p),f(q)\bigr) < \xi''\frac{|C|}{2}$.
  Call the condition that the $|P_i|$ are even the \defterm{parity
    condition}.  Later we will use this result to prove the statement
  of the lemma, for
  $\theta' \ge \max\bigl\{\theta'', \frac{2(2+\xi'')}{1-\xi''}\bigr\}$
  and $\xi' = \frac{1 + \xi''}{2}$, with no assumption on the parities
  of the $|P_i|$.

  Assume $f$ satisfies the parity condition and $|C| > \theta''$.  We
  obtain from $f$ a cycle without degeneracies
  $f' \colon C \to \Gamma$ by setting
  $f'|_{C \setminus S} = f|_{C \setminus S}$ and mapping each edge of
  $P_i$ onto $f(e_i)$ where $e_i$ is the edge that follows $P_i$ in
  some fixed orientation of $C$.  This is possible since $f$ satisfies
  the parity condition.  Thus $f'$ folds $P_i$ onto $f(e_i)$ in a
  zig-zag fashion.  Then for any point $p \in C$, we have
  $d_{\Gamma}\bigl(f(p),f'(p)\bigr) \le 1$.  If $|C| > \theta''$ then
  there is a pair of antipodal points $p,q \in C$ such that
  $d_{\Gamma}\bigl(f'(p),f'(q)\bigr) \le \xi \frac{|C|}{2}$.  Hence
  \[ d_{\Gamma}\bigl(f(p),f(q)\bigr) < \xi \frac{|C|}{2} + 2 =
    \Bigl(\xi + \frac{4}{|C|}\Bigr)\frac{|C|}{2} \le \Bigl(\xi +
    \frac{4}{\theta''}\Bigr)\frac{|C|}{2} \le \xi''\frac{|C|}{2} \] as
  required.

  We now consider a general $f\colon C \to \Gamma$ that does not
  necessarily satisfy the parity condition.  Assume that
  $|C| > \theta'$.  Let $i_0 < i_1 < \cdots < i_{m-1}$ be the set of
  indices for which $|P_i|$ is not even and assume $|C| > \theta'$.
  Obtain a cycle $f' \colon C' \to \Gamma$ from $f$ by contracting an
  edge in each $P_{i_j}$ with $j$ odd and expanding a vertex $v$ to an
  edge $e \mapsto f(v)$ in each $P_{i_j}$ with $j$ even.  Then $f'$
  satisfies the parity condition and we have
  $|C| \le |C'| \le |C| + 1$.  There is a relation
  $R \subset C \times C'$ with $pRq$ if and only if one of the
  following holds.
  \begin{enumerate}
  \item $p$ was obtained directly from $q$
  \item $p$ is contained in an edge that was contracted to $q$
  \item $p$ is a vertex which was expanded to an edge that contains
    $q$
  \end{enumerate}
  By the alternating nature of the expansions and contractions we see
  that if $pRp'$ and $qRq'$ then $d_C(p,q) \ge d_{C'}(p',q') - 1$.  By
  the previous paragraph, we have a pair of antipodal points
  $p',q' \in C'$ such that
  $d_{\Gamma}\bigl(f'(p'),f'(q')\bigr) < \xi''\frac{|C'|}{2}$.  Take
  any $p'',q'' \in C$ satisfying $p''Rp'$ and $q''Rq'$.  Then
  $f(p'') = f'(p')$ and $f(q'') = f'(q')$ and so
  $d_{\Gamma}\bigl(f(p''),f(q'')\bigr) < \xi''\frac{|C'|}{2} \le
  \xi''\frac{|C|+1}{2}$ and yet $d_C(p'',q'') \ge \frac{|C|}{2} - 1$.
  Hence, as $f$ is $1$-Lipschitz, for some antipodal pair of points
  $p,q \in C$ we have
  \[ d_{\Gamma}\bigl(f(p),f(q)\bigr) < \xi''\frac{|C|+1}{2} + 1 =
    \Bigl(\xi'' + \frac{\xi''+2}{|C|}\Bigr)\frac{|C|}{2} < \Bigl(\xi''
    + \frac{\xi''+2}{\theta'}\Bigr)\frac{|C|}{2} \le
    \xi'\frac{|C|}{2} \] as required.
\end{proof}

\begin{thm}
  \thmlabel{toig} Let $\phi \colon \Gamma \to T$ be a tree of graphs
  with discrete edge graphs satisfying the following conditions.
  \begin{enumerate}
  \item The vertex graphs $\Gamma_v$ are uniformly (strongly) shortcut
    in the sense that there exists $\theta \ge 3$ (and
    $\xi \in (0,1)$) such that $\theta$ bounds the lengths of the
    ($\xi$-almost) isometric cycles of every vertex graph.
  \item The arcs of $T$ all have some common length $M$ such that, for
    every attaching map $\Gamma_{v,P}$ of $\phi$, the diameter of
    $\Gamma_{v,P}(\Gamma_P)$ is at most $M$.
  \end{enumerate}
  Then $\Gamma$ is (strongly) shortcut.
\end{thm}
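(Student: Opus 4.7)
The plan is to reduce to the uniform shortcut bound on the vertex graphs by showing that any long $\xi$-almost isometric cycle $f\colon C\to\Gamma$ must concentrate inside a single vertex graph $\Gamma_w$, and then build a cycle with degeneracies in $\Gamma_w$ whose length is bounded by \Lemref{degip}.  I would handle the strongly shortcut case with $\xi$ fixed, and the shortcut case by the same argument with $\xi$ chosen close to $1$ in terms of $|C|$, as allowed by \Rmkref{A}.

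First I would push $f$ down to $T$ and analyze $\phi\comp f\colon C\to T$ as a cycle with degeneracies.  If two edges of $C$ project to a common edge of $T$ lying in an arc $P$, their $f$-midpoints lie on two distinct lifts of $P$ and can be joined in $\Gamma$ by entering a vertex graph at one endnode $u$ of $P$ through the first lift, crossing the attaching image $\Gamma_{u,P}(\Gamma_P)$ (of diameter at most $M$), and returning through the other lift, for a total $\Gamma$-distance of at most $2M$.  Then \Propref{pqna} translates this into the $C$-distance bound
\[ L := 2M + (1-\xi)\tfrac{|C|}{2}. \]
Once $\xi$ and $|C|$ are large enough that $L<|C|/3+1$, \Lemref{distrib} (via \Lemref{treecycle}) produces a node $w\in T$ such that every segment of $C$ whose interior is disjoint from $f^{-1}(\Gamma_w)$ has length at most $L+1$.

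I would then construct $f_w\colon C_w\to\Gamma_w$ by leaving $f|_{f^{-1}(\Gamma_w)}$ intact and replacing each \emph{excursion}---a maximal subsegment of $C$ lying outside $\Gamma_w$---with a geodesic in $\Gamma_w$ between its endpoints.  Because $T$ is a tree, each excursion must enter and leave $\Gamma_w$ through a single arc $P$ incident to $w$, so its endpoints lie in $\Gamma_{w,P}(\Gamma_P)$ and are at $\Gamma_w$-distance at most $M$; the replacement therefore has length at most $M$.  The length $|C_w|$ is a definite fraction of $|C|$, and by pulling antipodal pairs of $C_w$ back to nearby points of $C$ and combining the $\xi$-almost isometry of $f$ with $d_{\Gamma_w}\ge d_\Gamma$ on $\Gamma_w$, one checks that $f_w$ is $\xi'$-almost isometric in $\Gamma_w$ for some $\xi'$ depending only on $\xi$ and $M$ (with $\xi'\to 1$ as $\xi\to 1$).  \Lemref{degip} applied to the uniformly strongly shortcut vertex graphs then bounds $|C_w|$, and hence $|C|$.

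The main obstacle I anticipate is the quantitative bookkeeping in this last step: verifying that $\xi'$ stays bounded away from $0$ as $|C|$ grows, and that the constants $\xi$, $M$, $|C|$ can be arranged so that every inequality along the way is simultaneously satisfied.  Passing to cycles with degeneracies via \Lemref{degip} is essential because a geodesic replacement in $\Gamma_w$ can collapse to a point---when an excursion's endpoints coincide---forcing a degeneracy in $f_w$.  In the merely shortcut case I would carry $\xi$ and $\xi'$ as functions of the relevant cycle lengths throughout, appealing to \Rmkref{A} both for $\Gamma$ and for the vertex graphs.
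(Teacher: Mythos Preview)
Your strongly shortcut argument is essentially the paper's, with one genuine gap: \Lemref{distrib} produces a \emph{vertex} $w\in T^0$, not a node.  Since $T$ carries an arc decomposition and vertex graphs are defined only at nodes, $w$ may lie in the interior of an arc, and then there is no $\Gamma_w$ in which to build your $f_w$.  The paper treats this case separately: if $w$ is interior to an arc $P$, any two points of $(\phi\circ f)^{-1}(w)$ map into lifts of $P$ and are at $\Gamma$-distance at most $2M$, so combining \Propref{pqna} with the $(L+1)$-bound on excursions yields a direct bound on $|C|$ without ever passing to a vertex graph.  A secondary technical difference is that the paper keeps the domain $C$ fixed and defines a cycle with degeneracies $f'\colon C\to\Gamma_w$ (mapping each excursion $Q_j$ onto a geodesic, after checking that the endpoints of $Q_j$ are at $\Gamma_w$-distance at most $|Q_j|$), rather than forming a new cycle $C_w$; this makes the comparison of antipodal pairs and the application of \Lemref{degip} cleaner, though your version could be made to work with more bookkeeping.

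For the merely shortcut case your unified plan runs into a real obstacle.  \Lemref{degip} is stated and proved only for strongly shortcut vertex graphs, and there is no analogue for shortcut graphs: an ``isometric cycle with degeneracies'' is not a meaningful notion, since isometric maps are injective.  So once your $f_w$ acquires a degeneracy (which, as you note, it can) you cannot invoke the shortcut hypothesis on $\Gamma_w$, and carrying $\xi,\xi'$ as functions of $|C|$ via \Rmkref{A} does not rescue this.  The paper instead gives a short direct argument in the shortcut case that exploits injectivity of an isometric cycle $f$: if $f$ leaves a vertex graph it must traverse an entire lift $\tilde P$ of some arc $P$ and (by looking at $\phi\circ f$) a second lift $\hat P$ in the opposite direction; the diameter bound on attaching maps then forces the two remaining segments of $C$ to have length at most $M$ each, giving $|C|\le\max\{\theta,4M\}$.
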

\begin{proof}
  We will first consider the case where the vertex graphs are
  shortcut.  Let $f \colon C \to \Gamma$ be an isometric cycle.  If
  $f$ maps entirely into a single vertex graph then $|C| \le \theta$
  by hypothesis.  So, suppose the image of $f$ contains some edge in
  the lift $\tilde P$ of an arc $P$ of $T$.  Then, since $f$ is
  injective, it must traverse all of $\tilde P$ and, by consideration
  of $\phi \comp f$, it must also traverse some other lift $\hat P$ of
  $P$ in the opposite direction.  Let $Q$ and $Q'$ be the segments of
  $C$ which map isomorphically to $\tilde P$ and $\hat P$.  Let
  $u \in Q$ and $u' \in Q'$ be endpoints of $Q$ and $Q'$ mapping to
  the same vertex graph.  Then
  $d_{\Gamma}\bigl(f(u),f(u')\bigr) \le M$ and so $d_C(u,u') \le M$
  whereas $Q$ and $Q'$ each have length $M$.  Hence the geodesic
  segment $R$ of $C$ between $u$ and $u'$ is disjoint from the
  interiors of $Q$ and $Q'$.  The same goes for the geodesic segment
  $R'$ between the other pair of endpoints of $Q$ and $Q'$.  Then $C$
  is covered by the segments $Q$, $Q'$, $R$ and $R'$, each of which
  has length at most $M$.  Hence the lengths of the isometric cycles
  of $\Gamma$ are bounded by $\max\{\theta, 4M\}$.
  
  The case where the vertex graphs are strongly shortcut requires a
  more delicate argument relying on the preceeding lemmas.  By
  \Lemref{degip} we can replace $\theta$ and $\xi$ so that $\theta$
  bounds the lengths of the $\xi$-almost isometric cycles with
  degeneracies of all the vertex graphs.  Let $\xi' = \frac{\xi+2}{3}$
  and let $\theta' = \max\bigl\{\theta,\frac{24M+6}{1-\xi}\bigr\}$.
  Let $f \colon C \to \Gamma$ be a $\xi'$-almost isometric cycle.  We
  will prove that $|C| \le \theta'$.  Since
  $\frac{24M+6}{1-\xi} \ge 4$ we may assume that $|C| \ge 4$.
  
  If the image of $f$ is contained entirely in a single vertex graph
  then $|C| \le \theta \le \theta'$ since $\xi' \ge \xi$.  So let us
  assume that $f$ is not confined to a single vertex graph.  Then
  $\phi \comp f$ maps some pair of distinct edges $e$ and $e'$ of $C$
  onto a common edge of some arc $P$ of $T$.  Then $f$ maps $e$ and
  $e'$ onto a pair of edges in the same relative position in lifts of
  $P$.  Let $\bar e$ and $\bar e'$ be the images of $e$ and $e'$ under
  $f$.  Since $P$ has length $M$ and the attaching maps of $\phi$ have
  diameter bounded by $M$, we have
  $d_{\Gamma}(p_{\bar e},p_{\bar e'}) \le 2M$ where $p_{\bar e}$ and
  $p_{\bar e'}$ are the midpoints of $\bar e$ and $\bar e'$. Then by
  \Propref{pqna} $d_C(p_e,p_{e'}) \le 2M + (1 - \xi')\frac{|C|}{2}$
  where $p_e$ and $p_{e'}$ are the midpoints of $e$ and $e'$.  Let
  $L = 2M + (1 - \xi')\frac{|C|}{2}$.  If $L \ge \frac{|C|}{3} + 1$
  then we have
  \begin{align*}
    0 &\le 2M + \Bigl(1 - \frac{\xi + 2}{3}\Bigr)\frac{|C|}{2} - \frac{|C|}{3} - 1 \\
      &= 2M - 1 + \Bigl(\frac{1}{2} - \frac{\xi + 2}{6} - \frac{1}{3}\Bigl)|C| \\
      &= 2M - 1 - \Bigl(\frac{\xi + 1}{6}\Bigr)|C|
  \end{align*}
  and so
  $|C| \le \frac{6(2M-1)}{\xi+1} \le \frac{24M+6}{1-\xi} \le \theta'$.
  So we may assume that $L < \frac{|C|}{3} + 1$ and can apply
  \Lemref{distrib} to $\phi \comp f$ and $L$ to obtain a vertex
  $w \in T^0$ such that any segment $Q \subset C$ whose interior is
  disjoint from $(\phi \comp f)^{-1}(w)$ has length $|Q| \le L+1$.
  Let $v \in (\phi \comp f)^{-1}(w)$ be a vertex.

  Suppose $w$ is an interior vertex of an arc $P$ of $T$.  Let $p$ be
  the antipode of $v$ and let $p'$ be a point of
  $(\phi \comp f)^{-1}(w)$ that is nearest to $p$.  Then
  $d_C(p,p') \le \frac{L+1}{2}$ and so
  $d_C(p',v) \ge \frac{|C|}{2} - \frac{L+1}{2}$.  So, since arcs have
  length $M$ and the images of attaching maps of $\phi$ have diameter
  at most $M$, we have
  \begin{align*}
    2M &\ge d_{\Gamma}\bigl(f(p'),f(v)\bigr) \\
       & \ge d_C(p',v) - (1-\xi')\frac{|C|}{2} \\
       &\ge \frac{|C|}{2} - \frac{L+1}{2} - (1-\xi')\frac{|C|}{2} \\
       &= \xi'\frac{|C|}{2} - \frac{L+1}{2}
  \end{align*}
  where the second inequality holds by \Propref{pqna}.  So, recalling
  that $L = 2M + (1 - \xi')\frac{|C|}{2}$ we have
  \[ 2M \ge \xi'\frac{|C|}{2} - M - (1 - \xi')\frac{|C|}{4} -
    \frac{1}{2} \] which gives
  $|C| \le \frac{4M+2}{3\xi' - 1} = \frac{4M+2}{\xi+1} \le
  \frac{24M+6}{1-\xi} \le \theta'$.
  
  Suppose $w$ is a node of $T$.  Then
  $(\phi \comp f)^{-1}(w) = f^{-1}(\Gamma_w)$.  Let $(P_i)_i$ be the
  components of $f^{-1}(\Gamma_w)$ and let $(Q_j)_j$ be the closures
  of the components of $C \setminus f^{-1}(\Gamma_w)$.  Then
  $|Q_j| \le L+1$ for each $j$ and $f$ maps each $P_i$ into $\Gamma_w$
  and maps each $Q_j$ into the closure of the complement of
  $\Gamma_w$.  We will define a cycle with degeneracies
  $f' \colon C \to \Gamma_w$ that agrees with $f$ on the $P_i$ and
  that maps each $Q_j$ onto a geodesic of $\Gamma_w$.  To see that
  this is possible, we need only to show that the endpoints of each
  $Q_j$ map to a distance of at most $|Q_j|$ in $\Gamma_w$.  The
  endpoints of $Q_j$ map to a distance of at most $M$ since $M$ bounds
  the diameters of the attaching maps of $\phi$.  So we need only
  consider the case where $|Q_j| < M$.  But then $Q_j$ is not long
  enough for $f|_{Q_j}$ to traverse the lift of an arc of $T$ since
  the arcs have length $M$.  Hence the endpoints of $Q_j$ map to the
  same vertex of $\Gamma_w$.  So we are able to define
  $f' \colon C \to \Gamma_w$.  Then, for a point $p \in C$, we have
  $d_{\Gamma}\bigl(f(p),f'(p)\bigr) \le M + \frac{L+1}{2}$.
  So if $p,q\in C$ are any antipodal pair then
  \begin{align*}
    d_{\Gamma_w}\bigl(f'(p),f'(q)\bigr)
    & \ge d_{\Gamma}\bigl(f'(p),f'(q)\bigr) \\
    & \ge d_{\Gamma}\bigl(f(p),f(q)\bigr) - 2M - L - 1 \\
    & \ge \xi'\frac{|C|}{2} - 2M - L - 1
  \end{align*}
  and so
  $d_{\Gamma_w}\bigl(f'(p),f'(q)\bigr) \ge \bigl(\xi' -
  \frac{2(2M+L+1)}{|C|}\bigr)\frac{|C|}{2}$.  So as long as
  $\xi' - \frac{2(2M+L+1)}{|C|} \ge \xi$ then $f'$ is $\xi$-almost
  isometric and thus we have $|C| \le \theta \le \theta'$.  If
  $\xi' - \frac{2(2M+L+1)}{|C|} < \xi$ then we have
  \begin{align*}
    0 &< \xi - \frac{\xi + 2}{3} + \frac{2\bigl(2M+2M + \bigl(1 - \frac{\xi + 2}{3}\bigr)\frac{|C|}{2} + 1\bigr)}{|C|} \\
      &= \frac{2\xi - 2}{3} + \frac{8M}{|C|} + \Bigl(1 - \frac{\xi + 2}{3}\Bigr) + \frac{2}{|C|} \\
    &= \frac{\xi - 1}{3} + \frac{8M+2}{|C|}
  \end{align*}
  and so $|C| < \frac{24M+6}{1-\xi} \le \theta'$.
\end{proof}

\begin{cor}
  \corlabel{gosg} Let $\mathscr{G}$ be a finite graph of (strongly)
  shortcut groups with finite edge groups.  Then the fundamental group
  of $\mathscr{G}$ is (strongly) shortcut.
\end{cor}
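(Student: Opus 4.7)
The plan is to reduce the statement to \Thmref{toig} by exhibiting an action of $G = \pi_1(\mathscr{G})$ on a suitably constructed tree of graphs with discrete edge graphs. Since $\mathscr{G}$ has only finitely many vertex and edge groups, all the uniformity needed can be arranged in one step. By \Propref{free} each vertex group $G_v$ acts freely and cocompactly on a (strongly) shortcut graph $\Lambda_v$, and taking the maximum over the finitely many $v$ gives a single $\theta \ge 3$ (and, in the strong case, a single $\xi \in (0,1)$) which bounds the lengths of the ($\xi$-almost) isometric cycles of every $\Lambda_v$. Each edge group $G_e$ is finite, so each inclusion $G_e \hookrightarrow G_v$ has finite image; the orbit of any chosen basepoint of $\Lambda_v$ under this finite image has finite diameter, and taking the maximum over the finitely many such orbits yields a uniform bound $M \ge 1$.

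Next, let $T_0$ denote the Bass-Serre tree of $\mathscr{G}$, on which $G$ acts cocompactly (and without edge inversions, after barycentrically subdividing if necessary) with vertex stabilizers conjugate to vertex groups and with finite edge stabilizers conjugate to edge groups. Subdivide each edge of $T_0$ into $M$ edges to obtain $T$, and equip $T$ with the arc decomposition whose arcs are precisely the subdivided edges of $T_0$, each of length $M$. Construct $\Gamma$ $G$-equivariantly as follows: attach a copy $\Gamma_{\tilde v}$ of $\Lambda_v$ at each vertex $\tilde v$ of $T_0$, carrying the induced $\mathrm{Stab}(\tilde v)$-action; for each edge $\tilde e$ of $T_0$ with finite stabilizer $H$ joining $\tilde v_1$ and $\tilde v_2$, glue each point of the $H$-orbit of the basepoint in $\Gamma_{\tilde v_1}$ to the corresponding point of the $H$-orbit in $\Gamma_{\tilde v_2}$ by a new disjoint path of length $M$. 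The natural projection $\phi \colon \Gamma \to T$ collapses each vertex graph to its node and sends each attaching path isomorphically onto its arc.

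The map $\phi$ is then a tree of graphs with discrete edge graphs satisfying all three hypotheses of \Thmref{toig}: uniform ($\xi$-almost) isometric cycle bound $\theta$ on the vertex graphs by our choice of $\theta$ (and $\xi$); attaching-map images of diameter at most $M$ by our choice of $M$; and every arc of length $M$ by construction. The $G$-action on $\Gamma$ is proper (vertex stabilizers act properly on their vertex graphs and edge stabilizers are finite) and cocompact (since $\mathscr{G}$ is finite), so \Thmref{toig} gives the conclusion. The main obstacle is the $G$-equivariant bookkeeping in the construction of $\Gamma$, which works precisely because the edge groups are \emph{finite}: this is what supplies the uniform bound $M$ on attaching-map images and makes it feasible to subdivide $T_0$ by a single fixed length.
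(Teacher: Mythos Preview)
Your argument is correct and follows essentially the same approach as the paper: build a tree of graphs with discrete edge graphs on (a subdivision of) the Bass-Serre tree, with vertex graphs the (strongly) shortcut graphs supplied by \Propref{free}, then invoke \Thmref{toig}. The paper phrases the construction via the Scott--Wall graph of spaces and \Corref{fundgroup}, taking the $1$-skeleton of the universal cover, while you build the same object directly; the resulting tree of graphs and the verification of the hypotheses of \Thmref{toig} are the same.
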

\begin{proof}
  Let $\Gamma$ be the underlying graph of $\mathscr{G}$.  We construct
  a graph of spaces $\mathscr{H}$ on $\Gamma$ such that the
  fundamental group functor sends $\mathscr{H}$ to $\mathscr{G}$.  See
  Scott and Wall for this viewpoint on graphs of groups
  \cite{Scott:1979}.  By \Corref{fundgroup}, we can choose the vertex
  spaces so that their universal covers have (strongly) shortcut
  $1$-skeleton.  The $1$-skeleton $\tilde \Gamma$ of the universal
  cover of $\mathscr{H}$ has the structure $\tilde \Gamma \to T$ of a
  tree of graphs $\tilde \Gamma \to T$ where $T$ is the Bass-Serre
  tree of $\mathscr{G}$.  The fundamental group $\pi_1(\mathscr{G})$
  acts freely and cocompactly on $\Gamma$.  For $M$ large enough,
  subdividing each edge of $T$ into an arc of length $M$ results in a
  tree of graphs that satisfies the conditions of \Thmref{toig}.
\end{proof}

\begin{cor}
  Amalgamations and HNN extensions of (strongly) shortcut groups
  over finite subgroups are (strongly) shortcut.
\end{cor}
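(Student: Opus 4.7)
The plan is to recognize amalgamations and HNN extensions as the fundamental groups of particularly simple finite graphs of groups, so that the result falls out of \Corref{gosg} with essentially no additional work.

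First I would recall the standard presentation of these constructions in the Bass--Serre language. An amalgamated free product $A *_C B$ is the fundamental group of the graph of groups $\mathscr{G}$ whose underlying graph has two vertices, labeled by $A$ and $B$, joined by a single edge labeled by $C$, with the obvious inclusions $C \hookrightarrow A$ and $C \hookrightarrow B$ as edge-to-vertex monomorphisms. Similarly, an HNN extension $A*_C$ is the fundamental group of the graph of groups whose underlying graph is a single vertex labeled $A$ with a loop edge labeled $C$, and whose two edge-to-vertex monomorphisms $C \hookrightarrow A$ are the two given inclusions.

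In both cases the underlying graph is finite, the vertex groups are (strongly) shortcut by hypothesis, and the edge groups coincide with $C$, which is finite by hypothesis. Thus $\mathscr{G}$ is a finite graph of (strongly) shortcut groups with finite edge groups, and \Corref{gosg} immediately yields that its fundamental group is (strongly) shortcut.

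There is no real obstacle here; the work has already been done in the proofs of \Thmref{toig} and \Corref{gosg}. The only thing to verify, which is standard Bass--Serre theory (see \cite{Scott:1979}), is that the amalgamation and HNN extension are correctly recovered as $\pi_1$ of the graphs of groups described above.
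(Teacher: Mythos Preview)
Your proposal is correct and matches the paper's intent: the corollary is stated immediately after \Corref{gosg} without a separate proof, precisely because amalgamations and HNN extensions are the fundamental groups of the one-edge and one-loop graphs of groups you describe, and \Corref{gosg} applies directly.
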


Note that $\BS(1,2)$ is an HNN extension of $\Z$ but is not strongly
shortcut.  Hence, we see that the condition that the edge groups be
finite is essential in the strong shortcut case.

\section{Examples}
\seclabel{examp}

In this section we prove that hyperbolic graphs, $1$-skeletons of
$\CAT(0)$ cube complexes, the standard Cayley graphs of finitely
generated Coxeter groups and all Cayley graphs of $\Z$ and $\Z^2$ are
strongly shortcut.  In particular, hyperbolic groups, cocompactly
cubulated groups and finitely generated Coxeter groups are strongly
shortcut.

\subsection{Hyperbolic graphs}

The thinness of geodesic bigons in a hyperbolic graph immediately
implies the shortcut property.  In this section we will prove that
hyperbolic graphs are in fact strongly shortcut.  To do so we will
make use of the following proposition whose proof is given in Bridson
and Haefliger \cite{Bridson:1999}.

\begin{prop}[Specialization of Proposition~1.6 of Part~III of
  \proplabel{bridson} Bridson and Haefliger \cite{Bridson:1999}]
  Let $\Gamma$ be a $\delta$-hyperbolic graph.  Let
  $f \colon P \to \Gamma$ be a $1$-Lipchitz map to $\Gamma$ from a
  compact interval $P \subset \R$.  If $Q \subset \Gamma$ is the image
  of a geodesic joining the endpoints of $f$, then
  \[ d_{\Gamma}\bigl(x,f(P)\bigr) \le \delta \max\{0,\log_2|P|\} +
    1 \] for every $x \in Q$.
\end{prop}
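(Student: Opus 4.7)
The plan is to prove this by dyadic bisection of the interval $P$, using $\delta$-thinness of geodesic triangles to absorb the cost of each bisection step. This is the standard approach for logarithmic stability results in hyperbolic spaces, and it is what Bridson and Haefliger do in the cited reference.

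I would proceed by induction on the length $|P|$ of the interval. For the base case, when $|P| \le 1$, the $1$-Lipschitz hypothesis forces the endpoints of $f$ to be at distance at most $|P| \le 1$ in $\Gamma$, so $Q$ has length at most $1$; hence every $x \in Q$ lies within $1$ of an endpoint of $Q$, which is a point of $f(P)$. This matches the bound $\delta\max\{0,\log_2|P|\} + 1 = 1$.

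For the inductive step, assume $|P| > 1$. Let $m$ be the midpoint of $P$, write $P = P_1 \cup P_2$ for the two halves, each of length $|P|/2$, and set $y = f(m)$. Choose geodesic segments $Q_1$ from $f$ of the left endpoint of $P$ to $y$, and $Q_2$ from $y$ to $f$ of the right endpoint of $P$. Then $Q$, $Q_1$ and $Q_2$ form a geodesic triangle in $\Gamma$, so by $\delta$-thinness every $x \in Q$ lies within $\delta$ of some $x' \in Q_1 \cup Q_2$; without loss of generality $x' \in Q_1$. Since $f|_{P_1}$ is again $1$-Lipschitz and its endpoints are those of $Q_1$, the inductive hypothesis applied to $f|_{P_1}$, $Q_1$ and $x'$ gives
\[ d_{\Gamma}\bigl(x', f(P_1)\bigr) \le \delta\max\{0,\log_2(|P|/2)\} + 1, \]
and combining with $d_{\Gamma}(x,x') \le \delta$ yields the target bound, since each bisection adds $\delta$ to the distance while the $\log_2$ term drops by $1$.

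The main point requiring care is bookkeeping the constant and closing the induction uniformly. The argument is cleanest when $|P|$ is a power of $2$, where bisection lands exactly in the base case after $\log_2|P|$ steps. The delicate regime is $1 < |P| \le 2$, where a single bisection already lands us in the base case $|P_1| \le 1$ and the claimed bound $\delta\max\{0,\log_2|P|\}+1$ is tighter than what a crude application of the induction gives; verifying this range (as well as the degenerate case where the endpoints of $f$ coincide, forcing $Q$ to be a point) is the routine technical step that must be done to match the precise constants stated, and is the reason I would cite Bridson and Haefliger rather than re-derive the exact form.
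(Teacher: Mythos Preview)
The paper does not give its own proof of this proposition; it simply cites Bridson and Haefliger and uses the result as a black box. Your sketch is precisely the standard dyadic bisection argument from that reference, and you have correctly identified the one point needing care (the regime $1 < |P| < 2$ where the naive induction overshoots the stated constant).
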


\begin{thm}
  \thmlabel{hypgraph} Let $\Gamma$ be a hyperbolic graph.  Then $\Gamma$ is
  strongly shortcut.
\end{thm}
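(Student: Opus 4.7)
The plan is to fix some $\xi > 1/2$ (so that $2\xi - 1 > 0$) and show directly that a $\xi$-almost isometric cycle $f\colon C \to \Gamma$ must have $|C|$ bounded by a constant depending only on $\delta$ and $\xi$. Given such an $f$, pick an antipodal pair $p,q \in C$; by the $\xi$-almost isometric hypothesis, $d := d_{\Gamma}\bigl(f(p),f(q)\bigr) \ge \xi |C|/2$. Let $\beta$ be a geodesic from $f(p)$ to $f(q)$ and let $x$ be its midpoint, so $d_{\Gamma}\bigl(x,f(p)\bigr) = d_{\Gamma}\bigl(x,f(q)\bigr) = d/2$. Regard the two half-cycles of $f$ as $1$-Lipschitz maps from intervals of length $|C|/2$ and apply \Propref{bridson} to each: this produces points $z_1, z_2 \in C$, one on each half-cycle, with $d_{\Gamma}\bigl(x,f(z_i)\bigr) \le K$ where $K := \delta\log_2(|C|/2) + 1$.

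From here, I would extract two complementary estimates. The triangle inequality immediately gives $d_{\Gamma}\bigl(f(z_1),f(z_2)\bigr) \le 2K$. On the other hand, the same triangle inequality combined with $d_{\Gamma}\bigl(x,f(p)\bigr) = d/2$ forces $d_{\Gamma}\bigl(f(z_i),f(p)\bigr) \ge d/2 - K$ and likewise for $f(q)$; since $f$ restricted to each half-cycle is $1$-Lipschitz, this lower-bounds the $C$-distance from each $z_i$ to both $p$ and $q$ along its half-cycle. A short case analysis, depending on whether the shortest arc between $z_1$ and $z_2$ passes through $p$ or through $q$, then yields $d_C(z_1,z_2) \ge d - 2K$.

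Combining the two estimates through \Propref{pqna}, which gives $d_{\Gamma}\bigl(f(z_1),f(z_2)\bigr) \ge d_C(z_1,z_2) - (1-\xi)|C|/2$, we obtain
\[
  2K \;\ge\; (d - 2K) - (1-\xi)\tfrac{|C|}{2} \;\ge\; (2\xi - 1)\tfrac{|C|}{2} - 2K,
\]
so $(2\xi - 1)|C|/2 \le 4K = 4\bigl(\delta \log_2(|C|/2) + 1\bigr)$. The right-hand side grows only logarithmically in $|C|$, so this inequality bounds $|C|$ by a constant $\theta = \theta(\delta,\xi)$, proving that $\Gamma$ is strongly shortcut. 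The main technical point is the lower bound $d_C(z_1,z_2) \ge d - 2K$: one must verify that the symmetric lower bounds $d_{\Gamma}\bigl(f(z_i),f(p)\bigr), d_{\Gamma}\bigl(f(z_i),f(q)\bigr) \ge d/2 - K$ survive the passage from $\Gamma$-distance to $C$-distance and rule out both ways the chord $z_1 z_2$ could close up in $C$.
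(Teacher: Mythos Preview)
Your proposal is correct and follows essentially the same argument as the paper's proof: the paper fixes $\xi = \tfrac{3}{4}$, picks antipodes $y,y'$, applies \Propref{bridson} to each half-cycle to find points $p_1,p_2$ close to the midpoint of a geodesic $[f(y),f(y')]$, lower-bounds $d_C(p_1,p_2)$ via exactly the triangle-inequality computation you describe, and then combines the upper bound $d_{\Gamma}(f(p_1),f(p_2)) \le 2K$ with \Propref{pqna} to get a linear-versus-logarithmic inequality in $|C|$. Your flagged ``main technical point'' $d_C(z_1,z_2) \ge d - 2K$ is genuine and holds: since $f$ is $1$-Lipschitz on all of $C$, the bounds $d_{\Gamma}(f(z_i),f(p)),\,d_{\Gamma}(f(z_i),f(q)) \ge d/2 - K$ transfer directly to $d_C$, and as each arc between $z_1$ and $z_2$ passes through one of $p,q$, both arc lengths are at least $d - 2K$.
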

\begin{proof}
  Let $\delta \ge 1$ be a hyperbolicity constant for $\Gamma$.
  Suppose $f \colon C \to \Gamma$ is a $\frac{3}{4}$-almost isometric
  cycle of length $|C| \ge 2$.  Let $y,y' \in C$ be a pair of
  antipodal points and let $P_1 \subset C$ and $P_2 \subset C$ be the
  two segments of $C$ between $y$ and $y'$.  Let $Q$ be the image of a
  geodesic in $\Gamma$ from $f(y)$ to $f(y')$ and let $x$ be the
  midpoint of $Q$.  Then, by \Propref{bridson}, there are points
  $p_1 \in P_1$ and $p_2 \in P_2$ such that $f(p_1)$ and $f(p_2)$ are
  each at distance at most $\delta \log_2\frac{|C|}{2} + 1$ from $x$
  in $\Gamma$.  Then, since $f$ is $\frac{3}{4}$-almost isometric, we
  have $|Q| \ge \frac{3|C|}{8}$ and
  \begin{align*}
    \frac{3|C|}{16} \le \frac{1}{2}|Q| &= d_{\Gamma}\bigl(f(y),x\bigr) \\
    &\le d_{\Gamma}\bigl(f(y),f(p_1)\bigr) +
    d_{\Gamma}\bigl(f(p_1),x\bigr) \\
    & \le d_C(y,p_1) + \delta \log_2\frac{|C|}{2} + 1
  \end{align*}
  and so
  $d_C(y,p_1) \ge \frac{3|C|}{16} - \delta \log_2\frac{|C|}{2} - 1$.
  By the same argument we have the same lower bound for $d_C(y,p_2)$
  and $d_C(y',p_1)$ and $d_C(y',p_2)$.  Hence
  \[ d_C(p_1,p_2) \ge \frac{3|C|}{8} - 2 \delta \log_2\frac{|C|}{2} -
    2 \] and so, by \Propref{pqna}, we have
  \[ d_{\Gamma}\bigl(f(p_1),f(p_2)\bigr) \ge \frac{3|C|}{8} - 2 \delta
    \log_2\frac{|C|}{2} - 2 - \Bigl(1 - \frac{3}{4}\Bigr)\frac{|C|}{2}
    = \frac{|C|}{4} - 2 \delta \log_2\frac{|C|}{2} - 2 \] but $f(p_1)$
  and $f(p_2)$ are both within a distance of
  $\delta \log_2\frac{|C|}{2} + 1$ to $x$ and so
  $d_{\Gamma}\bigl(f(p_1),f(p_2)\bigr) \le 2\delta \log_2\frac{|C|}{2}
  + 2$.  Hence we have
  \[ |C| \le 16\Bigl(\delta\log_2\frac{|C|}{2} + 1\Bigr) \] which
  bounds the length $|C|$ of $f$.
\end{proof}

\begin{cor}
  Hyperbolic groups are strongly shortcut.
\end{cor}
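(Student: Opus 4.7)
The plan is to exploit the logarithmic thinness of geodesic triangles in a $\delta$-hyperbolic graph to produce a ``shortcut'' across the middle of any long $\xi$-almost isometric cycle, for $\xi$ chosen close enough to $1$ (say $\xi = \frac{3}{4}$). Fix such a cycle $f \colon C \to \Gamma$, choose antipodal points $y, y' \in C$, and let $P_1, P_2 \subset C$ be the two halves of $C$ joining them. Let $Q$ be a geodesic in $\Gamma$ from $f(y)$ to $f(y')$ with midpoint $x$. Since $f$ is $\xi$-almost isometric, $|Q| = d_\Gamma(f(y),f(y')) \ge \xi\frac{|C|}{2}$, so $|Q|$ is linear in $|C|$.

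The next step is to apply \Propref{bridson} separately to the two $1$-Lipschitz segments $f|_{P_i} \colon P_i \to \Gamma$, each of which has endpoints equal to the endpoints of the geodesic $Q$. This produces points $p_1 \in P_1$ and $p_2 \in P_2$ with $d_\Gamma(f(p_i), x) \le \delta \log_2\frac{|C|}{2} + 1$. In particular, $f(p_1)$ and $f(p_2)$ lie within a logarithmic-in-$|C|$ distance of each other, so
\[ d_\Gamma\bigl(f(p_1),f(p_2)\bigr) \le 2\delta \log_2\tfrac{|C|}{2} + 2. \]

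On the other hand, I would lower bound the cyclic distance $d_C(p_1,p_2)$ linearly. For each of the four combinations in $\{y,y'\} \times \{p_1,p_2\}$, the triangle inequality in $\Gamma$ through $x$ together with the fact that $f$ is $1$-Lipschitz gives, for example,
\[ d_C(y,p_1) \ge d_\Gamma\bigl(f(y),f(p_1)\bigr) \ge \tfrac{|Q|}{2} - \bigl(\delta \log_2\tfrac{|C|}{2} + 1\bigr) \ge \tfrac{\xi|C|}{4} - \delta \log_2\tfrac{|C|}{2} - 1. \]
Combining the four estimates along $C$ yields $d_C(p_1,p_2) \ge \frac{\xi|C|}{2} - O(\log|C|)$, and then \Propref{pqna} (applied as though $p_1,p_2$ were an antipodal-style pair, using their distance in $C$) converts this into a linear-in-$|C|$ lower bound on $d_\Gamma(f(p_1),f(p_2))$. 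Comparing with the logarithmic upper bound forces an inequality of the form $|C| \le c_1(\delta \log_2 |C| + c_2)$, which bounds $|C|$ uniformly.

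The main obstacle is the bookkeeping: I need the linear-in-$|C|$ terms (coming from $\xi > 0$ and the length of $Q$) to strictly dominate the sum of the $O(\log|C|)$ slack from Bridson--Haefliger and the $(1-\xi)\frac{|C|}{2}$ slack from \Propref{pqna}. This forces me to choose $\xi$ strictly bigger than some threshold, like $\xi = \frac{3}{4}$, so that after subtracting the $(1-\xi)\frac{|C|}{2}$ term from the linear lower bound on $d_\Gamma(f(p_1),f(p_2))$ a positive linear quantity remains to be absorbed by the logarithm. Once the parameters are aligned, everything collapses to an inequality in $|C|$ alone, and \Propref{bilip} converts the bound on $\frac{3}{4}$-almost isometric cycles into the strong shortcut conclusion.
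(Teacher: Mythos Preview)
Your proposal is correct and follows essentially the same argument as the paper's proof of \Thmref{hypgraph}: pick antipodal $y,y'$, apply \Propref{bridson} to each half of $C$ to find $p_1,p_2$ within $\delta\log_2\tfrac{|C|}{2}+1$ of the midpoint of a geodesic $Q$, lower bound $d_C(p_1,p_2)$ linearly via the triangle inequality and the $\tfrac{3}{4}$-almost isometric hypothesis, then use \Propref{pqna} and compare with the logarithmic upper bound on $d_\Gamma\bigl(f(p_1),f(p_2)\bigr)$. One small remark: the final appeal to \Propref{bilip} is unnecessary, since a bound on the lengths of $\tfrac{3}{4}$-almost isometric cycles is already the definition of strongly shortcut; and to pass from graphs to groups you should note (as the paper leaves implicit) that a hyperbolic group acts properly and cocompactly on its Cayley graph, which is hyperbolic.
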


\subsection{\texorpdfstring{$\CAT(0)$}{CAT(0)} cube complexes}

In this section we will prove that the $1$-skeleton of a
finite-dimensional $\CAT(0)$ cube complex is strongly shortcut.  The
proof rests on a theorem about edge colorings of cycles.

Let $C$ be a cycle.  An \defterm{edge coloring} of $C$ is a function
$\alpha \colon C^{(1)} \to W$ from the set $C^{(1)}$ of edges of $C$
to some set $W$ of \defterm{colors}.  A cycle $C$ along with an edge
coloring $\alpha \colon C^{(1)} \to W$ is a \defterm{wall cycle} if
$\alpha$ is surjective and, for each $w \in W$, the number
$|\alpha^{-1}(w)|$ of edges of color $w$ is even.  In this case we may
refer to the elements of $W$ as \defterm{walls}.

Let $(C, \alpha)$ be a wall cycle.  A combinatorial segment
$P \subset C$ \defterm{crosses} a wall $w \in W$ if the number of
edges of $P$ colored $w$ is odd.  A combinatorial segment
$P \subset C$ \defterm{begins and ends} with a wall $w \in W$ if the
initial and terminal edges of $P$ map to $w$ under $\alpha$.  Two
distinct walls $w_1,w_2 \in W$ \defterm{cross} if for some
combinatorial segment $P \subset C$, we have that $P$ begins and ends
with one of the two walls and $P$ crosses the other of the two walls.
The \defterm{dimension} $d$ of a wall cycle $(C,\alpha)$ is defined as
$d = \max\{1,n\}$ where $n$ is the size of the largest set
$S \subseteq W$ of pairwise crossing walls.  The \defterm{wall
  crossing distance} $d_{\alpha}(u,v)$ between a pair of vertices
$u,v \in C^0$ is defined as the number of walls crossed by a segment
$P \subset C$ from $u$ to $v$.  Note that the choice of segment $P$
does not matter since each wall appears an even number of times along
$C$.

\begin{prop}
  \proplabel{homcoloring} Let $X$ be a $\CAT(0)$ cube complex.  Let
  $W$ be the set of hyperplanes of $X$ and let
  $\beta \colon X^{(1)} \to W$ map each edge $e$ of $X$ to the
  hyperplane that $e$ crosses.  Then for any cycle
  $f \colon C \to X^1$, the coloring $(C, \alpha)$ is a wall cycle,
  where $\alpha(e) = \beta\bigl(f(e)\bigr)$ for $e \in C^{(1)}$.
  Moreover, two crossing walls of $(C, \alpha)$ must
  cross in $X$ and so the dimension of $X$ is at least the
  dimension of $(C,\alpha)$.  Lastly, the wall crossing distance on
  $(C,\alpha)$ satisfies
  $d_{\alpha}(u,v) = d_{X^1}\bigl(f(u),f(v)\bigr)$.
\end{prop}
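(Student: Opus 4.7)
The plan is to exploit the hyperplane structure of the $\CAT(0)$ cube complex $X$. Two standard facts will do all the work: each hyperplane $w$ of $X$ separates $X$ into two half-spaces, so every vertex of $X$ has a well-defined side with respect to $w$; and for any combinatorial edge path in $X^1$, its endpoints lie on opposite sides of $w$ if and only if the path traverses an odd number of edges dual to $w$. Consequently, $d_{X^1}(x,y)$ equals the number of hyperplanes separating $x$ from $y$.

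First I would verify that $(C,\alpha)$ is a wall cycle, taking the color set to be the image of $\alpha$ so that surjectivity is automatic. For the parity condition, fix a hyperplane $w$ and any basepoint in $C$: traversing $C$ once under $f$ returns to the starting vertex and hence to the same side of $w$, while each $w$-colored edge flips the side, so $|\alpha^{-1}(w)|$ must be even.

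Next I would prove the distance formula. Given $u,v \in C^0$, let $P$ be either of the two segments of $C$ joining them. By the side-flipping principle, the walls that $P$ crosses are exactly the hyperplanes that separate $f(u)$ from $f(v)$, a set of size $d_{X^1}(f(u),f(v))$. Hence $d_\alpha(u,v) = d_{X^1}(f(u),f(v))$.

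The main step is showing that crossing walls of $(C,\alpha)$ come from crossing hyperplanes of $X$, which I would prove by contraposition. Suppose $w_1,w_2$ are hyperplanes that do not cross in $X$. Then one half-space of $w_1$ is contained in some half-space $H$ of $w_2$; moreover, any edge of $X$ dual to $w_1$ has its two endpoints on the same side of $w_2$ (because it does not cross $w_2$), and at least one of these endpoints lies in $H$, so both do. Now let $P \subset C$ be any segment beginning and ending with edges colored $w_1$: its endpoints $u,v$ map under $f$ to endpoints of $w_1$-dual edges of $X$, so $f(u),f(v)$ both lie in $H$, and by the side-flipping principle $P$ contains an even number of $w_2$-colored edges. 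By symmetry, no segment beginning and ending with $w_2$ can cross $w_1$ either, so $w_1,w_2$ do not cross in $(C,\alpha)$. Contrapositively, crossing walls lift to crossing hyperplanes, so any pairwise crossing set of walls in $(C,\alpha)$ yields a pairwise crossing set of hyperplanes in $X$ of the same size, giving $\dim(C,\alpha) \le \dim X$. The only nontrivial ingredient is the characterization of non-crossing hyperplanes as having nested half-space systems, which is standard for $\CAT(0)$ cube complexes; the rest reduces to bookkeeping about parities of signed crossings.
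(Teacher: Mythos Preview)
Your proposal is correct and takes essentially the same approach as the paper: both invoke the standard facts that hyperplanes are two-sided (hence separate $X$ and give the parity condition), that non-crossing hyperplanes have nested half-space systems (giving the crossing implication), that $\dim X$ equals the maximal size of a pairwise-crossing family of hyperplanes, and that $d_{X^1}$ counts separating hyperplanes. The paper's proof is only a one-paragraph sketch citing these facts, whereas you have spelled out the parity and contrapositive arguments in detail; nothing further is needed.
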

\begin{proof}
  That $(C,\alpha)$ is a wall cycle is a consequence of the fact that
  hyperplanes of a $\CAT(0)$ complex are two-sided.  That two crossing
  walls of $(C,\alpha)$ must cross in $X$ is a consequence of the fact
  that hyperplanes are connected and two-sided.  The dimension of a
  $\CAT(0)$ cube complex is equal to the size of the largest set of
  its pairwise crossing hyperplanes.  Finally, the combinatorial
  distance between two vertices of a $\CAT(0)$ cube complex is equal
  to the number of hyperplanes separating them.
\end{proof}

In light of \Propref{homcoloring}, the following theorem implies that
the $1$-skeletons of $d$-dimensional $\CAT(0)$ cube complexes are
strongly shortcut.

\begin{thm}
  \thmlabel{wallcycle} Let $(C,\alpha)$ be a $d$-dimensional wall
  cycle.  If
  $d_{\alpha}(u,v) \ge \bigl(\frac{5d-1}{5d}\bigr)\frac{|C|}{2}$ for
  all antipodal pairs of vertices $u,v \in C^0$ then
  $|C| \le \frac{50d^2}{5d-1}$.
\end{thm}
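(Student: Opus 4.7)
My plan is to proceed by contradiction, combining a double-counting argument over antipodal pairs with the dimension hypothesis to force the existence of too many pairwise crossing walls.

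Assume $|C| = 2m$ is even (the odd case can be reduced to this after a minor adjustment).  Summing the hypothesis $d_{\alpha}(u,v) \ge \xi m$ (with $\xi = (5d-1)/(5d)$) over the $m$ antipodal vertex pairs of $C$ gives
\[
  \sum_{w \in W} c_w \;\ge\; \xi m^2,
\]
where $c_w$ counts the antipodal vertex pairs whose separating arc has an odd number of $w$-colored edges.  For a wall with exactly two edges separated by arcs of lengths $\ell_w$ and $2m - \ell_w$ on $C$ (with $\ell_w \le m$), a direct parity computation shows $c_w = \ell_w$; when every wall has just two edges, $|W| = m$ and $\sum_w \ell_w \ge \xi m^2$, so the average $\ell_w$ is at least $\xi m$ and a substantial fraction of walls are \emph{wide}, meaning their two edges are nearly antipodal on $C$.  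Walls with more edges can be handled by a careful pairing argument that tracks the parity condition along $C$.

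The next step is to contradict the abundance of wide walls using the dimension hypothesis.  Each wide wall corresponds to a chord of the disk bounded by $C$ lying close to a diameter.  A family of such near-diameter chords that is pairwise non-crossing must be laminar, and on a cycle of length $2m$ the total arc-length budget sharply restricts how many near-diameter chords can nest inside one another: chords whose shorter arc has length at least $\beta m$ admit a laminar family of size $O(m(1-\beta))$ at most.  The crossing graph on the wide walls is therefore a circle graph with small independence number and, by the dimension hypothesis, with clique number at most $d$.  A Ramsey-type bound on circle graphs then limits the total number of wide walls to about $O(d \cdot m(1-\beta))$, which, when balanced against the lower bound $(\xi - \beta)m/(1-\beta)$ coming from the counting step, forces an upper bound on $m$ matching $|C| \le 50d^2/(5d-1)$ after optimizing $\beta$.

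The main obstacle is this last step: getting quantitatively tight control of the crossing/independence structure of wide chord families.  Recovering the precise constant $50d^2/(5d-1)$ requires a careful choice of the width threshold $\beta$, an explicit accounting of laminar arrangements of near-diameter chords, and a clean combinatorial argument (in place of the general-purpose circle-graph results) to convert bounded clique number together with bounded independence number into a correspondingly sharp bound on the number of wide chords.
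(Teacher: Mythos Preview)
Your proposal is not a proof but a sketch with an explicitly acknowledged gap: you say yourself that ``the main obstacle is this last step'' and that recovering the constant requires ``a clean combinatorial argument (in place of the general-purpose circle-graph results)'' that you do not supply.  The Ramsey-type bound on circle graphs you invoke is left entirely unspecified, the handling of walls with more than two edges is deferred to ``a careful pairing argument'' that is not given, and the optimization in $\beta$ is never carried out.  As written this is a plan, not an argument.

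The paper's route is quite different from yours and avoids the obstacle you identify.  Rather than bounding an independence number for wide chords and appealing to a Ramsey-style tradeoff, the paper counts \emph{edges} in the crossing graph on $W$ and invokes Tur\'an's theorem directly.  The key lemma is that, under the $\xi$-hypothesis, every wall $w$ must cross at least $\diam X_w - 1 - (1-\xi)\tfrac{|C|}{2}$ other walls; summing this over the walls $w$ that appear exactly twice (whose complement is small by a separate lemma) and interchanging the order of summation with the same double count $\sum_w c_w = \sum_{\{u,v\}} d_\alpha(u,v)$ that you use, one gets an explicit lower bound on $|\Gamma^{(1)}|$ of the form $\bigl(\xi + 1 - \tfrac{1}{\xi^2} - \tfrac{2}{\xi|C|}\bigr)\tfrac{|W|^2}{2}$.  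Tur\'an's theorem gives the upper bound $\tfrac{d-1}{d}\cdot\tfrac{|W|^2}{2}$, and comparing the two with $\xi = \tfrac{5d-1}{5d}$ yields $|C| \le \tfrac{50d^2}{5d-1}$ by elementary algebra.  No laminar/independence analysis is needed, and the constant falls out of the Tur\'an comparison rather than from optimizing a threshold parameter.
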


\begin{cor}
  \corlabel{cccs} The $1$-skeletons of finite dimensional $\CAT(0)$ cube
  complexes are strongly shortcut.
\end{cor}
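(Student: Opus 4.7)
The plan is to argue by contradiction. Suppose $(C,\alpha)$ is a $d$-dimensional wall cycle satisfying the antipodal hypothesis with $|C| > \frac{50d^2}{5d-1}$; I aim to exhibit $d+1$ pairwise crossing walls, contradicting the dimension bound. The first step is an edge-accounting argument: for any antipodal pair $(u,v)$ with half-segment $P$, the identity $|P| = \sum_w a_w^P = \frac{|C|}{2}$ combined with the hypothesis $d_\alpha(u,v) \ge \frac{5d-1}{5d}\cdot\frac{|C|}{2}$ yields the excess bound $\sum_w\bigl(a_w^P - [a_w^P\text{ odd}]\bigr) \le \frac{|C|}{10d}$. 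A short calculation shows $|W| \ge \frac{5d-1}{5d}\cdot\frac{|C|}{2}$ and that the number of walls $w$ with $a_w \ge 4$ is at most $\frac{|C|}{10d}$. Hence at least $\frac{5d-2}{5d}\cdot\frac{|C|}{2}$ walls are $2$-edge walls, and these correspond naturally to chords on $C$: two $2$-edge walls cross in the wall-cycle sense exactly when their chords interleave.

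The second step is an Erd\H{o}s--Szekeres argument at a fixed antipodal pair $(u,v)$. Parametrize each of the crossing $2$-edge chords by its endpoints $a_i \in P_1$ and $b_i \in P_2$. Two such chords cross as walls iff the orderings of the $a$'s and $b$'s agree, i.e., iff $(a_i-a_j)(b_i-b_j) > 0$. The dimension hypothesis forbids any monotone ascending subsequence of the $b$'s (ordered by $a$) of length $d+1$, so by Erd\H{o}s--Szekeres a descending subsequence has length at least $m/d$ where $m \ge \frac{5d-2}{5d}\cdot\frac{|C|}{2}$ is the number of crossing chords. This yields a pairwise-nested family $c_1 \supset c_2 \supset \cdots \supset c_N$ of $N \ge \frac{5d-2}{5d^2}\cdot\frac{|C|}{2}$ chords, all crossing the fixed diameter.

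The third step is a rotation argument to derive a contradiction from the nested structure. Sweep the antipodal pair around $C$ one edge at a time; each rotation changes the crossing status of a controlled number of chords. As the pair passes through positions between consecutive $a_i$'s, the innermost nested chord $c_N$ first becomes uncrossed, then $c_{N-1}$, and so on down the nested stack. Selecting an antipodal pair at which a block of $k$ innermost chords is simultaneously uncrossed, the antipodal hypothesis forces $k \le \frac{|C|}{10d}$; combining with the lower bound on $N$ from the second step and the hypothesis on $|C|$ gives the desired contradiction. I expect the main obstacle to be this final step, where one must verify that sufficiently many nested chords become uncrossed simultaneously at a single antipodal pair; careful bookkeeping of the chords' short-arc lengths and the rotation amounts will be needed to produce the precise constant $\frac{50d^2}{5d-1}$.
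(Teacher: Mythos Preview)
Your proposal targets \Thmref{wallcycle} (the combinatorial wall-cycle bound), which is indeed the heart of the matter: the corollary itself follows immediately from that theorem together with \Propref{homcoloring}, and you should say so explicitly.

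Your route is genuinely different from the paper's. The paper builds the crossing graph on the set of walls, lower-bounds its edge count by a double-counting argument over all antipodal pairs (using \Lemref{diamcontrib} and \Lemref{diamints}), and then applies Tur\'an's Theorem to force a $(d{+}1)$-clique. You instead fix a single antipodal pair, extract the $2$-edge walls crossing it as chords, and use Erd\H{o}s--Szekeres to produce a long nested (pairwise non-crossing) family, followed by a rotation argument. Steps~1 and~2 are sound: your counting of non-$2$-edge walls is correct, and your identification ``two crossing $2$-edge chords cross as walls iff $(a_i-a_j)(b_i-b_j)>0$'' is exactly right, so Erd\H{o}s--Szekeres applies as stated.

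The gap is in Step~3, and your own description of it is not quite correct. As you rotate the antipodal pair, it is \emph{not} true that the nested chords become uncrossed in the order $c_N, c_{N-1}, \ldots$: the crossing status of $c_i$ toggles each time the rotation parameter $r$ passes either $\alpha_i:=a_i$ or $\beta_i:=b_i-\tfrac{|C|}{2}$, and since the $\alpha_i$ increase while the $\beta_i$ decrease, these events interleave in a way that depends on the particular configuration. What \emph{does} work is the following observation: the set of rotations~$r$ at which $c_i$ is uncrossed is precisely the open interval between $\alpha_i$ and $\beta_i$. Because the $\alpha_i$ are increasing and the $\beta_i$ decreasing, these intervals have the Helly property---any two of them intersect, hence all of them intersect (after discarding at most one chord with $\alpha_i=\beta_i$). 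The common intersection is an open interval with half-integer endpoints, hence contains an integer~$r$; at that vertex-antipodal pair all remaining nested chords are simultaneously uncrossed. Your inequality $k\le\frac{|C|}{10d}$ (from $d_\alpha(u',v')\le |W|-k$ and $|W|\le\frac{|C|}{2}$) then closes the argument, and in fact yields a constant somewhat sharper than the paper's $\frac{50d^2}{5d-1}$.

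So the idea is salvageable, but the ``sweep and watch chords fall off in order'' picture you sketched would not produce the needed simultaneous uncrossing; the interval-intersection argument is what is actually required to make Step~3 rigorous.
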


\begin{cor}
  Cocompactly cubulated groups are strongly shortcut.
\end{cor}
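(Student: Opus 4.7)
The plan is simply to combine the definition of strongly shortcut with the immediately preceding corollary. Let $G$ be a cocompactly cubulated group, so by definition $G$ acts properly and cocompactly on a $\CAT(0)$ cube complex $X$. I would first note that any proper cocompact action on a $\CAT(0)$ cube complex forces $X$ to be finite dimensional: cocompactness yields only finitely many $G$-orbits of cubes, and an infinite ascending chain of cubes would give infinitely many orbits of cubes of distinct dimensions.

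Next, I would restrict attention to the $1$-skeleton $X^1$. The action $G \actson X$ is cellular, so it preserves $X^1$, and the restricted action $G \actson X^1$ is still proper and cocompact (properness is inherited from $X$, and a compact fundamental domain in $X$ intersects $X^1$ in a compact subset covering $X^1$ under the $G$-action, up to enlarging if necessary, since $X^1 \subset X$ and $X$ is built from finitely many orbits of cubes).

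By \Corref{cccs}, since $X$ is a finite dimensional $\CAT(0)$ cube complex, its $1$-skeleton $X^1$ is strongly shortcut. Therefore $G$ acts properly and cocompactly on the strongly shortcut graph $X^1$, which is exactly the definition of $G$ being strongly shortcut.

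There is no main obstacle here; the only minor point to verify cleanly is the finite dimensionality of $X$ under a cocompact action and the fact that cocompactness on $X$ restricts to cocompactness on $X^1$, both of which are standard.
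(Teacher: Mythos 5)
Your proof is correct and is exactly the argument the paper intends (the paper states this as an immediate corollary with no written proof): restrict to the $1$-skeleton, note finite dimensionality follows from cocompactness, and apply \Corref{cccs}. The small points you flag — finite dimensionality and that cocompactness passes to the $1$-skeleton — are indeed the only things to check, and your justifications are sound.
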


A group is \defterm{cocompactly cubulated} if it acts properly and
cocompactly on a $\CAT(0)$ cube complex.  Such groups include
right-angled Artin groups whose standard Cayley graphs are
$1$-skeletons of $\CAT(0)$ cube complexes
\cite{Charney_Davis:1995:artin} but not all Artin groups are
cocompactly cubulated \cite{haettel:cubulated_artin_tits:2021,
  Huang:2016}.  It is thus natural to ask if all Artin groups are
(strongly) shortcut.

The proof of \Thmref{wallcycle} relies on several lemmas and on the
following theorem of Turan.

\begin{thm}[Turan's Theorem]
  \thmlabel{turan} Let $\Gamma$ be a simplicial graph on $n$ vertices.
  If every complete subgraph of $\Gamma$ has at most $d \in \N$
  vertices then $\Gamma$ has at most
  $\bigl(\frac{d-1}{d}\bigr)\frac{n^2}{2}$ edges.
\end{thm}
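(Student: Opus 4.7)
The plan is to prove Turán's bound by strong induction on the number $n$ of vertices of $\Gamma$. For the base case $n \le d$, even the complete graph $K_n$ is $K_{d+1}$-free, and the inequality $\binom{n}{2} \le \bigl(\frac{d-1}{d}\bigr)\frac{n^2}{2}$ rearranges (after multiplying by $2d$) to $dn(n-1) \le (d-1)n^2$, i.e.\ $n \le d$, which is our assumption.

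For the inductive step, assume the bound for all graphs on fewer than $n$ vertices, and let $\Gamma$ be a $K_{d+1}$-free simplicial graph on $n$ vertices. The key step is to extract a clique $K \subseteq \Gamma$ of maximum size $t$; by the $K_{d+1}$-freeness of $\Gamma$ we have $t \le d$. I would then partition the edges of $\Gamma$ into three classes: edges inside $K$ (contributing exactly $\binom{t}{2}$), edges inside the induced subgraph on $V(\Gamma) \setminus K$ (which is still $K_{d+1}$-free, so by the inductive hypothesis contributes at most $\bigl(\frac{d-1}{d}\bigr)\frac{(n-t)^2}{2}$), and edges between $K$ and $V(\Gamma) \setminus K$. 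The crucial observation is that any vertex $v \notin K$ has at most $t-1$ neighbors in $K$, since otherwise $K \cup \{v\}$ would be a clique of size $t+1$, contradicting maximality of $K$. This bounds the cross edges by $(t-1)(n-t)$.

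Summing the three contributions and using the identity $\binom{t}{2} + (t-1)(n-t) = \frac{(t-1)(2n-t)}{2}$, the desired inequality
\[ \binom{t}{2} + (t-1)(n-t) + \Bigl(\frac{d-1}{d}\Bigr)\frac{(n-t)^2}{2} \le \Bigl(\frac{d-1}{d}\Bigr)\frac{n^2}{2} \]
reduces, after subtracting the induction term and canceling a factor of $(2n-t)/2$, to
\[ \frac{t-1}{t} \le \frac{d-1}{d}, \]
which is just a restatement of $t \le d$.

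The main obstacle is identifying the correct ``save'' in the cross-edge count: using maximality of $K$ to replace the trivial bound $t(n-t)$ by $(t-1)(n-t)$. Without this one-edge-per-outside-vertex saving the induction fails; with it, the arithmetic collapses to exactly $t/(t-1) \ge d/(d-1)$, reflecting the sharpness of Turán's bound on balanced complete multipartite graphs with $d$ parts.
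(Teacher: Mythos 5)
Your proof is correct. Note, however, that the paper does not actually supply a proof of Turán's theorem at this point; it simply cites Aigner and Ziegler, which collects several proofs. Your argument is the classical one (essentially Turán's original, and the first of the proofs in Aigner--Ziegler): induct on $n$, remove a maximum clique $K$, bound the remaining graph by the inductive hypothesis, and bound the cross edges using the fact that each outside vertex misses at least one vertex of $K$. All the steps check out. The base case arithmetic $\binom{n}{2}\le\bigl(\frac{d-1}{d}\bigr)\frac{n^2}{2} \iff n\le d$ is right; the identity $\binom{t}{2}+(t-1)(n-t)=\frac{(t-1)(2n-t)}{2}$ is right; and after subtracting the inductive term from both sides one gets $\frac{(t-1)(2n-t)}{2}\le\bigl(\frac{d-1}{d}\bigr)\frac{t(2n-t)}{2}$, so canceling the positive factor $\frac{2n-t}{2}$ gives $\frac{t-1}{t}\le\frac{d-1}{d}$, i.e.\ $t\le d$. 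The only implicit hypotheses are the trivial ones needed for the divisions, namely $t\ge 1$ and $2n-t>0$, both of which hold in the inductive step since $n>d\ge 1$ forces $\Gamma$ to be nonempty and $t\le n$.

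There are other proofs in Aigner--Ziegler (e.g.\ Motzkin--Straus via Lagrangian maximization, the Zykov symmetrization argument, the probabilistic proof, and the weight-shifting proof) that avoid induction entirely, but since the paper only cites the theorem rather than proving it, your choice is as good as any and is arguably the most self-contained for the purposes of this paper.
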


Several proofs of Turan's Theorem are given in Aigner and Ziegler
\cite{Aigner:2018}.

\begin{lem}
  \lemlabel{fewappear} Let $(C,\alpha)$ be a wall cycle and suppose
  that for some $\xi \in (0,1)$ we have
  $d_{\alpha}(u,v) \ge \xi \frac{|C|}{2}$ for every antipodal pair
  $u,v \in C^0$.  Let $W' = \{w \in W \sth |\alpha^{-1}(w)| = 2 \}$.
  Then $|W \setminus W'| \le \frac{1 - \xi}{\xi}|W|$.
\end{lem}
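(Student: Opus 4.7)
The plan is to pinch $|W \setminus W'|$ between an elementary lower bound on $|C|$ coming from counting edges by wall, and an elementary upper bound on $|C|$ coming from the trivial estimate $d_\alpha(u,v) \le |W|$.

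First I would decompose $|C|$ according to color. Since $(C,\alpha)$ is a wall cycle, $|\alpha^{-1}(w)|$ is a positive even integer for every $w \in W$, hence at least $2$; and for $w \in W \setminus W'$ it is not equal to $2$, so it is at least $4$. Summing over colors gives
\[ |C| \;=\; \sum_{w \in W}|\alpha^{-1}(w)| \;\ge\; 2|W'| + 4|W \setminus W'| \;=\; 2|W| + 2|W \setminus W'|, \]
which rearranges to $|W \setminus W'| \le \frac{|C|}{2} - |W|$.

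Next I would use the hypothesis. Any segment joining an antipodal pair $u,v \in C^0$ crosses a subset of $W$, so trivially $d_\alpha(u,v) \le |W|$. Combined with the assumed lower bound,
\[ |W| \;\ge\; d_\alpha(u,v) \;\ge\; \xi \frac{|C|}{2}, \]
so $\frac{|C|}{2} \le \frac{|W|}{\xi}$. Substituting into the previous estimate yields
\[ |W \setminus W'| \;\le\; \frac{|W|}{\xi} - |W| \;=\; \frac{1-\xi}{\xi}|W|, \]
which is the desired bound.

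The proof is almost immediate once both inequalities are written down, so there is no genuine obstacle; the only observation requiring any care is that the definition of wall cycle forces every wall to contribute an even, positive number of edges, so walls outside $W'$ contribute at least four. Everything else is arithmetic.
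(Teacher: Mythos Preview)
Your proof is correct and is essentially the same argument as the paper's: both combine the edge count $|C| \ge 2|W'| + 4|W\setminus W'|$ with the bound $\xi\frac{|C|}{2} \le d_\alpha(u,v) \le |W|$. The paper phrases the first inequality via an auxiliary partition of $C^{(1)}$ into two equal halves, but this is just your direct count divided by two; your presentation is slightly cleaner.
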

\begin{proof}
  Partition $C^{(1)}$ into two sets $S$ and $T$ such that
  $|(\alpha|_S)^{-1}(w)| = |(\alpha|_T)^{-1}(w)|$ for each $w \in W$.
  This is always possible since $|\alpha^{-1}(w)|$ is even for each
  $w \in W$.  Viewing the elements of $S$ as colored by $W$, every
  color appears in $S$ and those colors in $W \setminus W'$ appear at
  least twice.  Hence
  $|W| \le |S| - |W \setminus W'| = \frac{|C|}{2} + |W'| - |W|$.  Let
  $u$ and $v$ be an antipodal pair of vertices.
  Then we have
  \[ \xi\frac{|C|}{2} \le d_{\alpha}(u,v) \le |W| \le \frac{|C|}{2} +
    |W'| - |W| \] and so
  $|W'| \ge |W| - (1 - \xi)\frac{|C|}{2} \ge |W| - \frac{1 -
    \xi}{\xi}|W| = \frac{2\xi - 1}{\xi} |W|$.  Hence we have
  $|W \setminus W'| = |W| - |W'| \le \bigl(1 - \frac{2\xi -
    1}{\xi}\bigr)|W| = \frac{1 - \xi}{\xi}|W|$.
\end{proof}

Let $(C,\alpha)$ be a wall cycle and let $w$ be a wall of
$(C,\alpha)$.  Let $X_w \subset C$ denote the set of all midpoints of
edges colored $w$ and let $\diam X_w$ denote the diameter of $X_w$ as
a metric subspace of $(C,d_C)$.  For a pair of vertices $u,v \in C^0$
we say that $w$ \defterm{contributes} to $\{u,v\}$ if a geodesic
segment from $u$ to $v$ crosses $w$.  Hence $d_{\alpha}(u,v)$ is equal
to the number of walls contributing to $\{u,v\}$.

\begin{lem}
  \lemlabel{diamcontrib} Let $(C,\alpha)$ be a wall cycle and let
  $w \in W$ be a wall such that the number of edges colored $w$ is
  exactly $2$.  Then $w$ contributes to $\{u,v\}$ for exactly
  $\diam X_w$ antipodal pairs of vertices $u,v \in C^0$.
\end{lem}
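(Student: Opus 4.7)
The plan is to exhibit a bijection between the vertices of the shorter arc between $m_1$ and $m_2$ and the contributing antipodal pairs. Let $e_1, e_2$ denote the two edges of $C$ colored by $w$ and let $m_1, m_2$ denote their midpoints, so that $X_w = \{m_1, m_2\}$ and $\diam X_w = d_C(m_1, m_2)$. The pair $\{m_1, m_2\}$ cuts $C$ into two arcs; let $P$ be the shorter and $Q$ the longer, so that $|P| = \diam X_w \le |C|/2 \le |Q|$. Also, $|C|$ is even since every wall color appears an even number of times; write $|C| = 2n$, so antipodes of vertices are again vertices.

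First I would translate ``contributing'' into a combinatorial separation property. Since $|\alpha^{-1}(w)| = 2$, a segment between $u$ and $v$ has an odd number of $w$-edges if and only if it contains exactly one of $e_1, e_2$. Hence $w$ contributes to the unordered pair $\{u,v\}$ if and only if each of the two segments of $C$ joining $u$ and $v$ contains exactly one of $e_1, e_2$, equivalently if and only if $u$ and $v$ lie on opposite arcs among $P$ and $Q$. The key technical point is then a distance bound: any two vertices of $P$ are at $C$-distance at most $|P| - 1 \le n - 1 < n$, so no antipodal pair of vertices can be entirely contained in $P$.

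Combining these, a contributing antipodal pair has exactly one vertex on $P$ and one on $Q$. Conversely, given any vertex $u \in P^0$, its antipode $v$ (at $C$-distance $n$) is forced by the same distance bound to lie on $Q$, yielding a contributing antipodal pair $\{u, v\}$. This sets up a bijection between $P^0$ and the set of contributing antipodal pairs. Finally, counting the vertices on $P$ is immediate: since $P$ begins with a half of $e_1$, ends with a half of $e_2$, and has $|P| - 1$ full edges in between, it passes through exactly $|P|$ vertices. Hence the number of contributing antipodal pairs equals $|P| = \diam X_w$, as required. The only subtlety is the distance bound used on both directions of the bijection; everything else is bookkeeping.
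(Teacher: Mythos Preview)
Your proof is correct and follows essentially the same approach as the paper's. The paper defines $P$ as the combinatorial segment of length $\diam X_w + 1$ beginning and ending with the two $w$-edges and then asserts that $w$ contributes to an antipodal pair $\{u,v\}$ precisely when one of $u,v$ is an interior vertex of this $P$; your arc between the midpoints $m_1,m_2$ has exactly these interior vertices as its vertex set, so the two arguments are the same up to bookkeeping, with yours making the distance bound (ruling out both antipodes lying on the short arc) explicit where the paper leaves it implicit.
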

\begin{proof}
  Let $P \subset C$ be a segment beginning and ending with $w$ of
  length $|P| = \diam X_w + 1$.  Then $w$ contributes to an antipodal
  pair $\{u,v\}$ if and only if one of $u$ or $v$ is an interior
  vertex of $P$ and there are exactly $|P|-1 = \diam X_w$ such pairs.
\end{proof}

\begin{lem}
  \lemlabel{diamints} Let $(C,\alpha)$ be a wall cycle and suppose
  that, for some $\xi \in (0,1)$, we have
  $d_{\alpha}(u,v) \ge \xi\frac{|C|}{2}$ for all antipodal pairs of
  vertices $u,v \in C^0$.  Let $w \in W$ be a wall.  Then $w$
  crosses at least $\diam X_w - 1 - (1-\xi)\frac{|C|}{2}$ walls.
\end{lem}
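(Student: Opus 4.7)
The plan is to combine an analog of \Propref{pqna} for the wall-crossing function $d_{\alpha}$ with a carefully chosen segment witnessing $\diam X_w$.

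First I would establish the ``spread'' inequality
\[ d_{\alpha}(u,v) \ge d_C(u,v) - (1-\xi)\frac{|C|}{2} \]
for all $u,v \in C^0$. The function $d_{\alpha}$ is an integer-valued pseudometric: the triangle inequality $d_{\alpha}(u,v) \le d_{\alpha}(u,w) + d_{\alpha}(w,v)$ holds because the parity of any wall along a concatenated segment $P_1 \cdot P_2$ is the $\bmod\,2$ sum of its parities on $P_1$ and $P_2$, so the walls with odd parity on $P_1 \cdot P_2$ are the symmetric difference of those with odd parity on the pieces. Moreover $d_{\alpha}(u,v) \le d_C(u,v)$, since each wall contributing odd parity to a segment must contribute at least one edge. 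Feeding these two facts together with the antipodal hypothesis into the same triangle-inequality manipulation used in the proof of \Propref{pqna} then yields the spread inequality.

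Next I would apply this to a segment witnessing $\diam X_w$. Choose edges $e,e'$ of color $w$ whose midpoints lie at $d_C$-distance $\diam X_w$ in $C$, and let $P$ be the combinatorial segment of length $\diam X_w + 1$ beginning with $e$ and ending with $e'$, with endpoints $u,v \in C^0$. A short case analysis shows $d_C(u,v) \ge \diam X_w - 1$: in the generic case $\diam X_w + 1 \le |C|/2$ one has $d_C(u,v) = \diam X_w + 1$, while in the extremal case $\diam X_w = |C|/2$ one has $d_C(u,v) = |C|/2 - 1 = \diam X_w - 1$. Combining with the spread inequality yields
\[ d_{\alpha}(u,v) \ge \diam X_w - 1 - (1-\xi)\frac{|C|}{2}. \]

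Finally, any wall $w' \ne w$ having odd parity along $P$ is witnessed by $P$ to cross $w$, because $P$ begins and ends with $w$. Since $d_{\alpha}(u,v)$ equals the number of walls with odd parity in $P$, the number of walls $w' \ne w$ that cross $w$ is at least $d_{\alpha}(u,v)$ whenever $w$ itself has even parity in $P$---which is automatic when $P$ contains no $w$-edges besides $e$ and $e'$. This gives the claimed bound.

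The main technical point to monitor is the case where $P$ contains additional $w$-edges in its interior, so that $w$ itself may have odd parity in $P$ and consume one of the $d_{\alpha}(u,v)$ walls in the count. I expect to address this either by refining the choice of diameter-realizing pair $(e,e')$ to keep the $w$-parity in $P$ even, or by exploiting the extra $+1$ slack available in the non-extremal case $d_C(u,v) = \diam X_w + 1$, which absorbs the parity loss so that the stated bound survives in all cases.
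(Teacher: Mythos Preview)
Your non-extremal case ($\diam X_w + 1 \le |C|/2$) is correct and essentially matches the paper's second case: the spread inequality gives $d_\alpha(u,v) \ge \diam X_w + 1 - (1-\xi)\tfrac{|C|}{2}$, and after subtracting $1$ for a possible odd $w$-parity in $P$ you still meet the stated bound.

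The gap is the extremal case $\diam X_w = |C|/2$, where neither of your proposed fixes works. There is no slack here, since $d_C(u,v) = \diam X_w - 1$ on the nose. And refining the pair cannot be guaranteed to produce even $w$-parity: the two length-$(|C|/2+1)$ segments through a fixed antipodal pair $(e,e')$ have the \emph{same} $w$-parity (their interior edge sets partition the remaining $w$-edges, an even number in total), and switching to another antipodal pair need not help either. Concretely, for $|C|=8$ with $w$ coloring the four edges $0\text{--}1$, $2\text{--}3$, $4\text{--}5$, $6\text{--}7$, every segment of length $5$ beginning and ending with $w$ contains exactly three $w$-edges, so the parity is always odd and your count undershoots by~$1$.

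The repair---and this is exactly what the paper does in its first case---is to choose the vertices differently. Antipodal edges $e,e'$ contain a pair of antipodal \emph{vertices} $u\in e$, $u'\in e'$; the geodesic segment $P'$ from $u$ to $u'$ has length $|C|/2$ and contains exactly one of $e,e'$, say $e$. Applying the antipodal hypothesis directly gives $d_\alpha(u,u') \ge \xi|C|/2$, so at least $\xi|C|/2 - 1$ walls $w'\neq w$ have odd parity in $P'$. Appending $e'$ to $P'$ yields a segment beginning and ending with $w$ in which each such $w'$ retains odd parity, so $w'$ crosses $w$. This gives exactly $\xi|C|/2 - 1 = \diam X_w - 1 - (1-\xi)|C|/2$.
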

\begin{proof}
  Consider first the case where $\diam X_w = \frac{|C|}{2}$.  Then
  there exist a pair of antipodal edges $e$ and $e'$ colored $w$.  Let
  $u \in e$ and $u' \in e'$ be a pair of antipodal vertices and let
  $P \subset C$ be a segment with endpoints $u$ and $u'$.  Note that
  $P$ contains exactly one of $e$ or $e'$.  Without loss of generality
  $P$ contains $e$.  Then, since
  $d_{\alpha}(u,u') \ge \xi\frac{|C|}{2}$, then $P$ must cross at
  least $\xi\frac{|C|}{2} - 1$ walls aside from $w$.  Then the same
  must hold for $P \cup e'$ and so $w$ crosses at least
  $\xi \frac{|C|}{2} - 1 = \diam X_w - 1 - (1-\xi)\frac{|C|}{2}$
  walls.

  Consider now the case $\diam X_w < \frac{|C|}{2}$.  We have a
  geodesic segment $P \subset C$ beginning and ending with $w$ such
  that $|P| = \diam X_w + 1$.  Let $u$ and $v$ be the endpoints of
  $P$, let $u'$ be the antipode of $u$ and let $Q$ be the geodesic
  segment containing $P$ and having endpoints $u$ and $u'$.  Then we
  have
  \begin{align*}
    \xi \frac{|C|}{2} \le d_{\alpha}(u,u') &\le
    d_{\alpha}(u,v) + d_{\alpha}(v,u') \\
    &\le d_{\alpha}(u,v) + d_C(v,u') \\
    &= d_{\alpha}(u,v) + \frac{|C|}{2} - (\diam X_w + 1)
  \end{align*}
  and so we have
  $d_{\alpha}(u,v) \ge \diam X_w + 1 - (1 - \xi)\frac{|C|}{2}$.  But
  $w$ crosses at least $d_{\alpha}(u,v) - 1$ walls and so we are
  done.
\end{proof}

\begin{proof}[Proof of \Thmref{wallcycle}]
  Let $\xi = \bigl(\frac{5d-1}{5d}\bigr)$.  For each vertex pair
  $\{u,v\}$ and each wall $w \in W$, let $\mathbbm{1}_w^{\{u,v\}}$ be
  defined as follows.
  \[ \mathbbm{1}_w^{\{u,v\}} =
    \begin{cases}
      1 & \text{if $w$ contributes to $\{u,v\}$} \\
      0 & \text{otherwise}
    \end{cases}
  \]
  Let $W' \subseteq W$ be the set of walls which color exactly two
  edges of $C$.

  We have
  \[ d_{\alpha}(u,v) = \sum_{w \in W} \mathbbm{1}_w^{\{u,v\}}\] and,
  by \Lemref{diamcontrib}, for $w \in W'$ we have
  \[ \diam X_w = \sum_{\{u,v\}\in A} \mathbbm{1}_w^{\{u,v\}} \] where
  $A$ is the set of antipodal pairs of vertices.  Let $\Gamma$ be the
  simplicial graph with vertex set $W$ and where two walls are joined
  by an edge if they cross.  Then we have
  \begin{align*}
    |&\Gamma^{(1)}| \\ &\ge \frac{1}{2}\sum_{w \in W'} \Bigl(\deg(w)\Bigr)
    \\ &\ge \frac{1}{2}\sum_{w \in W'}\Bigl(\diam X_w - 1 - (1-\xi)\frac{|C|}{2}\Bigr) \\
    &= \frac{1}{2}\sum_{w \in W'}\Bigl(\sum_{\{u,v\} \in A}\mathbbm{1}_w^{\{u,v\}}\Bigr) - \frac{1}{2}|W'| - \frac{1}{2}|W'|(1-\xi)\frac{|C|}{2} \\
    &= \frac{1}{2}\sum_{\{u,v\} \in A}\Bigl(\sum_{w \in W}\mathbbm{1}_w^{\{u,v\}}\Bigr) - \frac{1}{2}\sum_{w \in W \setminus W'}\Bigl(\sum_{\{u,v\} \in A}\mathbbm{1}_w^{\{u,v\}}\Bigr) - \frac{1}{2}|W'| - \frac{1}{2}|W'|(1-\xi)\frac{|C|}{2} \\
    &= \frac{1}{2}\sum_{\{u,v\} \in A}\Bigl(d_{\alpha}(u,v)\Bigr)
    - \frac{1}{2}\sum_{w \in W \setminus W'}\Bigl(\sum_{\{u,v\} \in A}\mathbbm{1}_w^{\{u,v\}}\Bigr) - \frac{1}{2}|W'| - \frac{1}{2}|W'|(1-\xi)\frac{|C|}{2} \\
    &\ge \frac{1}{2} \xi \biggl(\frac{|C|}{2}\biggr)^2
    - \frac{1}{2}|W \setminus W'|\cdot\frac{|C|}{2} - \frac{1}{2}|W'| - \frac{1}{2}|W'|(1-\xi)\frac{|C|}{2} \\
    &\ge \frac{1}{2} \xi |W|^2
    - \frac{1}{2} \Bigl(\frac{1 - \xi}{\xi}\Bigr)|W|\cdot\frac{1}{\xi}|W| - \frac{1}{2}|W| - \frac{1}{2}|W|(1-\xi)\frac{1}{\xi}|W| \\
    &= \Bigl(\xi - \frac{1-\xi}{\xi} - \frac{1 - \xi}{\xi^2} - \frac{1}{|W|} \Bigr)\frac{|W|^2}{2} \\
    &= \Bigl(\xi + 1 - \frac{1}{\xi^2} - \frac{1}{|W|} \Bigr)\frac{|W|^2}{2} \\
    &\ge \Bigl( \xi + 1 - \frac{1}{\xi^2} - \frac{2}{\xi|C|} \Bigr)\frac{|W|^2}{2}
  \end{align*}
  where the second inequality holds by \Lemref{diamints} and the
  second to last inequality holds by \Lemref{fewappear}.  We now
  verify that $4x - 3 \le x + 1 - \frac{1}{x^2}$ for
  $x \in \bigl[\frac{4}{5},1\bigr]$, noting that it suffices to check
  the inequality for $x = \frac{4}{5}$ and $x = 1$ since
  $x \mapsto x + 1 - \frac{1}{x^2}$ is a concave function.  Then,
  since $\xi = \frac{5d-1}{5d} \in \bigl[\frac{4}{5},1\bigr]$ we have
  \[ |\Gamma^{(1)}| \ge \Bigl(4\xi - 3 -
    \frac{2}{\xi|C|}\Bigr)\frac{|W|^2}{2} = \Bigl(\frac{4(5d-1)}{5d} -
    3 - \frac{2}{|C|}\cdot\frac{5d}{5d-1}\Bigr)\frac{|W|^2}{2} \] and,
  since $(C,\alpha)$ is $d$-dimensional, we have
  \[ \frac{4(5d-1)}{5d} - 3 - \frac{2}{|C|}\cdot\frac{5d}{5d-1} \le
    \frac{d-1}{d} \] by Turan's Theorem (\Thmref{turan}).  After some
  rearranging and cancellation this inequality becomes
  $|C| \le \frac{50d^2}{5d-1}$.
\end{proof}

\subsection{Cayley graphs of Coxeter groups}

In this section we use the cubulation of Coxeter groups of Niblo and
Reeves \cite{Niblo:2003} and our result on $\CAT(0)$ cube complexes to
prove that Coxeter groups are strongly shortcut.

Let $\Gamma$ be a simplicial graph on the vertex set
$\{v_1, v_2, \ldots, v_n\}$ with every edge labeled by an integer at
least $2$.  If $\Gamma$ has an edge $e$ from $v_i$ to $v_j$ then let
$m_{ij} = m_{ji}$ denote the label of $e$.  The \defterm{Coxeter
  group} $C_{\Gamma}$ defined by $\Gamma$ is given by the following
presentation
\[ \biggpres{v_1, v_2, \ldots, v_n}{\text{$v_k^2=1$ for all $k$ and
      $(v_iv_j)^{m_{ij}}=1$ for all edges
      $\{v_i,v_j\} \in \Gamma^{(1)}$}} \]

For a Coxeter group $C_{\Gamma}$, Niblo and Reeves \cite{Niblo:2003}
construct a finite dimensional $\CAT(0)$ cube complex into whose
$1$-skeleton the Cayley graph $\Cay(C_{\Gamma},\Gamma^0)$
isometrically embeds.  Hence, since the $1$-skeletons of $\CAT(0)$ cube
complexes are strongly shortcut, we have the following theorem.

\begin{thm}
  \thmlabel{coxs} Coxeter groups are strongly shortcut.
\end{thm}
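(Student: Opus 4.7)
The plan is to invoke the cubulation of Niblo and Reeves together with \Corref{cccs}. Given a finitely generated Coxeter group $C_{\Gamma}$, the Niblo–Reeves construction provides a finite dimensional $\CAT(0)$ cube complex $X$ together with an isometric embedding $\iota \colon \Cay(C_{\Gamma},\Gamma^0) \hookrightarrow X^1$ of the standard Cayley graph into the $1$-skeleton of $X$. By \Corref{cccs}, $X^1$ is strongly shortcut, so there exist $\theta \in \N$ and $\xi \in (0,1)$ such that every $\xi$-almost isometric cycle in $X^1$ has length at most $\theta$.

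The next step is the observation that an isometric embedding of graphs pulls back the strong shortcut property. If $f \colon C \to \Cay(C_{\Gamma},\Gamma^0)$ is a $\xi$-almost isometric cycle, then for every antipodal pair $p,q \in C$ we have
\[ d_{X^1}\bigl(\iota\comp f(p), \iota\comp f(q)\bigr) = d_{\Cay}\bigl(f(p),f(q)\bigr) \ge \xi \frac{|C|}{2}, \]
so $\iota \comp f$ is a $\xi$-almost isometric cycle in $X^1$. The bound coming from $X^1$ then forces $|C| \le \theta$, showing that $\Cay(C_{\Gamma},\Gamma^0)$ itself is strongly shortcut. Since $C_{\Gamma}$ acts properly and cocompactly on its standard Cayley graph, it follows by definition that $C_{\Gamma}$ is strongly shortcut.

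There is essentially no analytic obstacle here: the hard work is all in the Niblo–Reeves cubulation and in \Corref{cccs}. The only point to be careful about is that the Niblo–Reeves embedding is truly isometric (not merely bilipschitz or quasi-isometric), which is what allows a $\xi$-almost isometric cycle to be pulled back with the same $\xi$; anything weaker would require reproving a shortcut-style inequality from scratch.
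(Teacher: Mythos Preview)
Your proposal is correct and takes essentially the same approach as the paper: invoke the Niblo--Reeves isometric embedding of $\Cay(C_{\Gamma},\Gamma^0)$ into the $1$-skeleton of a finite dimensional $\CAT(0)$ cube complex, apply \Corref{cccs}, and conclude. You have simply made explicit the step the paper leaves implicit, namely that an isometric embedding of graphs pulls back $\xi$-almost isometric cycles with the same $\xi$, so the bound $\theta$ transfers directly.
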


\subsection{Systolic and quadric complexes}

In this section we will prove that the $1$-skeletons of systolic and
quadric complexes are strongly shortcut.  To do so we will rely on
\Corref{cccs}, the characterizations of disk diagrams in systolic and
quadric complexes and a theorem about transforming $2$-dimensional
systolic complexes into quadric complexes.

A \defterm{bridged graph} is a connected graph whose isometric cycles
all have length three \cite{Soltan:1983}.  Bandelt characterized
\defterm{hereditary modular graphs} as those connected graphs whose
isometric cycles all have length four \cite{Bandelt:1988}.  Chepoi
characterized \defterm{systolic complexes} as the flag simplicial
completions of bridged graphs \cite{Chepoi:2000} and we will use this
as a definition here.  The present author characterized
\defterm{quadric complexes} as those square complexes obtained from
hereditary modular graphs by gluing in a square along each embedded
$4$-cycle \cite{Hoda:2017} and we will use this as a definition here.

We require the following lemmas concerning disk diagrams in systolic
and quadric complexes.

\begin{lem}[Chepoi {\cite[Theorem~8.1, Claim~1]{Chepoi:2000}}]
  \lemlabel{systolic_disk_diag} Every cycle in a systolic complex has
  a systolic disk diagram.
\end{lem}

\begin{lem}[{\cite[Lemma~1.6]{Hoda:2017}}]
  \lemlabel{quadric_disk_diag} Every cycle in a quadric complex has a
  $\CAT(0)$ square complex disk diagram.
\end{lem}

The present author and Osajda proved the following theorem
\cite{Hoda:systolic_property_a}, which we also need.

\begin{thm}[{\cite[Theorem~3.2]{Hoda:systolic_property_a}}]
  \thmlabel{squarification} Let $\Gamma$ be the $1$-skeleton of a
  $2$-dimensional systolic complex.  Let $v \in \Gamma^0$ be a vertex.
  Let $\Gamma'$ be the graph obtained from $\Gamma$ by deleting every
  edge whose endpoints are equidistant to $v$.  Then $\Gamma'$ is the
  $1$-skeleton of a quadric complex.
\end{thm}

\begin{lem}
  \lemlabel{cat0ify} Let $D \subset \R^2$ be a systolic disk diagram.
  Let $v \in \bd D$ be a vertex on the boundary of $D$.  Let $\Gamma$
  be the plane graph obtained from the $1$-skeleton $D^1$ by deleting
  every edge whose endpoints are equidistant to $v$.  Then the square
  complex $D_{-} \subset \R^2$ obtained from $\Gamma$ by including any
  planar region bounded by an embedded $4$-cycle of $\Gamma$ is a
  $\CAT(0)$ square complex disk diagram.
\end{lem}
\begin{proof}
  By \Thmref{squarification}, so long as every embedded $4$-cycle of
  $\Gamma$ bounds a planar region of $\R^2$, the square complex
  $D_{-}$ is quadric.  But quadric complexes are simply connected, by
  \Lemref{quadric_disk_diag} and planar quadric complexes are
  $\CAT(0)$ so proving that every embedded $4$-cycle $C$ of $\Gamma$
  bounds a planar region of $\R^2$ will suffice to prove the lemma.
  But such a $C$ is an embedded $4$-cycle of $D^1$ which is a bridged
  graph so some antipodal pair of vertices of $C$ are joined by an
  edge $e$ in $D^1$ making $C \cup \{e\}$ the union of two $3$-cycles
  $C_1$ and $C_2$ with $C_1 \cap C_2 = \{e\}$.  Then, since $D$ is a
  flag simplicial complex, the $3$-cycles $C_1$ and $C_2$ bound planar
  regions and thus so does $C$.
\end{proof}

Let $D \subset \R^2$ be a simplicial disk diagram whose boundary
$\bd D$ is an embedded cycle.  Let Let $v \in \bd D$ be a vertex and
let $E$ be the set of edges of $\bd D$ whose endpoints are equidistant
to $v$ in $D^1$.  Let $D_{+} \subset \R^2$ be the disk diagram
obtained from $D$ by adding a new triangle $T_e$ to $D$ for each
$e \in E$ by identifying $e$ with an edge of $T_e$.  Notice that $D^1$
is convex in $D_{+}^1$.  Thus no boundary edge of $D_{+}$ has
endpoints that are equidistant to $v$ in $D_{+}^1$ and if $D$ is
systolic then so is $D_{+}$.

\begin{lem}
  \lemlabel{adding_ears} Let $D \subset \R^2$ be a systolic disk
  diagram whose boundary $\bd D$ is a $K$-bilipschitz embedded cycle
  $\bd D$ of $D^1$.  Let $v \in \bd D$ be a vertex and let $E$ be the
  set of edges of $\bd D$ whose endpoints are equidistant to $v$ in
  $D^1$.  Then at most $|E| \le \frac{K-1}{K}\cdot |\bd D| + 1$.

  Consequently, if $|\bd D| \ge \frac{2K(2K + 3)}{K-1}$ then the boundary
  $\bd D_{+}$ of the disk diagram $D_{+} \subset \R^2$ obtained from
  $D$ as above is an embedded $\frac{3-2K}{K^2}$-almost isometric
  cycle.
\end{lem}
\begin{proof}
  One of the two embedded paths in $\bd D$ from $v$ to its antipode
  $\bar v$ contains at least $\bigl\lceil\frac{|E|-1}{2}\bigr\rceil$
  edges of $E$.  But then
  $d_{D^1}(v, \bar v) \le \frac{|\bd D|}{2} -
  \bigl\lceil\frac{|E|-1}{2}\bigr\rceil \le \frac{|\bd D|}{2} -
  \frac{|E|-1}{2}$, which by $K$-bilipschitz embeddedness implies
  $\frac{1}{K}\cdot\frac{|\bd D|}{2} \le \frac{|\bd D|}{2} -
  \frac{|E|-1}{2}$.  After rearranging we obtain
  $|E| \le \frac{K-1}{K}\cdot |\bd D| + 1$, as required.

  Now let $D_{+} \subset \R^2$ be obtained from $D$ as above.  Let
  $p, \bar p \in \bd D_{+}$ be antipodal.  There exist vertices
  $u, v \in \bd D \cap \bd D_{+}$ with $d_{\bd D_{+}}(u,p) \le 1$ and
  $d_{\bd D_{+}}(v,\bar p) \le 1$.  Then
  $d_{\bd D_{+}}(u,v) \ge \frac{|\bd D_{+}|}{2} - 2$ but
  $d_{\bd D_{+}}(u,v) \le d_{\bd D}(u,v) + |E| \le d_{\bd D}(u,v) +
  \frac{K-1}{K}\cdot |\bd D| + 1 \le d_{\bd D}(u,v) +
  \frac{K-1}{K}\cdot |\bd D_{+}| + 1$ so
  $d_{\bd D}(u,v) \ge \frac{|\bd D_{+}|}{2} - 2 - \frac{K-1}{K}\cdot
  |\bd D_{+}| - 1 = \frac{2 - K}{2K} \cdot |\bd D_{+}| - 3$.  Hence by
  $K$-bilipschitz embeddedness of $\bd D$ in $D^1$, by convexity of
  $D^1$ in $D_{+}^1$ and by
  $|\bd D_{+}| \ge |\bd D| \ge \frac{2K(2K + 3)}{K-1}$, we have
  \begin{align*}
    d_{D_{+}^1}(p, \bar p)
    &\ge d_{D_{+}^1}(u,v) - 2 \\
    &= d_{D^1}(u,v) - 2 \\
    &\ge \frac{1}{K} d_{\bd D}(u,v) - 2 \\
    &\ge \frac{2 - K}{2K^2} \cdot |\bd D_{+}| - \frac{3}{K} - 2 \\
    &= \frac{2 - K}{2K^2} \cdot |\bd D_{+}| - \frac{K(2K + 3)}{K^2} \\
    &= \Bigl(\frac{2 - K}{K^2} - \frac{2K(2K + 3)}{K^2|D_{+}|}\Bigr) \cdot
      \frac{|\bd D_{+}|}{2} \\
    &\ge \Bigl(\frac{2 - K}{K^2} - \frac{K-1}{K^2}\Bigr) \cdot
      \frac{|\bd D_{+}|}{2} \\
    &= \frac{3 - 2K}{K^2} \cdot \frac{|\bd D_{+}|}{2}
  \end{align*}
  as required.
\end{proof}

\begin{thm}
  \thmlabel{sys_quad_ss} The $1$-skeletons of systolic and quadric
  complexes are strongly shortcut.
\end{thm}
\begin{proof}
  Let $X$ be a systolic or quadric complex.  For the sake of finding a
  contradiction, assume that $X^1$ is not strongly shortcut.  Then
  there exists a sequence $(f_n \colon C_n \to X^1)_{n \in \N}$ of
  cycles such that $f_n$ is an $\frac{n+1}{n}$-bilipschitz embedding
  and $|C_n| \ge n$.  We will use the $f_n$ to construct a sequence of
  spaces $(D_n)_n$ such that
  \begin{enumerate}
  \item $D_n \subset \R^2$ is a $\CAT(0)$ square complex disk diagram, and
  \item $\bd D_n$ is an embedded $\xi_n$-almost isometric cycle in
    $D_n^1$,
  \end{enumerate}
  where $|\bd D_n| \to \infty$ and $\xi_n \to 1$ as $n \to \infty$.
  This will suffice since, by taking the wedge of the $D_n$, we
  contradict \Corref{cccs}.

  In the case where $X$ is quadric, let $\bar f_n \colon D_n \to X$ be
  the $\CAT(0)$ square complex disk diagram for $f_n$ guaranteed by
  \Lemref{quadric_disk_diag}.  Then, since $f_n = \bar f_n|_{\bd D_n}$
  is an $\frac{n+1}{n}$-bilipschitz embedding and $\bar f_n|_{D_n^1}$
  is $1$-Lipschitz, the inclusion $\bd D_n \hookrightarrow D_n^1$ is
  $\frac{n+1}{n}$-bilipschitz and so is an $\frac{n}{n+1}$-almost
  isometric embedding of a cycle of length $|\bd D_n| = |C_n| \ge n$.

  In the case where $X$ is systolic, we will need a slightly more
  sophisticated argument.  Pick a subsequence
  $(f_{n_k} \colon C_{n_k} \to X^1)_{k \in \N}$ such that $f_{n_k}$ is
  $K_k$-bilipschitz with $K_k = \frac{k+1}{k}$ and
  $|C_{n_k}| \ge \frac{2K_k(2K_k+3)}{K_k-1}$.  Let
  $\bar f_{n_k} \colon D_{n_k} \to X$ be the systolic disk diagram for
  $f_n$ guaranteed by \Lemref{systolic_disk_diag}.  Then $D_{n_k}$
  satisfies the conditions of \Lemref{adding_ears} with $K = K_k$,
  thus we obtain a systolic disk diagram $D_{n_k,+} \subset \R^2$ from
  $D_{n_k}$, as above, whose boundary $\bd D_{n_k,+}$ is an embedded
  $\frac{3-2K_k}{K_k^2}$-almost isometric cycle of length
  $|\bd D_{n_k,+}| \ge |\bd D_{n_k}| = |C_{n_k}| \ge
  \frac{2K_k(2K_k+3)}{K_k-1}$.  Moreover, for some vertex
  $v_k \in \bd D_{n_k,+}$ no edge $e$ of $\bd D_{n_k,+}$ has endpoints
  that are equidistant to $v_k$ in $D_{n_k,+}^1$.

  Let $D_{n_k,-} \subset \R^2$ be the $\CAT(0)$ square complex disk
  diagram obtained from $D_{n_k,+}$ as in the statement of
  \Lemref{cat0ify} with $v$ set to $v_k$.  Then
  $\bd D_{n_k,-} = \bd D_{n_k,+}$ and
  $D_{n_k,-}^1 \subset D_{n_k,+}^1$ so $\bd D_{n_k,-}$ is an embedded
  $\frac{3-2K_k}{K_k^2}$-almost isometric cycle of length
  $|\bd D_{n_k,-}| \ge \frac{2K_k(2K_k+3)}{K_k-1}$ in $D_{n_k,-}$ but
  $K_k \to 1$ so $\frac{3-2K_k}{K_k^2} \to 1$ and
  $\frac{2K_k(2K_k+3)}{K_k-1} \to \infty$ as $k \to \infty$, thus the
  $(D_{n_k,-})_k$ are as required.
\end{proof}

\begin{cor}
  \corlabel{sys_quad_is_shortcut} Systolic and quadric groups are
  strongly shortcut.
\end{cor}

Wise proved that finitely presented $C(6)$ small cancellation groups
are systolic \cite{Wise:2003} and the present author proved that
finitely presented $C(4)$-$T(4)$ small cancellation groups are quadric
\cite{Hoda:2017} so we have the following corollary.

\begin{cor}
  \corlabel{sc_is_shortcut} Finitely presented $C(6)$ and
  $C(4)$-$T(4)$ small cancellation groups are strongly shortcut.
\end{cor}

\subsection{Cayley graphs of \texorpdfstring{$\Z$}{Z} and
  \texorpdfstring{$\Z^2$}{Z\texttwosuperior}}

We have shown that the $1$-skeletons of $\CAT(0)$ cube complexes are
strongly shortcut.  In particular, the standard Cayley graphs of the
finitely generated free abelian groups are strongly shortcut.  In this
section we will strengthen this result for $\Z$ and $\Z^2$ by showing
that all of their Cayley graphs are strongly shortcut.

\begin{lem}
  \lemlabel{useplank} Let $\Gamma$ be a graph and suppose there is a
  continuous $(K,M)$-quasi-isometric embedding
  $\iota \colon \Gamma \to \R^2$.  Let $\xi \in (0,1)$ and let
  $f \colon C \to \Gamma $ be a $\xi$-almost isometric cycle.  Suppose
  the image of $\iota \comp f$ is contained in the $N$-neighborhood
  of a line $L \subset \R$.  Then $|C| \le \frac{2K}{\xi}(M+2N)$.
\end{lem}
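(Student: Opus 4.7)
The plan is to exploit the planar plank condition to find an antipodal pair on $C$ whose $\iota \circ f$-images are close in $\R^2$, and then transport this closeness back to $\Gamma$ via the quasi-isometric embedding to contradict $\xi$-almost isometry when $|C|$ is large.

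First I would set up the projection. Let $\pi \colon \R^2 \to L$ be the nearest-point projection onto the line $L$, and consider the continuous composition $g = \pi \comp \iota \comp f \colon C \to L$. Since $C$ is a circle, the usual one-dimensional intermediate value argument applies: for each $p \in C$ with antipode $p'$, set $\phi(p) = g(p) - g(p')$; then $\phi$ is continuous and satisfies $\phi(p') = -\phi(p)$, so $\phi$ vanishes at some $p \in C$. This produces antipodal $p, q \in C$ with $g(p) = g(q)$.

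Next I would bound the $\R^2$-distance. Because $\iota \comp f(C)$ lies in the $N$-neighborhood of $L$, each of $\iota(f(p))$ and $\iota(f(q))$ is within $N$ of $L$, and since they project to the same point of $L$, the triangle inequality gives $d_{\R^2}\bigl(\iota(f(p)), \iota(f(q))\bigr) \le 2N$. Applying the $(K,M)$-quasi-isometric embedding inequality
\[ \tfrac{1}{K} d_{\Gamma}\bigl(f(p), f(q)\bigr) - M \le d_{\R^2}\bigl(\iota(f(p)), \iota(f(q))\bigr) \]
then yields $d_{\Gamma}\bigl(f(p), f(q)\bigr) \le K(M + 2N)$.

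Finally, since $p, q$ are antipodal in $C$ and $f$ is $\xi$-almost isometric, we have $\xi \tfrac{|C|}{2} \le d_{\Gamma}\bigl(f(p), f(q)\bigr) \le K(M+2N)$, which rearranges to $|C| \le \frac{2K}{\xi}(M + 2N)$. There is no real obstacle here; the only delicate point is the intermediate value step, which is standard for continuous maps from a circle to $\R$, and it is used in an essential way to convert the one-dimensional "thin strip" hypothesis into a genuine distance bound for an antipodal pair.
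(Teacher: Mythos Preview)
Your proof is correct and follows essentially the same approach as the paper: find antipodal points projecting to the same point of $L$ via an intermediate value argument, bound their image distance by $2N$, and pull back through the quasi-isometric embedding and the $\xi$-almost isometric condition. The paper's proof is terser (it just says ``by continuity'' for the existence of the antipodal pair), but the logic is identical.
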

\begin{proof}
  By continuity, for some pair of antipodal points $p,q \in C$, the
  points $\iota \comp f(p)$ and $\iota \comp f(q)$ project
  perpendicularly to the same point of $L$.  Then
  \[ \frac{1}{K} d_{\Gamma}\bigl(f(p), f(q)\bigr) - M \le d_{\R}\bigl(\iota \comp f(p), \iota \comp f(q)\bigr) \le 2N \]
  and so we have
  \[ \xi \frac{|C|}{2} \le d_{\Gamma}\bigl(f(p),f(q)\bigr) \le
    K(M+2N) \] and so we have $|C| \le \frac{2K}{\xi}(M+2N)$.
\end{proof}

Since the inclusion map $\Z \hookrightarrow \R \times \{0\}$ extends
to a continuous quasi-isometric embedding from any Cayley graph of
$\Z$, we obtain as a corollary of \Lemref{useplank} the following
theorem.

\begin{thm}
  \thmlabel{cayz} Every Cayley graph of $\Z$ is strongly shortcut.
\end{thm}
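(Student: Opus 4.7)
The plan is to reduce the theorem directly to \Lemref{useplank} by producing, for any Cayley graph $\Gamma = \Cay(\Z, S)$, a continuous quasi-isometric embedding into $\R^2$ whose image lies on the line $L = \R \times \{0\}$, so that one may take $N = 0$.

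The construction of $\iota \colon \Gamma \to \R^2$ is the obvious one. On vertices set $\iota(n) = (n,0)$, and on each edge from $n$ to $n+s$, $s \in S$, extend $\iota$ linearly to the straight segment from $(n,0)$ to $(n+s,0)$ in $L$. This is continuous by construction, and its image lies in $L$, so any cycle $f \colon C \to \Gamma$ satisfies that $\iota \comp f$ is contained in the $0$-neighborhood of $L$.

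Next I would check that $\iota$ is a $(K,M')$-quasi-isometric embedding for constants depending only on $S$. Let $M = \max_{s \in S}|s|$. Since each edge of $\Gamma$ has length $1$ but maps to a segment of length at most $M$ in $\R^2$, one gets $d_{\R^2}\bigl(\iota(x),\iota(y)\bigr) \le M\, d_\Gamma(x,y)$ for all $x,y \in \Gamma$. For the other direction, $S$ generates $\Z$ and so $\gcd(S) = 1$; hence there exist integers $a_i$ with $\sum_i a_i s_i = 1$, giving a word of length $A = \sum_i |a_i|$ that represents $1$. Concatenating copies of this word (or of its inverse) shows that the word length of any $n \in \Z$ satisfies $|n|_S \le A|n|$, so for vertices $m,n$ we get $d_\Gamma(m,n) \le A\, d_{\R^2}\bigl(\iota(m),\iota(n)\bigr)$. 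Extending from vertices to arbitrary points costs only a bounded additive constant, so $\iota$ is indeed a continuous $(K,M')$-quasi-isometric embedding for suitable $K$ and $M'$.

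With $\iota$ in hand, an application of \Lemref{useplank} with $N = 0$ gives, for any $\xi$-almost isometric cycle $f \colon C \to \Gamma$, the uniform bound $|C| \le \frac{2K}{\xi}M'$. Fixing any $\xi \in (0,1)$ this bounds the lengths of $\xi$-almost isometric cycles of $\Gamma$, so $\Gamma$ is strongly shortcut. The only real obstacle is the quasi-isometry check, and that is essentially the standard fact that word metrics on $\Z$ coming from different finite generating sets are all bi-Lipschitz to the usual metric; no subtler argument is required because the one-dimensionality of $L$ collapses the $N$ in \Lemref{useplank} to zero.
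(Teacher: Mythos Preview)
Your proposal is correct and follows essentially the same approach as the paper: extend the inclusion $\Z \hookrightarrow \R \times \{0\}$ to a continuous quasi-isometric embedding of the Cayley graph and invoke \Lemref{useplank} with $N = 0$. The paper states this in a single sentence without justifying the quasi-isometry, whereas you have spelled out the standard bi-Lipschitz estimate between the word metric and the Euclidean metric; the added detail is fine and the argument is sound.
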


The Cayley graphs of $\Z$ are all quasi-isometric to $\R$ and so are
hyperbolic.  Thus \Thmref{cayz} also follows from \Thmref{hypgraph}.
In the remainder of this section we will prove the strong shortcut
property for Cayley graphs of $\Z^2$ where we cannot rely on
hyperbolicity.  In fact, we cannot even rely on the quasi-isometry
type of $\Z^2$ as the following example makes clear.

\begin{ex}\exlabel{not_qi_invariant}
  Let $\Gamma$ be the standard Cayley graph of $\Z^2$.  For each
  $n \in \N$, let $A_n$ be the induced subgraph on
  $\{0, 1, \ldots, n\}^2$, let $P_n$ be the induced subgraph on
  $\{0, 1, \ldots, n\} \times \{0\}$ and let $Q_n$ be the induced
  subgraph on $\{0\} \times \{0, 1, \ldots, n\}$.  Then
  $C_n = P_n \cup Q_n \cup \bigl(P_n + (0,n)\bigr) \cup \bigl(Q_n +
  (n,0)\bigr)$ is the embedded cycle that ``bounds'' $A_n$.  Note that
  the $A'_n = A_n + \Bigl(\frac{n(n+1)}{2},\frac{n(n+1)}{2}\Bigr)$ are
  disjoint.  Let
  $C'_n = C_n + \Bigl(\frac{n(n+1)}{2},\frac{n(n+1)}{2}\Bigr)$ and let
  $\Gamma'$ be the graph obtained from $\Gamma$ by subdividing the
  edges of each $A'_n \setminus C'_n$.  Then the $C'_n$ are
  isometrically embedded in $\Gamma'$.  Thus $\Gamma$ is not shortcut
  and yet $\Gamma'$ is quasi-isometric to $\Gamma$ and thus to $\R^2$.
\end{ex}

Let $S$ be a generating set of $\Z^2$, let $\Gamma$ be the Cayley
graph of $(\Z^2,S)$ and let $\iota \colon \Gamma \to \R^2$ be the
$(K,M)$-quasi-isometry obtained by extending the inclusion map
$\Z^2 \hookrightarrow \R^2$ to $\Gamma$ in such a way that the
restriction of $\iota$ to each edge is a constant-speed geodesic.

\begin{lem}
  \lemlabel{getplank} Let $f \colon C \to \Gamma$ be a $\xi$-almost
  isometric embedding.  For some constants $A$ and $B$ depending only
  on $S$, there is a line in $\R^2$ whose
  $\bigl((1-\xi)A|C| + B\bigr)$-neighborhood contains the image of
  $\iota \comp f$.
\end{lem}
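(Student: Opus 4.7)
The plan is to let the line $L$ pass through the $\iota$-images of an antipodal pair of the cycle, and to bound the perpendicular deviation of the remainder of the cycle using the polyhedral structure of the word metric on $\Z^2$.

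Fix an antipodal pair $p, q \in C$ with $d_C(p, q) = |C|/2$, set $x = \iota \comp f(p)$ and $y = \iota \comp f(q)$, and let $L$ be the line through $x$ and $y$ in $\R^2$.  The $\xi$-almost isometric hypothesis $d_\Gamma(f(p), f(q)) \ge \xi|C|/2$ combined with the quasi-isometry estimates for $\iota$ gives $|x - y|_2 \ge \xi|C|/(2K) - M$, so $x \ne y$ and $L$ is well defined.  For any other point $z = \iota \comp f(t)$ on the cycle, letting $d_1 = d_C(p, t)$ and $d_2 = |C|/2 - d_1$ along the arc from $p$ to $q$ containing $t$, the trivial bound $d_\Gamma \le d_C$ gives $d_\Gamma(f(p), f(t)) + d_\Gamma(f(t), f(q)) \le d_1 + d_2 = |C|/2$.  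Translating to the polyhedral word norm $\|\cdot\|_S$ on $\R^2$ (which coincides with $d_\Gamma$ under $\iota$ up to bounded additive error), the triangle inequality excess is $\|x - z\|_S + \|z - y\|_S - \|x - y\|_S \le (1-\xi)|C|/2 + O(1)$.

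The main geometric step is to show that for $x, z, y \in \R^2$ with $\|x - z\|_S + \|z - y\|_S \le \|x - y\|_S + \epsilon$, the Euclidean perpendicular distance from $z$ to the line through $x$ and $y$ is at most $C_1 \epsilon + C_2$ for constants $C_1, C_2$ depending only on $S$, provided $y - x$ lies in the cone over a facet of the unit ball of $\|\cdot\|_S$.  Combining this with the lower bound $|x - y|_2 \ge \xi|C|/(2K) - M$ converts the $(1-\xi)|C|/2$ excess into a Euclidean width bound of the form $(1-\xi)A|C| + B$, completing the proof.

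The main obstacle is the polyhedral-norm geometric step: the ``norm ellipse'' $\{z : \|x - z\|_S + \|z - y\|_S \le \|x - y\|_S + \epsilon\}$ may extend in Euclidean perpendicular direction by much more than $\epsilon$ when $y - x$ points toward a vertex of the unit ball of $\|\cdot\|_S$---for instance, in the $\ell^1$ norm with diagonal foci the zero-excess ellipse is a whole rectangle.  To handle this one perturbs the choice of antipodal pair $(p, q)$ (or equivalently the direction of $L$) so that $y - x$ lies in the cone over a facet of the unit ball, with the bounded perturbation absorbed into the additive constant $B$.
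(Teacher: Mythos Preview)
Your approach has a genuine gap: the geometry of polyhedral ``norm ellipses'' is backwards from what you assert.  For the $\ell^1$ norm, the unit ball has vertices at $(\pm 1,0),(0,\pm 1)$, so the diagonal direction $(1,1)$ lies in the \emph{interior of a facet}, not at a vertex.  Your own example (diagonal foci, rectangular ellipse) therefore shows that facet directions are precisely where the zero-excess set is wide, so perturbing $y-x$ into a facet cone is exactly the wrong fix.  Conversely, when $y-x$ points toward a vertex $v$ of the unit ball, the $\epsilon$-ellipse \emph{is} thin: e.g.\ for $\ell^1$ with $x=(0,0)$, $y=(n,0)$ one gets $2|z_2|\le\epsilon$.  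So your ``main geometric step'' is false as stated, and the proposed perturbation does not salvage it.  Even with the roles of vertex and facet swapped, you would still need to explain how to find an antipodal pair whose images are aligned with a chosen vertex direction; a generic perturbation will not do this, and as the direction approaches a facet the constant $C_1$ blows up.

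The paper's argument avoids the norm-ellipse picture entirely.  It fixes in advance the generator $t\in S$ of maximal Euclidean length and lets $V=\R t$; the key inequality is that every other $s\in S$ has strictly smaller projection onto $t$ (so $\alpha:=\max_{s\ne\pm t}|s_t|<|t|$).  By the intermediate value theorem there is an antipodal pair $p,q\in C$ with $\iota f(p)-\iota f(q)\in V$.  The $\xi$-almost isometric bound forces $d_\Gamma\bigl(f(p),f(q)\bigr)\ge\xi|C|/2$, hence at least roughly $\xi|C|/2$ many $t$-steps are needed to join them in $\Gamma$.  Projecting a half of $C$ onto $V$, each $t$-edge contributes $|t|$ and each non-$t$-edge at most $\alpha$; since $\alpha<|t|$ this forces all but $O\bigl((1-\xi)|C|\bigr)$ of the $|C|/2$ edges to be $t$-edges.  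The perpendicular drift is then bounded by the number of non-$t$-edges times $\max_s|s|$, giving the $(1-\xi)A|C|+B$ bound with the line $L=\iota f(p)+V$.  In effect the paper \emph{does} choose a vertex direction of the $\|\cdot\|_S$-ball (the longest generator is extremal), but rather than invoking an abstract ellipse estimate it counts edge types directly.
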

\begin{proof}
  For $x \in \R^2$ let $|x|$ denote the standard Euclidean norm of
  $x$.  Let $t \in S$ achieve
  $|t| = \max\bigl\{|s| \sth s \in S\bigr\}$.  Let $V$ be the
  $1$-dimensional vector subspace of $\R^2$ generated by $t$.  For
  $s \in S$ let $s_t$ be the perpendicular projection of $s$ onto $V$
  and let
  $\alpha = \max\bigl\{|s_t| \sth s \in S \setminus \{\pm t\}\bigr\}$.
  Then $\alpha < |t|$.

  By continuity, some pair of antipodal points $p,q \in C$ satisfy
  $\iota \comp f(p) - \iota \comp f(q) \in V$.  Pick $u \in \Z^2$ such
  that $|u-\iota\comp f(p)| \le 1$.  Then for some $r \in \Z$, we have
  $\bigl|u+rt - \iota\comp f(q)\bigr| \le 1 + |t|$.  Then
  $d_{\Gamma}\bigl(u,f(p)\bigr) \le K(M+1)$ and
  $d_{\Gamma}\bigl(u+rt,f(q)\bigr) \le K\bigl(M + 1 +|t|\bigr)$.
  Hence we have
  \begin{align*}
    \xi \frac{|C|}{2} &\le d_{\Gamma}\bigl(f(p),f(q)\bigr) \\
                      &\le d_{\Gamma}\bigl(f(p), u\bigr)
    + d_{\Gamma}(u,u+rt) + d_{\Gamma}\bigl(u+rt,f(q)\bigr) \\
    &\le |r| + K\bigl(2M+2 + |t|\bigr)
  \end{align*}
  and so $|r| \ge \xi\frac{|C|}{2} - K\bigl(2M+2 + |t|\bigr)$.  Let
  $P \subset C$ be a segment with endpoints $p$ and $q$.  Each edge of
  $\Gamma$ is labeled by a generator in $S$.  Pull these labels back
  to $C$ under $f$.  Let $T$ be the union of all the $t$-labeled edges
  of $C$ and let $\ell$ be the total length of the segments of
  $T \cap P$.  Consider the projection of the path $\iota \comp f|_P$
  onto the line $\iota \comp f(p) + V$.  It has arclength at most
  $\ell|t| + \bigl(\frac{|C|}{2} - \ell\bigr)\alpha$.  But the
  endpoints of $\iota \comp f|_P$ are $\iota \comp f(p)$ and
  $\iota \comp f(q)$, which are of distance at least
  $\bigl(|r|-1\bigr)|t| - 2$ apart and so
  $\bigl(|r|-1\bigr)|t| - 2 \le \ell|t| + \bigl(\frac{|C|}{2} -
  \ell\bigr)\alpha$.  Combining this inequality with
  $|r| \ge \xi\frac{|C|}{2} - K\bigl(2M+2 + |t|\bigr)$ we have
  \[ \biggl(\xi \frac{|C|}{2} - K\bigl(2M + 2 + |t|\bigr)-1\biggr)|t| -
    2 \le \ell|t| + \biggl(\frac{|C|}{2} - \ell\biggr)\alpha \] which,
  after some manipulation gives
  \[ \ell \ge \biggl(\frac{\xi|t| - \alpha}{|t| -
      \alpha}\biggr)\frac{|C|}{2} - \frac{K\bigl(2M+2+|t|\bigr)|t| +
      |t| + 2}{|t| - \alpha} \] and so we have the following:
  \[ |P \setminus T| = \frac{|C|}{2} - \ell \le \biggl(\frac{(1 -
      \xi)|t|}{|t| - \alpha}\biggr)\frac{|C|}{2} +
    \frac{K\bigl(2M+2+|t|\bigr)|t| + |t| + 2}{|t| - \alpha} \] But
  then the projection of $\iota \comp \alpha|_P$ to $V^{\perp}$ must
  have length at most
  \[ \biggl(\frac{(1 - \xi)|t|^2}{|t| - \alpha}\biggr)\frac{|C|}{2} +
    \frac{K\bigl(2M+2+|t|\bigr)|t|^2 + |t|^2 + 2|t|}{|t| - \alpha} \]
  and so the image of $\iota \comp f|_P$ must be contained in a
  neighborhood of radius
  \[ \biggl(\frac{(1 - \xi)|t|^2}{2\bigl(|t| -
      \alpha\bigr)}\biggr)\frac{|C|}{2} +
    \frac{K\bigl(2M+2+|t|\bigr)|t|^2 + |t|^2 + 2|t|}{2\bigl(|t| -
      \alpha\bigr)} \] about the line $\iota \comp f(p) + V$.  Then
  the lemma holds with $A = \frac{|t|^2}{4(|t| - \alpha)}$
  and $B = \frac{K(2M+2+|t|)|t|^2 + |t|^2 + 2|t|}{2(|t| - \alpha)}$.
\end{proof}

\begin{thm}
  \thmlabel{cayzz} Every Cayley graph of $\Z^2$ is strongly shortcut.
\end{thm}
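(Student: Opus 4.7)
The plan is to combine Lemma \lemref{getplank} and Lemma \lemref{useplank} directly. The two lemmas are designed to fit together: Lemma \lemref{getplank} produces a line whose neighborhood of radius $(1-\xi)A|C| + B$ contains the image of a $\xi$-almost isometric cycle under $\iota$, while Lemma \lemref{useplank} takes exactly such a ``line in a neighborhood'' hypothesis and converts it into a length bound on the cycle. The only subtlety is that the thickness of the neighborhood produced by the first lemma grows linearly in $|C|$, so naively substituting into the second lemma produces $|C|$ on both sides. The plan is to show that for $\xi$ sufficiently close to $1$, the coefficient of $|C|$ on the right is strictly less than $1$, and the inequality then yields the desired uniform bound.

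More concretely, I would fix $\xi \in (0,1)$ (to be chosen) and let $f \colon C \to \Gamma$ be a $\xi$-almost isometric cycle. Applying Lemma \lemref{getplank} with the constants $A, B$ depending only on $S$, the image of $\iota \comp f$ lies in the $N$-neighborhood of some line, where $N = (1-\xi)A|C| + B$. Feeding this $N$ into Lemma \lemref{useplank} gives
\[ |C| \le \frac{2K}{\xi}\bigl(M + 2N\bigr) = \frac{2K}{\xi}\bigl(M + 2B\bigr) + \frac{4KA(1-\xi)}{\xi}|C|. \]
Choosing $\xi$ close enough to $1$ so that $\frac{4KA(1-\xi)}{\xi} < 1$, equivalently $\xi > \frac{4KA}{1+4KA}$, this rearranges to a uniform bound
\[ |C| \le \frac{2K(M+2B)}{\xi - 4KA(1-\xi)}, \]
so such $\xi$ witnesses that $\Gamma$ is strongly shortcut.

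I do not anticipate a serious obstacle in this argument; essentially all the work has been done in the two preceding lemmas. The one thing to verify carefully is just that the constants $A$ and $B$ in Lemma \lemref{getplank} genuinely depend only on the generating set $S$ and not on $\xi$, so that the threshold $\xi > \frac{4KA}{1+4KA}$ is a valid choice strictly less than $1$; this is immediate from the statement of \lemref{getplank}.
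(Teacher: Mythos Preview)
Your proposal is correct and is essentially identical to the paper's own proof: apply \Lemref{getplank} to obtain a line whose $\bigl((1-\xi)A|C|+B\bigr)$-neighborhood contains the image, plug this into \Lemref{useplank}, and choose $\xi > \frac{4KA}{1+4KA}$ so that the resulting inequality can be solved for $|C|$. Even the threshold you derive for $\xi$ matches the paper's.
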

\begin{proof}
  Let $f \colon C \to \Gamma$ be a $\xi$-almost isometric cycle.  By
  \Lemref{getplank} we have a line $L \subset \R^2$ whose
  $\bigl((1-\xi)A|C| + B\bigr)$-neighborhood contains the image of
  $\iota \comp f$.  So, by \Lemref{useplank}, we have
  \[ |C| \le \frac{2K}{\xi}\bigl(M+2(1-\xi)A|C| + 2B\bigr) \] and
  so
  \[ \biggl(1 - \frac{4K}{\xi}(1-\xi)A\biggr)|C| \le
    \frac{2K}{\xi}(M+2B) \] which gives us a bound on the length of
  $|C|$ assuming we have $1 - \frac{4K}{\xi}(1-\xi)A > 0$.  But this
  condition is equivalent to $\xi > \frac{4KA}{1 + 4KA}$.  Hence, for
  $\xi \in \bigl(\frac{4KA}{1 + 4KA}, 1\bigr)$, there is a bound on
  the length of the $\xi$-almost isometric cycles of $\Gamma$.
\end{proof}

\section{The Baumslag-Solitar group
  \texorpdfstring{$\BS(1,2)$}{BS(1,2)}}

\seclabel{bsgp}

The Baumslag-Solitar group $\BS(1,2)$ is defined by the following
presentation. \[ \gpres{a,t}{tat^{-1} = a^2} \] In this section we
will show that the standard Cayley graph of $G = \BS(1,2)$ is shortcut
but that adding the generator $\tau = t^2$ results in a Cayley graph
$\Cay\bigl(G,\{a,t,\tau\}\bigr)$ which is not shortcut.  Hence we see
that there exists a shortcut group with exponential Dehn function
\cite{Gersten:1992} and that the shortcut property for a Cayley graph
is not invariant under a change of generating set.  We also see that
there exists a shortcut group which is not strongly shortcut, since
strongly shortcut groups have polynomial isoperimetric function, by
\Corref{isoper}.

Let $\Gamma$ be the Cayley graph of $\BS(1,2)$ with generating set
$\{a,t\}$.  Since $\BS(1,2)$ is an HNN extension it has a Bass-Serre
tree $T$.  Every vertex of $T$ has two outgoing edges labeled $t$ and
one incoming edge labeled $t$.

\begin{lem}
  \lemlabel{normform} Every element of $\BS(1,2)$ can be written
  uniquely in the form $t^ma^kt^n$ where $m,k,n \in \Z$ and $k$ is
  even only if $k = m = 0$.
\end{lem}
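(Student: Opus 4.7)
My plan is to realise $\BS(1,2)$ as the semidirect product $\Z[1/2] \rtimes \Z$ in which the generator of the $\Z$-factor acts on $\Z[1/2]$ by multiplication by $2$, with $a$ mapping to $(1,0)$ and $t$ mapping to $(0,1)$. The presentation $\gpres{a,t}{tat^{-1}=a^2}$ maps to this semidirect product by the universal property of the HNN extension, since the relation is immediate in the target. A short computation in the semidirect product then gives
\[ t^m a^k t^n \;=\; (2^m k,\; m + n). \]
So the normal form statement is equivalent to saying each element of $\Z[1/2] \rtimes \Z$ has a unique expression of the form $(2^m k, m + n)$ with $k$ odd, or $k = m = 0$.

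For existence, given an element $(p,q) \in \Z[1/2] \rtimes \Z$: if $p = 0$, take $m = k = 0$ and $n = q$; otherwise, use the fact that every nonzero dyadic rational $p$ admits a unique factorisation $p = 2^m k$ with $m \in \Z$ and $k$ an odd integer, and set $n = q - m$. In both cases the constraint that $k$ be odd or $k = m = 0$ is met.

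For uniqueness, suppose $t^m a^k t^n$ and $t^{m'} a^{k'} t^{n'}$ represent the same element and both satisfy the constraint. From $(2^m k, m+n) = (2^{m'} k', m'+n')$: if $k=0$, the constraint forces $m = 0$, and then $2^{m'} k' = 0$ gives $k' = 0$, which in turn forces $m' = 0$; comparing second coordinates then yields $n = n'$. If $k \neq 0$, then $k$ and $k'$ are both odd, so uniqueness of the $2$-adic factorisation of dyadic rationals yields $m = m'$ and $k = k'$, whence $n = n'$.

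The main subtlety is justifying the identification $\BS(1,2) \cong \Z[1/2] \rtimes \Z$. Surjectivity of the homomorphism coming from the universal property is immediate from the formula for $t^m a^k t^n$, but injectivity needs input — for instance, realising $\BS(1,2)$ as an ascending HNN extension of $\Z$ with $\Z[1/2]$ as the direct limit of $\Z \xrightarrow{2} \Z \xrightarrow{2} \cdots$, or invoking Britton's lemma. Alternatively, one can prove existence of the normal form directly by a reduction argument using $tat^{-1} = a^2$ and $t^{-1}a^2 = at^{-1}$, and then use the homomorphism $\BS(1,2) \to \Z[1/2] \rtimes \Z$ only to deduce uniqueness, sidestepping the full isomorphism.
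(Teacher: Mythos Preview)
Your proposal is correct. Your main approach differs from the paper's in emphasis: you pass to the concrete model $\Z[1/2] \rtimes \Z$ and read off both existence and uniqueness from the formula $t^m a^k t^n \mapsto (2^m k, m+n)$, whereas the paper establishes existence by direct word rewriting (pushing positive $t$'s right and negative $t$'s left, then extracting the largest power of $2$ from $k$) and establishes uniqueness by appealing to the Bass-Serre tree for $m+n = m'+n'$ and to the embedding of the base group in the HNN extension for $k' = 2^{m-m'}k$. These are really the same ingredients viewed differently: your second coordinate in the semidirect product is the height in the Bass-Serre tree, and the embedding of the base group is exactly what makes your homomorphism injective on $\langle a \rangle$. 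Your alternative route---existence by rewriting, uniqueness via the homomorphism alone---is essentially the paper's proof with the Bass-Serre tree replaced by the explicit target group. One small observation: the paper immediately \emph{uses} this lemma to set up the bijection $\phi\colon G \to \Z[1/2]\times\Z$, so in the paper's logical flow the semidirect-product model is a consequence of the normal form rather than a tool for proving it; your ordering reverses this, which is fine but does require the independent justification of injectivity that you flag.
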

\begin{proof}
  Given any word representing an element of $\BS(1,2)$ in the standard
  generators, we may commute positive powers of $t$ to the right and
  negative powers of $t$ to the left using the relations
  $t^na^k = a^{2^nk}t^n$ and $a^kt^{-n} = t^{-n}a^{2^nk}$, with
  $n \ge 0$, to obtain a representative of the form $t^ma^kt^n$.  Then
  we may apply the relation $a^{k} = t^na^{k/2^n}t^{-n}$ if $k$ is a
  nonzero integer multiple of $2^n$, with $n \ge 0$, to obtain a
  representative of the form $t^ma^kt^n$ where $k$ is even if and only
  if $k = m= 0$.

  To see that this form is unique, let
  $t^{m'} a^{k'} t^{n'} = t^m a^k t^n$.  By consideration of the
  Bass-Serre tree $T$ we must have $m+n = m'+n'$.  Without loss of
  generality $m \ge m'$ and so we have
  \[ a^{k'}t^{n'} = t^{m - m'}a^kt^n = a^{2^{m-m'}k}t^{m-m'}t^n =
    a^{2^{m-m'}k}t^{n'} \] and so, as the base group embeds in an HNN
  extension, we have $k' = 2^{m-m'}k$.  So, in the case where $k=m=0$,
  we have $k' = 0$ and so $m' = 0$, which implies
  $(m',k',n') = (m,k,n)$.  If $k \neq 0$ then $k' \neq 0$ and so $k$ and
  $k'$ are both odd integers.  Hence $2^{m-m'} = 1$, which again
  implies $(m',k',n') = (m,k,n)$.
\end{proof}

It follows from \Lemref{normform} that we have a one-to-one
correspondence
\begin{align*}
  \phi \colon G &\to \Z\Bigl[\frac{1}{2}\Bigr]\times\Z \\
  t^ma^kt^n &\mapsto (2^mk, m+n)
\end{align*}
with inverse
\begin{align*}
  \phi^{-1} \colon \Z\biggl[\frac{1}{2}\biggr]\times\Z &\to G \\
  (r, z) &\mapsto
           \begin{cases}
             t^{\nu(r)}a^{r/2^{\nu(r)}}t^{z - \nu(r)} &\text{if $r \neq 0$} \\
             t^z & \text{if $r = 0$}
           \end{cases} \\
\end{align*}
where $\Z\bigl[\frac{1}{2}\bigr]$ is the set of dyadic rationals and
$\nu(r)$ is defined as follows.
\[ \nu(r) = \max\Bigl\{m \in \Z \sth \text{$r$ is an integer multiple
    of $2^m$}\Bigr\} \] The \defterm{height} of a point $(r,z)$ is
$z$.  We use $\mu(g)$ to denote the height of $\phi(g)$ for
$g \in \BS(1,2)$.  For a word $w$ in a generating of $\BS(1,2)$ we
define the \defterm{height} $\mu(w)$ of $w$ as $\mu(g)$ for the
element $g \in \BS(1,2)$ that is represented by the word $w$.

Pushing forward the group operation to
$\Z\bigl[\frac{1}{2}\bigr]\times\Z$ gives the following operation.
\[ (r,z)\cdot(r',z') = (r + 2^{z}r',z+z') \] Pushing forward the
Cayley graph structure gives the following edges.
\begin{align*}
  (r,z) &\xrightarrow{a} (r+2^z,z) \\
  (r,z) &\xrightarrow{t} (r,z+1)
\end{align*}
The Bass-Serre tree $T$ may be identified with the quotient of this
graph which identifies $(r,z)$ and $(r',z)$ if $r-r'$ is an integer
multiple of $2^z$.  This identification preserves height so we may
refer to the \defterm{height} $\mu(v)$ of a vertex $v$ of $T$.

\begin{lem}
  \lemlabel{height_reducing_path_parent} Let $u$ and $v$ be distinct vertices
  of $T$ with $\mu(u) \ge \mu(v)$ and let $u^{(-1)} \xrightarrow{t} u$
  be the unique incoming edge of $T$ at $u$.  If
  $(u = u_0, u_1, \ldots, u_k = v)$ is the sequence of vertices of a
  path from $u$ to $v$ in $T$ then $u_i = u^{(-1)}$ for some
  $i \in \{1, \ldots k\}$.
\end{lem}
\begin{proof}
  The proof is by induction on $k$.  Since $u \neq v$ we have
  $k \ge 1$.  If $k = 1$ then there must be an edge joining $u$ and
  $v$ but $u^{(-1)}$ is the only neighbor of $u$ whose height is not
  greater than that of $u$ so $u_1 = v = u^{(-1)}$.

  Suppose $k > 1$.  Let $i \in \{0,1, \ldots, k\}$ be maximal such
  that $u_i = u$.  Since $u \neq v$ we have $i < k$.  We claim that
  $u_{i+1} = u^{(-1)}$.  Indeed, if this were not the case then we would
  have $u \xrightarrow{t} u_{i+1}$ so that
  $\mu(u_{i+1}) > \mu(u) \ge \mu(v)$ and the induction hypothesis
  applied $(u_{i+1}, u_{i+2}, \ldots, u_k = v)$ to would imply that
  $u_j = u$ for some $j \in \{i+2, i+3, \ldots, k\}$ contradicting
  maximality of $i$.
\end{proof}

\begin{lem}
  \lemlabel{height_reducing_path} Let $u,v \in T$ with
  $\mu(u) - \mu(v) = h > 0$ and let
  \[ u^{(-h)} \xrightarrow{t} u^{(-h+1)} \xrightarrow{t} \cdots
    \xrightarrow{t} u^{(-1)} \xrightarrow{t} u \] be the unique
  directed path of length $h$ ending at $u$ in $T$.  If
  $(u = u_0, u_1, \ldots, u_k = v)$ is the sequence of vertices of a
  path from $u$ to $v$ in $T$ then $u_i = u^{(-h)}$ for some
  $i \in \{1, 2, \ldots k\}$.
\end{lem}
\begin{proof}
  The proof is by induction on $h$.  By
  \Lemref{height_reducing_path_parent}, we have $u_i = u^{(-1)}$ for
  some $i \in \{1, 2, \ldots, k\}$.  This proves the base case
  $h = 1$.  For $h > 1$ the lemma follows from the induction
  hypothesis applied to $(u_i, u_{i+1}, \ldots, u_k = v)$.
\end{proof}

\begin{lem}
  \lemlabel{minheight} Every finite subtree of $T$ has a unique vertex
  of minimal height.
\end{lem}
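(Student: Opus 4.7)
The plan is to handle existence and uniqueness separately. Existence is immediate: any finite subtree has at least one vertex attaining the minimum of the integer-valued height function $\mu$. The real content is uniqueness, for which the key structural fact about $T$ will be that each vertex $v$ has exactly one neighbor of height $\mu(v)-1$ and exactly two neighbors of height $\mu(v)+1$. This follows from the description that each vertex has one incoming and two outgoing $t$-edges, together with the fact that multiplying by $t$ increases height by one.

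From this I would extract the following intermediate claim: along any geodesic $v_0, v_1, \ldots, v_n$ in $T$, the height sequence $\mu(v_0), \mu(v_1), \ldots, \mu(v_n)$ has no strict interior local maximum. Indeed, if some interior index $i$ satisfied $\mu(v_{i-1}) < \mu(v_i) > \mu(v_{i+1})$, then $v_{i-1}$ and $v_{i+1}$ would both be neighbors of $v_i$ at height $\mu(v_i)-1$; but $v_i$ has only one such neighbor, so $v_{i-1} = v_{i+1}$, contradicting that a geodesic has no backtracks. Since consecutive heights along a geodesic in $T$ differ by exactly one, the no-interior-maximum property forces the height profile to be strictly decreasing and then strictly increasing, with either phase possibly empty. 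In particular, if $n \ge 1$ and $\mu(v_0) = \mu(v_n)$, then the minimum of $\mu$ along the geodesic is achieved at a strict interior vertex whose height is strictly less than $\mu(v_0)$.

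To conclude, suppose for contradiction that $u \neq v$ are two vertices of a finite subtree $S$ of $T$ both attaining the minimum of $\mu$ on $S$. The geodesic in $T$ from $u$ to $v$ lies entirely in $S$, since a subtree of a tree is convex, and by the previous paragraph this geodesic contains an interior vertex $w$ with $\mu(w) < \mu(u) = \mu(v)$. Since $w \in S$, this contradicts the minimality of $\mu(u)$ on $S$. The only mild obstacle here is being careful about the no-interior-maximum claim, but it is a direct application of the one-down, two-up valence structure at each vertex of $T$.
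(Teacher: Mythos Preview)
Your proof is correct and takes a genuinely different route from the paper's. You work purely with the abstract tree structure: using the one-down, two-up valence at each vertex to show that geodesics in $T$ have a valley-shaped height profile, and then deriving a contradiction from the convexity of subtrees. The paper instead uses the explicit coordinate description of $T$ as the quotient of $\Z\bigl[\frac{1}{2}\bigr]\times\Z$: it observes that any path in the subtree between two minimal-height vertices $(r,z)$ and $(r',z)$ stays at height at least $z$, so the changes in the first coordinate are all integer multiples of $2^z$, forcing $r\equiv r'\pmod{2^z}$ and hence $v=v'$ in $T$. Your argument is more elementary and would apply to any tree with a height function having the one-down property; the paper's argument is shorter given the coordinate model already in place and reinforces the algebraic viewpoint used in the lemmas that follow.
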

\begin{proof}
  This follows from the fact that each vertex of $T$ has two outgoing
  edges and one incoming edge and that height increases when
  traversing the outgoing edges and decreases when traversing the
  incoming edges.  Let $T'$ be a finite subtree and suppose, for the
  sake of finding a contradiction, that $v$ and $v'$ are distinct
  minimal height vertices of $T'$.  Let $P$ be the unique embedded
  path from $v$ to $v'$ in $T$.  Then $P \subset T'$ and so, by
  minimality of the height of $v$, the first edge of $P$ is outgoing
  from $v$.  Then, since $P$ is an embedded path and each vertex of
  $T$ has at most one incoming edge, every subsequent edge of $P$ is
  also directed away from $v$ and towards $v'$.  Thus $v'$ has greater
  height than $v$, a contradiction.
\end{proof}

\subsection{Geodesics in \texorpdfstring{$\BS(1,2)$}{BS(1,2)}}

We now prove some lemmas about geodesics in the Cayley graph $\Gamma$.
We will describe paths in $\Gamma$ using words in the letters
$\{a^{\pm 1},t^{\pm 1}\}$.  We use the notation $w_1 \equiv w_2$ to
mean that the words $w_1$ and $w_2$ represent the same element of
$\BS(1,2)$.  Of course if $w_1 \equiv w_2$ then paths described by
$w_1$ and $w_2$ and starting at the same initial vertex must have the
same final vertex.

\begin{rmk}
  A path in $\Gamma$ may be projected to a path in $T$.  A backtrack
  in the projection corresponds to a subword of the form $ta^kt^{-1}$
  or $t^{-1}a^{2k}t$.  The initial and terminal edges of a path
  described by $twt^{-1}$ project to the same edge in $T$ if and only
  if $w \equiv a^k$.  The initial and terminal edges of a path
  described by $t^{-1}wt$ project to the same edge in $T$ if and only
  if $w \equiv a^{2k}$.
\end{rmk}

\begin{lem}
  \lemlabel{notgeo} Words of the following forms do not describe
  geodesic paths.
  \begin{enumerate}
  \item \itmlabel{tati} $ta^{\pm 1}t^{-1}$
  \item \itmlabel{tiak} $t^{-1}a^k$ and $a^kt$ with $|k| \ge 2$
  \item \itmlabel{rightdownleft} $a^{\epsilon}t^{-1}a^{-\epsilon}$
    with $\epsilon = \pm 1$.
  \item \itmlabel{downupdown} $t^{-1}w_1tw_2t^{-1}$ with
    $w_1 \equiv a^{k_1}$ and $w_2 \equiv a^{k_2}$
  \item \itmlabel{tconvex} $w \equiv t^h$ with $w \neq t^h$.
  \item \itmlabel{downup} $w_1w_2$ with $w_1w_2 \equiv a^k$ and
    where $\mu(w_1) < 0$
  \end{enumerate}
\end{lem}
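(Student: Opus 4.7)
My plan is to prove each of the six items via a direct word reduction using the defining relation $tat^{-1} = a^2$ (equivalently $at^{-1} \equiv t^{-1}a^2$), except for item~(6), which will require a more delicate Bass--Serre tree argument. Items~(1)--(5) I will dispatch by short direct arguments, and the bulk of the work will go into~(6).

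For~(1), the relation immediately gives $ta^{\pm 1}t^{-1} \equiv a^{\pm 2}$. For~(2), writing $k = 2q + \epsilon$ with $\epsilon \in \{-1,0,1\}$ and iterating $t^{-1}a^2 \equiv at^{-1}$ gives $t^{-1}a^k \equiv a^q t^{-1}a^\epsilon$, of length $|q|+1+|\epsilon|$, strictly less than $|k|+1$ whenever $|k| \ge 2$; the form $a^k t$ is symmetric. For~(3), $at^{-1}a^{-1} \equiv t^{-1}a^2 \cdot a^{-1} = t^{-1}a$, and analogously $a^{-1}t^{-1}a \equiv t^{-1}a^{-1}$. For~(4), I will first replace each $w_i$ by $a^{k_i}$ (which can only shorten); then using $a^{k_2}t^{-1} \equiv t^{-1}a^{2k_2}$, the word $t^{-1}a^{k_1}ta^{k_2}t^{-1}$ simplifies to $t^{-1}a^{k_1+2k_2}$, and applying~(2) yields a representative of length at most $\lceil |k_1|/2\rceil + |k_2| + 2$, which is strictly less than the original $|k_1|+|k_2|+3$. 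For~(5), let $p, q, r$ be the numbers of $t$-, $t^{-1}$-, and $a^{\pm 1}$-letters in $w$; the constraint $p-q = \mu(w) = h$ yields $|w| = p+q+r \ge |h|$, and equality forces $r = 0$ and $\min(p,q) = 0$, which forces $w = t^h$ as a word, contradicting the hypothesis.

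The harder case is~(6). My plan is to project the walk of $w_1 w_2$ to the Bass--Serre tree $T$: this gives a closed walk at $v_0$ that visits some vertex strictly below $v_0$, because $\mu(w_1) < 0$. Let $v^*$ be a deepest tree vertex visited. At each visit to $v^*$, the walk enters via some $t^{-1}$-letter, performs a subword $a^{j_i}$ (no further $t^{\pm 1}$ letters can occur while at $v^*$, since $v^*$ is deepest), and leaves via some $t$-letter. If some $j_i$ is even then the identity $t^{-1}a^{j_i}t \equiv a^{j_i/2}$ directly shortens the word. Otherwise every $j_i$ is odd; but an odd $j_i$ swaps the $t$-exit to the upward $t$-neighbor of $v^*$ opposite to the $t^{-1}$-entry, so a single visit with $j_0$ odd would strand the walk on the wrong side of $v^*$ with no way back to $v_0$. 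So at least two visits to $v^*$ occur, and combining two consecutive cusps $t^{-1}a^{j_i}t \,\alpha\, t^{-1}a^{j_{i+1}}t$ and repeatedly applying $a^\ell t^{-1} \equiv t^{-1}a^{2\ell}$ on the middle rewrites them as $t^{-1}a^N t$ with $N$ even, which reduces to $a^{N/2}$. The main obstacle I foresee is the case when the intermediate word $\alpha$ itself contains $t^{\pm 1}$-letters (i.e., the walk ascends higher than $v^*$'s upward neighbors between visits); I will handle this by induction on the total number of $t^{\pm 1}$-letters in $w_1 w_2$, the base case being $\alpha$ consisting only of $a^{\pm 1}$-letters.
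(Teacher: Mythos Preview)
Your arguments for items (1), (2), (3) and (5) are correct and essentially match the paper's. The real gap is in (4), and it propagates into your plan for (6).

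In (4) you say ``first replace each $w_i$ by $a^{k_i}$ (which can only shorten)''. This is false: $w_i$ can be far shorter than $a^{k_i}$. For example $w_2 = t^{10}at^{-10} \equiv a^{2^{10}}$ has $|w_2| = 21$ while $|a^{k_2}| = 1024$. Your final representative has length $\lceil |k_1|/2\rceil + |k_2| + 2$, which for this $w_2$ (and, say, $w_1 = a$) is about $1027$, vastly longer than the original word $t^{-1}w_1tw_2t^{-1}$ of length $25$. So your reduction does not establish non-geodesicity. The paper instead uses the one-line identity
\[
t^{-1}w_1tw_2t^{-1} \;\equiv\; t^{-1}a^{k_1}a^{2k_2} \;\equiv\; t^{-1}a^{2k_2}a^{k_1} \;\equiv\; w_2t^{-1}w_1,
\]
which shortens by exactly $2$ regardless of $|w_1|$, $|w_2|$, $k_1$, $k_2$.

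For (6) your geometric picture is right (project to the Bass--Serre tree, locate the unique deepest vertex $v^*$, find cusps $t^{-1}a^{j_i}t$ there), and the parity argument forcing at least two cusps when all $j_i$ are odd is correct. But the reduction you propose for two consecutive cusps, rewriting $t^{-1}a^{j_0}t\,\alpha\,t^{-1}a^{j_1}t$ as $a^{N/2}$ with $N = j_0 + 2\ell + j_1$, suffers from the same failure: $|\ell|$ can be exponential in $|\alpha|$ (take $\alpha = t^{n}at^{-n}$), so $|N|/2$ can dwarf $|\alpha| + |j_0| + |j_1| + 4$. Your proposed induction on the number of $t^{\pm 1}$-letters does not repair this, because $\alpha$ has all prefixes of nonnegative height and is therefore not itself an instance of (6); there is no smaller case of the inductive hypothesis to invoke.

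The paper's route is to stop one letter short: once you have located two consecutive cusps, the subword $t^{-1}a^{j_0}t\,\alpha\,t^{-1}$ is exactly of the form in (4) with $w_1 = a^{j_0}$ and $w_2 = \alpha$, so a correct proof of (4) immediately finishes (6), with no induction needed. Fix (4) via the identity above and your argument for (6) goes through cleanly.
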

\begin{proof} The following equivalences prove nongeodesicity for
  \pitmref{tati}, \pitmref{tiak}, \pitmref{rightdownleft} and
  \pitmref{downupdown}.
  \begin{itemize}
  \item[\pitmref{tati}] $ta^{\pm 1}t^{-1} \equiv a^{\pm 2}$
  \item[\pitmref{tiak}] $t^{-1}a^k \equiv a^{\pm 1}t^{-1}a^{k \mp 2}$
  \item[\pitmref{rightdownleft}]
    $a^{\epsilon}t^{-1}a^{-\epsilon} \equiv t^{-1}a^{\epsilon}$
  \item[\pitmref{downupdown}]
    $t^{-1}w_1tw_2t^{-1} \equiv t^{-1}a^{k_1}a^{2k_2} \equiv
    t^{-1}a^{2k_2}a^{k_1} \equiv t^{-1}tw_2t^{-1}w_1 \equiv w_2t^{-1}w_1$
  \end{itemize}
  
  \pitmref{tconvex} Suppose $w$ is geodesic with $w \equiv t^h$.  Note
  that $\mu(w) = h$ so $w$ must contain at least $|h|$ instances of
  $t^{\epsilon}$ where $\epsilon$ is the sign of $h$.  Hence, as
  $|w| \le |t^h| = h$, we see that $w$ cannot contain any instance of
  $a^{\pm 1}$.  But $w$ may not contain any backtracks either and so
  $w = t^h$.

  \pitmref{downup} Suppose $w = w_1w_2$ is geodesic with
  $w \equiv a^k$ and $\mu(w_1) < 0$.  We view $w$ as a path
  $P \to \Gamma$.  We pull back labels and directions from $\Gamma$ so
  that the edges of $P$ are directed and labeled with the generators
  $a$ and $t$.  In this way, each subpath of $P$ is labeled by a word
  in $a$, $t$ and their inverses.  By \Lemref{minheight}, there is a
  unique vertex $v$ of minimal height of the projection of $w$ to the
  Bass-Serre tree $T$ and $\mu(v) \le \mu(w_1) < 0 = \mu(1) = \mu(w)$.
  Then $w$ must contain a subpath labeled $t^{-1}a^{\ell}t$ with the
  $a^{\ell}$ part mapping to $v$ under the projection to $T$.  Then
  \pitmref{tiak} implies that $|k| = 1$.  So
  $t^{-1}a^{\ell}t=t^{-1}a^{\pm 1}t$ corresponds to a nonbacktracking
  path in $T$.  Since the projection of $w$ to $T$ is a closed path
  and $v$ is a cutpoint of $T$, it follows that $w$ contains another
  subpath labeled $t^{-1}a^{\pm 1}t$ such that $a^{\pm 1}$ maps to $v$
  under the projection to $T$.  Then $w$ contains a subword labeled
  $t^{-1}a^{\epsilon_1}tw't^{-1}a^{\epsilon_2}t$ with
  $\epsilon_i \in \{\pm 1\}$ and $w' \equiv a^{k'}$.  But, by
  \pitmref{downupdown}, we know that $t^{-1}a^{\epsilon_1}tw't^{-1}$
  is not geodesic, which is a contradiction.
\end{proof}

\begin{lem}
  \lemlabel{zeroheight} If $\mu(w) = 0$ and every prefix $w'$ of $w$
  has $\mu(w') \ge 0$ then $w \equiv a^k$ for some $k$.
\end{lem}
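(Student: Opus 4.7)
The plan is to project the path traced by $w$ in the Cayley graph $\Gamma$ to the Bass-Serre tree $T$ and argue that the resulting walk must return to its starting vertex, which forces the endpoint to lie in the $\langle a\rangle$-coset of the identity.

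Let $\rho$ be the walk in $T$ obtained by composing the $\Gamma$-path from $1$ to $w$ with the quotient $\Gamma \to T$. The $a^{\pm 1}$-edges of $\Gamma$ collapse in $T$, so $\rho$ moves only at the $t^{\pm 1}$-letters of $w$, and each such letter shifts the height by $\pm 1$. Letting $v_0$ be the starting vertex of $\rho$, the height of $\rho$ after reading any prefix $w'$ of $w$ equals $\mu(w')$, so by hypothesis $\rho$ stays at height $\ge 0$ throughout and terminates at height $0$.

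The main step is to show $\rho$ ends at $v_0$ itself. Since every vertex of $T$ has two children (one height higher) and a unique parent (one height lower), the subtree $D \subset T$ of descendants of $v_0$ consists entirely of vertices of height $\ge 0$, and $v_0$ is its unique height-$0$ vertex. Hence any other height-$0$ vertex $v'$ of $T$ lies outside $D$, and the unique edge of $v_0$ leading out of $D$ is the parent-edge, whose other endpoint has height $-1$. By \Lemref{minheight} applied to the tree-geodesic from $v_0$ to $v'$ (or simply because each vertex of a tree is a cut vertex), any walk from $v_0$ to $v'$ must at some point traverse this parent-edge and thus visit a vertex of height $-1$. This contradicts the prefix hypothesis, so $\rho$ must end at $v_0$.

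Finally, identifying $w \in \BS(1,2)$ with $\phi(w) = (r, 0) \in \Z[\tfrac{1}{2}] \times \Z$, the endpoint condition $\rho(\text{end}) = v_0 = [0]_0$ in $T$ means $r \in 2^0\Z = \Z$, so $w \equiv a^r$, completing the argument. The main obstacle is the cut-vertex observation in the preceding paragraph; once that is set up, the reduction from the Cayley graph to $T$ is routine because $a$-letters are invisible to $T$ and the $t$-letters control the height directly.
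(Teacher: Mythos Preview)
Your proof is correct, but the paper takes a shorter and more direct route. Rather than projecting to the Bass-Serre tree and invoking a cut-vertex argument, the paper simply tracks the first coordinate in the $\Z\bigl[\tfrac{1}{2}\bigr] \times \Z$ model: starting at $(0,0)$, each $t^{\pm 1}$-step leaves the first coordinate unchanged, while each $a^{\pm 1}$-step at current height $z$ changes it by $\pm 2^z$. The prefix hypothesis guarantees $z \ge 0$ at every such step, so the first coordinate only ever changes by an integer and hence remains an integer throughout; since $\mu(w) = 0$, the endpoint is $(k,0)$ for some $k \in \Z$, i.e.\ $w \equiv a^k$. Your tree argument recovers precisely the same conclusion---that the endpoint lies in the $2^0\Z$-coset of $0$---but via the structural fact that $v_0$ separates its descendant subtree from the rest of $T$. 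The paper's argument is more economical here (essentially one line), while yours makes the role of the Bass-Serre tree explicit, in keeping with how $T$ is used throughout the section; the appeal to \Lemref{minheight} is not really needed, as your parenthetical cut-vertex remark already suffices.
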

\begin{proof}
  Viewing $w$ as a path in $\Z\bigl[\frac{1}{2}\bigr]\times\Z$
  starting at $(0,0)$, we see that at each step the first coordinate
  changes by a positive power of $2$.  Hence the endpoint of the path
  is $(k,0)$ for some integer $k$.
\end{proof}

A word $w$ is \defterm{ascending} if it contains only positive powers
of $t$ and \defterm{descending} if it contains only negative powers of
$t$.

\begin{lem}
  \lemlabel{descent} Let $w$ be a geodesic word with
  $w \equiv t^{-h}a^k$ where $h \ge 0$.  Then no prefix $w'$ of $w$
  satisfies $\mu(w') < -h$ and we have $w = xy$ where $x$ is ascending
  and $y$ is descending.
\end{lem}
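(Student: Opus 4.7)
I plan to prove both conclusions simultaneously by strong induction on $|w|$, with the base case $|w|=0$ trivial. The inductive step splits on the first letter of $w$.

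If $w = a^{\epsilon}w'$, the commutation identity $a^{\epsilon}t^{-h} \equiv t^{-h}a^{\epsilon\cdot 2^{h}}$ gives $w' \equiv t^{-h}a^{k-\epsilon\cdot 2^{h}}$, and the inductive hypothesis on $w'$ (same $h$) transfers both conclusions to $w$, with the leading $a^{\epsilon}$ absorbed into the ascending part. If $w = tw'$, then $w' \equiv t^{-(h+1)}a^{k}$; the IH at parameter $h+1$ bounds prefix heights of $w'$ below by $-(h+1)$, hence prefix heights of $w$ below by $-h$, and the leading $t$ extends the ascending portion. If $w = t^{-1}w'$ with $h \geq 1$, the analogous reduction to $w' \equiv t^{-(h-1)}a^{k}$ handles Part 1 for $w$. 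The remaining subcase $w = t^{-1}w'$ with $h=0$ would make $w \equiv a^{k}$ have a one-letter prefix of height $-1$, so \Lemitmref{notgeo}{downup} contradicts geodesicity of $w$ and this subcase does not arise. This establishes Part 1 in all cases.

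Part 2 is automatic in the first three first-letter cases, where the leading $a^{\epsilon}$ or $t$ simply extends the ascending portion of $w'$ produced by the IH. The one remaining case is $w = t^{-1}w'$ with $h \geq 1$: writing $w' = x'y'$ from the IH, the decomposition $x = \emptyset$, $y = t^{-1}x'y'$ works if and only if $x'$ contains no $t$. Assume for contradiction that $x'$ has a $t$; then $w$ contains a $t^{-1}$ (at position $1$) strictly preceding some $t$. Pick $i < j$ with $w_{i} = t^{-1}$, $w_{j} = t$, minimising $j-i$; by minimality, the letters strictly between $i$ and $j$ are all $a^{\pm 1}$, spelling a pure $a$-power $a^{m}$. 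The case $m=0$ yields the backtrack $t^{-1}t$, and $|m| \geq 2$ contradicts \Lemitmref{notgeo}{tiak}, leaving $m=\pm 1$, i.e.\ the subword $t^{-1}a^{\epsilon}t$ in $w$.

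Let $\rho$ and $\sigma$ be the portions of $w$ before position $i$ and after position $j$. If $\sigma$ contains no $t^{-1}$, then $\sigma$ is ascending with $\mu(\sigma) \geq 0$; since $\mu(\rho) + \mu(\sigma) = -h$ and Part 1 (already established) gives $\mu(\rho) \geq -h$, we must have $\mu(\sigma) = 0$ and hence $\sigma$ is a pure $a$-power, so the prefix $\rho t^{-1}$ has $\mu = -h - 1$, contradicting Part 1. If $\sigma$ does contain a $t^{-1}$, I aim to locate one preceded in $\sigma$ only by $a$-letters, so that the subword $t^{-1}a^{\epsilon}t\alpha' t^{-1}$ of $w$ satisfies the hypotheses of \Lemitmref{notgeo}{downupdown} and is non-geodesic. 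The obstruction is intervening $t$-letters between position $j$ and the first $t^{-1}$ of $\sigma$; these can be eliminated case-by-case using \Lemitmref{notgeo}{rightdownleft} and \Lemitmref{notgeo}{tati} together with the shortening identity $a^{\epsilon}t a^{-\epsilon} \equiv a^{-\epsilon}t$, which forbids the locally obstructive patterns. The main difficulty of the proof is exactly this bookkeeping step, but the tools of \Lemref{notgeo} together with Part 1 for $w$ should suffice to force the intended contradiction.
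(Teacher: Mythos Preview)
Your inductive approach to Part~1 is correct and is a pleasant alternative to the paper's direct Bass--Serre tree argument; the four first-letter cases are handled cleanly, and the $h=0$ subcase is disposed of exactly as it should be via \Lemitmref{notgeo}{downup}.

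The gap is in Part~2, in the final subcase where $w = t^{-1}w'$, the ascending part $x'$ of $w'$ contains a $t$, and $\sigma$ contains a $t^{-1}$. You want the portion of $\sigma$ preceding its first $t^{-1}$ to be a pure $a$-power so that \Lemitmref{notgeo}{downupdown} applies, but this portion may well contain $t$'s (for instance, a pattern $t^{-1}a^{\epsilon}t\,a^{m_0}t\,a^{m_1}\cdots t\,a^{m_r}t^{-1}$ with $r\ge 1$ is not excluded by the lemmas you cite: from \Lemitmref{notgeo}{tati} you only get $|m_r|\ge 2$, and the shortening identity $a^{\epsilon}ta^{-\epsilon}\equiv a^{-\epsilon}t$ only constrains the sign of $m_0$). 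Your ``case-by-case elimination'' is not carried out, and the tools you list do not obviously complete it.

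The paper sidesteps this entirely. The hypothesis of \Lemitmref{notgeo}{downupdown} only requires the middle word $w_2$ to satisfy $w_2 \equiv a^{k_2}$, not to be a literal $a$-power. So instead of looking at the first $t^{-1}$ in $\sigma$, take the shortest prefix of $\sigma$ whose height is negative; it has the form $w_2't^{-1}$ with $\mu(w_2')=0$ and every prefix of $w_2'$ of nonnegative height. \Lemref{zeroheight} then gives $w_2'\equiv a^{k}$ even though $w_2'$ may contain $t^{\pm 1}$'s, and \Lemitmref{notgeo}{downupdown} applies to $t^{-1}a^{\epsilon}t\,w_2'\,t^{-1}$ directly. (That $\sigma$ has such a prefix follows from $\mu(\sigma)<0$, which you can extract from Part~1 applied to the prefix $\rho t^{-1}$: this gives $\mu(\rho)\ge -h+1$, hence $\mu(\sigma)\le -1$; incidentally this also shows your ``$\sigma$ has no $t^{-1}$'' subcase is vacuous.) Replacing your final paragraph with this argument closes the gap.
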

\begin{proof}
  Let $w = w_1w_2$ where $w_1$ is the smallest prefix of $w$ with
  $\mu(w_1) = -h$.  View $w_1$ as a path $f \colon P \to \Gamma$ and
  let $\bar f \colon P \to T$ be the projection of this path to $T$.
  Then $\bar f$ starts at the vertex of $T$ corresponding to the coset
  $\subgp{a}$ and $\bar f$ ends at a vertex of height $-h$ and
  $\bar f$ does not reach the height $-h$ until its final step.  Thus,
  by \Lemref{height_reducing_path}, the final vertex of $\bar f$ is
  the vertex of $T$ corresponding to the coset $t^{-h}\subgp{a}$.
  Thus $w_1 \equiv t^{-h}a^{\ell}$, for some $\ell \in \Z$, and so
  $w_2 \equiv w_1^{-1}w \equiv a^{-\ell}t^h t^{-h}a^k \equiv a^{k -
    \ell}$.  If $w$ had a prefix $w'$ with $\mu(w') < -h$ then $w'$
  would have to be longer than $w_1$ and so we would have
  $w' = w_1w_2'$.  Then $w_2'$ would be a prefix of $w_2$ of height
  $\mu(w_2') = \mu(w') - \mu(w_1) < 0$, which by
  \Lemitmref{notgeo}{downup} would contradict the geodesicity of $w$.

  We now prove that $w = xy$ such that $x$ is ascending and $y$ is
  descending.  If $w$ has no such decomposition then $w$ has a subword
  of the form $t^{-1}w't$.  An innermost such subword has the form
  $t^{-1}a^kt$.  By \Lemitmref{notgeo}{tiak}, we have $|k| \le 1$.
  So, since $t^{-1}t$ is not geodesic, we have
  $w = w_1t^{-1}a^{\epsilon}tw_2$ with $\epsilon = \pm 1$.  We have
  $\mu(w_1t^{-1}a^{\epsilon}) \ge -h$ and so
  $\mu(w_1t^{-1}a^{\epsilon}t) > -h$.  So $\mu(w_2) < 0$ and the
  shortest prefix of $w_2$ of negative height has the form
  $w_2't^{-1}$ with $\mu(w_2') = 0$.  Then, by \Lemref{zeroheight}, we
  have $w_2' \equiv a^k$ for some $k$.  But then
  \[ w_1t^{-1}a^{\epsilon}tw_2't^{-1} \equiv
    w_1t^{-1}tw_2't^{-1}a^{\epsilon} \equiv
    w_1w_2't^{-1}a^{\epsilon} \] and so
  $w_1t^{-1}a^{\epsilon}tw_2't^{-1}$ is a nongeodesic subword of $w$,
  which is a contradiction.
\end{proof}

\begin{lem}
  \lemlabel{notgoingdown} Let $h \ge 1$, let $k \ge 2^h$ and let
  $\epsilon = \pm 1$.  Let $w$ be a geodesic word with
  $w \equiv t^{-h}a^{\epsilon k}$.  Then the first letter of $w$ is
  not $t^{-1}$.
\end{lem}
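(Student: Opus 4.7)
The plan is to argue by contradiction. Suppose $w$ begins with $t^{-1}$; I would first use \Lemref{descent} to show that $w$ must then be entirely descending, and then count the $r$-coordinate contributions of the $a^{\pm 1}$-letters in the $\Z\bigl[\frac{1}{2}\bigr]\times\Z$ model to contradict the hypothesis $k \ge 2^{h}$.

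First I would apply \Lemref{descent} to factor $w = xy$ with $x$ ascending and $y$ descending. An ascending word contains no $t^{-1}$, so if $w$ starts with $t^{-1}$ then $x$ must be empty and $w = y$ is descending; in particular $w$ uses only the letters $t^{-1}$ and $a^{\pm 1}$. Since $\mu(w) = -h$ and $t^{-1}$ is the only height-changing letter available, $w$ contains exactly $h$ copies of $t^{-1}$ and thus has the form
\[ w \;=\; t^{-1}\,\alpha_1\,t^{-1}\,\alpha_2\,\cdots\,t^{-1}\,\alpha_h, \]
where each $\alpha_{i}$ is a (possibly empty) word in $a^{\pm 1}$ (there is no prefix $\alpha_0$ because $w$ starts with $t^{-1}$). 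Because $w$ is reduced, each $\alpha_i$ is a pure power $a^{s_i}$; applying \Lemitmref{notgeo}{tiak} to each subword $t^{-1}\alpha_i = t^{-1}a^{s_i}$ then forces $|s_i| \le 1$ for every $i$.

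Finally I would read off the endpoint of $w$ in the $\Z\bigl[\frac{1}{2}\bigr]\times\Z$ picture. The letters of $\alpha_i$ are executed at height $-i$, so $\alpha_i$ contributes $s_i\cdot 2^{-i}$ to the $r$-coordinate. Since $\phi(w) = \phi(t^{-h}a^{\epsilon k}) = (2^{-h}\epsilon k,\,-h)$, summing the contributions gives
\[ \sum_{i=1}^{h} s_i\, 2^{h-i} \;=\; \epsilon k. \]
Combined with $|s_i|\le 1$, the triangle inequality yields $|k|\le 2^{h}-1$, contradicting $k\ge 2^{h}$.

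The only nonroutine step is the initial reduction to the descending case, and this is immediate from \Lemref{descent}; once the decomposition is in hand, the contradiction is a one-line geometric-series bound, so I expect no real obstacle.
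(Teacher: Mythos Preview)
Your proposal is correct and follows essentially the same argument as the paper: use \Lemref{descent} to conclude $w$ is descending, apply \Lemitmref{notgeo}{tiak} to bound each $a$-block by $|s_i|\le 1$, and sum a geometric series to get $|k|\le 2^h-1$. The only cosmetic difference is that the paper groups consecutive $t^{-1}$'s into blocks $t^{-\ell_i}$ rather than writing out all $h$ of them individually, but the resulting bound is identical.
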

\begin{proof}
  Suppose the first letter of $w$ is $t^{-1}$.  Then, by
  \Lemref{descent}, we see that $w$ is descending.  Hence
  \[ w = t^{-\ell_1}a^{k_1}t^{-\ell_2}a^{k_2}\cdots
    t^{-\ell_m}a^{k_m} \] with $\sum_i \ell_i = h$ and $\ell_i > 0$
  for all $i$.  So we have
  \[ w \equiv t^{-\sum_i \ell_i}a^{2^{L_1}k_1 + 2^{L_2}k_2 + \cdots +
      2^{L_m}k_m} \] where $L_j = \sum_{i > j}\ell_i$ and so
  $\epsilon k = 2^{L_1}k_1 + 2^{L_2}k_2 + \cdots + 2^{L_m}k_m$.  But,
  by \Lemitmref{notgeo}{tiak}, we have $|k_i| \le 1$ for all $i$ and
  so \[ |k| \le 2^{L_1} + 2^{L_2} + \cdots + 2^{L_m} \] with
  \[ 0 = L_m < L_{m-1} < \cdots < L_1 < h \] which implies
  $|k| \le \sum_{j=0}^{h-1}2^j = 2^h - 1$, a contradiction.
\end{proof}

\begin{lem}
  \lemlabel{goingdown} Let $h \ge 1$, let $0 \le k \le 2^h$ and let
  $\epsilon = \pm 1$.  Let $w$ be a geodesic word with
  $w \equiv t^{-h}a^{\epsilon k}$.  Then $w$ is descending and every
  prefix $w'$ of $w$ satisfies $w' \equiv t^{-h'}a^{\epsilon k'}$
  where $0 \le h' \le h$ and $0 \le k' \le 2^{h'}$.
\end{lem}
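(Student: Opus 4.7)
The plan is to apply \Lemref{descent} to decompose $w = xy$ with $x$ ascending and $y$ descending, then show by contradiction that $x$ contains no $t$-letters, and finally verify the prefix condition by an analysis of dyadic partial sums.

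For the first step, I will write $x = a^{j_0} t a^{j_1} t \cdots t a^{j_s}$ and $y = a^{\ell_0} t^{-1} a^{\ell_1} t^{-1} \cdots t^{-1} a^{\ell_{s+h}}$ and suppose $s \ge 1$ toward a contradiction. By \Lemitmref{notgeo}{tiak} the intermediate blocks satisfy $|j_i| \le 1$ for $i < s$ and $|\ell_m| \le 1$ for $m \ge 1$; by \Lemitmref{notgeo}{tati} (and the absence of $tt^{-1}$-backtracks in a geodesic), the central subword $t \cdot a^{j_s + \ell_0} \cdot t^{-1}$ forces $|j_s + \ell_0| \ge 2$. I will compute $\phi(w)$ via $(r,z)(r',z') = (r + 2^z r', z + z')$, equate it with $(\epsilon k / 2^h, -h)$, clear denominators by multiplying by $2^h$, and isolate the dominant term $2^{s+h}(j_s + \ell_0)$ to obtain
\begin{equation*}
  \bigl|\epsilon k - 2^{s+h}(j_s + \ell_0)\bigr| \le \sum_{i=0}^{s-1} 2^{i+h} + \sum_{m=1}^{s+h} 2^{s+h-m} = 2^{s+h+1} - 2^h - 1.
\end{equation*}
The triangle inequality against $|2^{s+h}(j_s + \ell_0)| \ge 2^{s+h+1}$ then gives $k \ge 2^h + 1$, contradicting $k \le 2^h$. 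Hence $s = 0$ and $w$ has only negative powers of $t$.

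For the prefix condition, I will assume $\epsilon = 1$ by symmetry, write $w = a^{\ell_0} t^{-1} a^{\ell_1} \cdots t^{-1} a^{\ell_h}$, and set $T_i = \sum_{j=0}^{i} \ell_j / 2^j$. The prefix ending after the $i$th $a$-block has $\phi$-image $(T_i, -i)$, and represents an element of the form $t^{-i} a^{\epsilon k'}$ with $0 \le k' \le 2^i$ exactly when $T_i \in [0, 1]$. Since $T_h = k/2^h \in [0, 1]$ and $|T_h - \ell_0| \le \sum_{j \ge 1} 1/2^j < 1$, the integer $\ell_0$ must lie in $\{0, 1\}$. For general $i$, the key observation is that $T_{i-1}$ is an integer multiple of $1/2^{i-1}$, so its admissible values in $[0, 1]$ are spaced at $2/2^i$. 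I will argue by contradiction on the smallest $i$ with $T_i \notin [0, 1]$: the case $T_i < 0$ forces $T_{i-1} = 0$ and $\ell_i = -1$, so $|T_h - T_i| < 1/2^i$ gives $T_h < 0$; the symmetric case $T_i > 1$ forces $T_{i-1} = 1$, $\ell_i = 1$, and $T_h > 1$. Both contradict $T_h \in [0, 1]$. Prefixes ending inside an $a^{\ell_i}$-block coincide with those at its endpoints since $|\ell_i| \le 1$.

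The main obstacle is the first step, where the sharp bound $k \ge 2^h + 1$ requires carefully combining the two geometric-series bounds on the intermediate blocks with the crucial input $|j_s + \ell_0| \ge 2$ supplied by \Lemitmref{notgeo}{tati}; it is precisely this interplay that makes the hypothesis $k \le 2^h$ bite. Once $w$ is known to contain no $t$-letters, the prefix analysis reduces to elementary bookkeeping on dyadic partial sums.
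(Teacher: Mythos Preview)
Your argument is correct and takes a genuinely different route from the paper's proof. For the first assertion the paper proceeds by induction on $|w|$: using the Bass-Serre tree it argues that any occurrence of $t$ must precede every $t^{-1}$, writes $w = a^{\eta} t w'$, and then applies the inductive hypothesis to $w' \equiv t^{-(h+1)} a^{-2^h\eta + k}$ to force a forbidden subword $tt^{-1}$ or $ta^{\pm 1}t^{-1}$. You instead invoke \Lemref{descent} once to obtain the ascending--descending form and then run a single direct dyadic computation, isolating the peak contribution $2^{s+h}(j_s+\ell_0)$ and bounding the geometric tails; the resulting inequality $k \ge 2^h+1$ is sharp and makes transparent exactly why the hypothesis $k \le 2^h$ is needed. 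For the prefix assertion the paper introduces the projection $\pi(w') = 2^{h-h'}\ell$ (which is your $2^h T_i$) and appeals to the geodesicity fact that $\pi(w') = \pi(w'')$ forces $w'' = w' t^m$ to rule out the path leaving $[0,2^h]$; your minimal-counterexample argument via the granularity of $T_{i-1}$ together with the tail bound $|T_h - T_i| < 1/2^i$ reaches the same conclusion by purely arithmetic means. Your approach is more computational and self-contained (it does not need the induction or \Lemitmref{notgeo}{downupdown}), while the paper's is more structural; both are valid, and your derivation of $\ell_0 \in \{0,1\}$ from the tail bound is a neat touch that the paper handles differently.
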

\begin{proof}
  Since there is an automorphism of $\BS(1,2)$ fixing $t$ and sending
  $a$ to $a^{-1}$, we may assume that $\epsilon = 1$.  The proof that
  $w$ is descending is by induction on the length of $w$.  If
  $|w| = 1$ then $w = t^{-1}$ and so satisfies the required
  conditions.  Assume now that $|w| > 1$.  Consider the path
  $f \colon P \to T$ followed by $w$ in the Bass-Serre tree $T$.  Let
  $v_1$ and $v_2$ be the initial and final vertices of this path.  The
  shortest path in $T$ from $v_1$ to $v_2$ is labeled $t^{-h}$.  By
  \Lemitmref{notgeo}{downup}, the path $f$ may not traverse an edge
  below $v_2$.  Hence, any instance of $t$ in $w$ corresponds to an
  edge of $T$ which is ascended by $f$ and later descended.  That is,
  the instance of $t$ is the first letter of a subword $tw't^{-1}$ of
  $w$ with $w' \equiv a^{k'}$ for some $k'$.  Then, if $w$ has an
  instance of $t$ then, by \Lemitmref{notgeo}{downupdown}, it must
  occur to the left of any negative power of $t$.  So
  $w = a^{\eta}tw'$ for some $\eta \in \{-1,0,1\}$ and some word $w'$.
  But then
  \[ w' \equiv t^{-1}a^{-\eta}w \equiv
    t^{-1}a^{-\eta}t^{-h}a^{k} \equiv
    t^{-(h+1)}a^{-2^h\eta+k} \] and
  $|-2^h\eta + k| \le 2^h + 2^h = 2^{h+1}$ and so, by
  induction, $w'$ must have a prefix of the form $t^{-1}$ or
  $a^{\epsilon'}t^{-1}$, where $\epsilon'$ is the sign of
  $-2^h\eta + k$.  But then $w$ must contain a subword
  $tt^{-1}$ or $ta^{\epsilon'}t^{-1}$, which are not geodesic.  So we
  see that $w$ is descending.

  It remains to show that every prefix $w'$ of $w$ satisfies the
  condition $(\ast)$ that $w' \equiv t^{-h'}a^{k'}$ where
  $0 \le h' \le h$ and $0 \le k' \le 2^{h'}$.  That
  $w' \equiv t^{-h'}a^{k'}$ with $0 \le h' \le h$ holds because $w$ is
  descending and $f$ does not descend below $v_2$ in $T$.  Assume for
  the sake of finding a contradiction that $w$ does not satisfy
  $(\ast)$.  Note that $w' \equiv t^{-h'}a^{k'}$ satisfies $0 \le k'$
  (respectively $k' \le 2^{h'}$) if and only if
  $w't^{-1} \equiv t^{-h'}a^{k'}t^{-1} \equiv t^{-(h'+1)}a^{2k'}$
  satisfies $0 \le 2k'$ (respectively $2k' \le 2^{h'+1}$).  So the
  shortest prefix of $w$ that violates $(\ast)$ has the form
  $w'a^{\epsilon}$ and the shortest prefix of $w$ of length at least
  $|w'|$ that satisfies $(\ast)$ has the form
  $w'a^{\epsilon}w''a^{-\epsilon}$, where for some $h', h'' \ge 0$,
  either $\epsilon = -1$ and $w' \equiv t^{-h'}$ and
  $w'a^{\epsilon}w''a^{-\epsilon} \equiv t^{-(h'+h'')}$ or
  $\epsilon = 1$ and $w' \equiv t^{-h'}a^{2^{h'}}$ and
  $w'a^{\epsilon}w''a^{-\epsilon} \equiv t^{-(h'+h'')}a^{2^{h'+h''}}$.
  We will prove that $a^{\epsilon}w''a^{-\epsilon} \equiv t^{-h''}$,
  which contradicts geodesicity of $w$ by \Lemitmref{notgeo}{tconvex}.
  In the case $\epsilon = -1$, we have
  \[ a^{\epsilon}w''a^{-\epsilon} \equiv
    (w')^{-1}w'a^{\epsilon}w''a^{-\epsilon} \equiv t^{h'}
    t^{-(h'+h'')} \equiv t^{-h''} \] and, in the case $\epsilon = 1$,
  we have
  \begin{align*}
    a^{\epsilon}w''a^{-\epsilon}
    &\equiv (w')^{-1}w'a^{\epsilon}w''a^{-\epsilon} \\
    &\equiv a^{-2^{h'}}t^{h'} t^{-(h'+h'')}a^{2^{h'+h''}} \\
    &\equiv a^{-2^{h'}} t^{-h''}a^{2^{h'+h''}} \\
    &\equiv t^{-h''}
  \end{align*}
  so we have our contradiction in either case.
\end{proof}

\begin{lem}
  \lemlabel{drift} Let $h \ge 1$, let $0 \le k \le 2^h$ and let
  $\epsilon = \pm 1$.  The following statements describe precisely
  which initial letters a geodesic word
  $w \equiv t^{-h}a^{\epsilon k}$ may have.
  \begin{enumerate}
  \item \itmlabel{small} If $k < \bigl(\frac{2}{3}\bigr)2^h$ then any
    geodesic word $w \equiv t^{-h}a^{\epsilon k}$ has the form
    $w = t^{-1}w'$.
  \item \itmlabel{med} If
    $\bigl(\frac{2}{3}\bigr)2^h < k < \bigl(\frac{5}{6}\bigr)2^h$ then
    any geodesic word $w \equiv t^{-h}a^{\epsilon k}$ has the form
    $w = t^{-1}w'$ or $w = a^{\epsilon}w''$ and there exist geodesics
    of both forms.
  \item \itmlabel{big} If $k > \bigl(\frac{5}{6}\bigr)2^h$ then any
    geodesic word $w \equiv t^{-h}a^{\epsilon k}$ has the form
    $w = a^{\epsilon}w'$.
  \end{enumerate}
\end{lem}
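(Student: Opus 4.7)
The plan is to reduce the claim to an arithmetic comparison of signed binary representations of $k$. By \Lemref{goingdown}, any geodesic $w \equiv t^{-h}a^{\epsilon k}$ is descending; combined with \Lemitmref{notgeo}{tiak}, this puts $w$ into the normal form $w = a^{k_0}t^{-1}a^{\eta_1}t^{-1}\cdots t^{-1}a^{\eta_h}$ with $k_0 \in \Z$ and $\eta_i \in \{-1, 0, 1\}$. Such a word represents $t^{-h}a^{2^h k_0 + \sum_{i=1}^h 2^{h-i}\eta_i}$ and has length $|k_0| + h + \sum_i |\eta_i|$. Matching with $t^{-h}a^{\epsilon k}$ and using $|\sum_i 2^{h-i}\eta_i| \le 2^h - 1$ forces $k_0 \in \{0, \epsilon\}$ for $0 \le k \le 2^h$. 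So the first letter of $w$ is $t^{-1}$ (if $k_0 = 0$) or $a^{\epsilon}$ (if $k_0 = \epsilon$), never $a^{-\epsilon}$ nor $t$, which handles the qualitative part of the lemma.

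Assume $\epsilon = 1$ by symmetry and define
\[
M(k, h) := \min\Bigl\{\sum_{i=1}^h |\eta_i| \sth \eta_i \in \{-1, 0, 1\},\ \sum_i 2^{h-i}\eta_i = k\Bigr\}.
\]
The geodesic length of $t^{-h}a^k$ is $h + \min\bigl(M(k, h),\ 1 + M(2^h - k, h)\bigr)$, where the two options correspond to $k_0 = 0$ and $k_0 = 1$. Setting $G(k, h) := M(k, h) - M(2^h - k, h)$, the three cases of the lemma become $G(k, h) \le 0$ (only $k_0 = 0$ is optimal), $G(k, h) = 1$ (both optimal), and $G(k, h) \ge 2$ (only $k_0 = 1$ is optimal). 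A case split on the first digit $\eta_1$ produces the recursion
\begin{align*}
G(k, h) &= \min\bigl(G(k, h-1) - 1,\ 0\bigr) \quad \text{for } 1 \le k \le 2^{h-1} - 1, \\
G(2^{h-1}, h) &= 0, \\
G(k, h) &= \max\bigl(1 + G(k - 2^{h-1}, h-1),\ 0\bigr) \quad \text{for } 2^{h-1} + 1 \le k \le 2^h - 1.
\end{align*}

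I prove by induction on $h$ the strong three-part claim that, for $1 \le k \le 2^h - 1$, $G(k, h) \le -1$ iff $k < 2^h/3$, $G(k, h) \le 0$ iff $k \le (2/3)\cdot 2^h$, and $G(k, h) \le 1$ iff $k \le (5/6)\cdot 2^h$. The base $h = 1$ is a direct check. For the step, the $\min$-recursion on $[1, 2^{h-1} - 1]$ translates ``$G(k, h) \le -1$'' into ``$G(k, h-1) \le 0$'', which by induction is $k \le (2/3)\cdot 2^{h-1} = (1/3)\cdot 2^h$; the ``$\le 0$'' and ``$\le 1$'' bounds are vacuous in this range. The $\max$-recursion on $[2^{h-1} + 1, 2^h - 1]$, writing $k' = k - 2^{h-1}$, translates ``$G(k, h) \le 0$'' into ``$G(k', h-1) \le -1$'', i.e.\ $k' < (1/3)\cdot 2^{h-1}$, i.e.\ $k \le (2/3)\cdot 2^h$; similarly ``$G(k, h) \le 1$'' becomes ``$G(k', h-1) \le 0$'', i.e.\ $k \le (5/6)\cdot 2^h$. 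The main obstacle is simply verifying the arithmetic identity $c \cdot 2^h - 2^{h-1} = (2c - 1)\cdot 2^{h-1}$ which shifts the three boundaries consistently along the sequence $c_r = (1 + c_{r-1})/2$ with values $1/3, 2/3, 5/6$. With the strong claim established, the lemma is immediate: case \itmref{small} from $G \le 0$, case \itmref{med} from $G = 1$ with both $k_0$ values realizing the minimum length, and case \itmref{big} from $G \ge 2$.
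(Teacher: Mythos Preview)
Your proof is correct and takes a genuinely different route from the paper's.

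The paper argues by direct induction on $h$ with base cases $h=1,2$ checked by enumeration. In the inductive step it peels off a candidate first letter, uses \Lemref{goingdown}, \Lemref{notgoingdown}, and \Lemitmref{notgeo}{rightdownleft} to constrain the next letter of the tail, and then invokes the inductive hypothesis on the tail (which lives at height $h-1$ or $h-2$) to reach a contradiction or to switch between the two forms. For case~\pitmref{med} the two forms are exhibited explicitly via the local identity $a^{\epsilon}t^{-2}a^{-\epsilon} \equiv t^{-1}a^{\epsilon}t^{-1}a^{\epsilon}$.

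You instead translate the problem into minimising the Hamming weight of a length-$h$ signed-binary expansion. After the normal form (from \Lemref{goingdown} and \Lemitmref{notgeo}{tiak}) the geodesic length is $h+\min\bigl(M(k,h),\,1+M(2^h-k,h)\bigr)$, so the first-letter question becomes a question about the sign of $G(k,h)-1$. Your recursion for $G$ is correct (it follows from the one-step recursions $M(k,h)=\min\bigl(M(k,h-1),\,1+M(2^{h-1}-k,h-1)\bigr)$ on the lower half and $M(k,h)=1+M(k-2^{h-1},h-1)$ on the upper half), and the three thresholds $\tfrac13,\tfrac23,\tfrac56$ then cycle through one another under $c\mapsto(1+c)/2$ exactly as you say. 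Two trivial points you leave implicit: on the upper half the ``$G\le -1$'' clause holds vacuously since $G=\max(\cdot,0)\ge 0$ there, and the midpoint $k=2^{h-1}$ has $G=0$ directly; neither affects the argument.

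What each approach buys: yours is cleaner arithmetically, gives an explicit geodesic-length formula as a by-product, and dispenses with \Lemref{notgoingdown} and \Lemitmref{notgeo}{rightdownleft}. The paper's approach keeps the Cayley-graph geometry in view and makes the local move between the two geodesic forms in case~\pitmref{med} concrete, which is in the spirit of the later isometric-cycle analysis.
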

\begin{proof}
  The proof is by induction on $h$.  If $h = 1$ then
  $k \in \{0, 1, 2\}$.  If $k = 0$ then
  $k < \frac{4}{3} = \bigl(\frac{2}{3}\bigr)2^h$ and the only geodesic
  word $w \equiv t^{-h}a^{\epsilon k}$ is $t^{-1}$.  If $k = 1$ then
  $k < \frac{4}{3} = \bigl(\frac{2}{3}\bigr)2^h$ and the only geodesic
  word $w \equiv t^{-h}a^{\epsilon k}$ is $t^{-1}a^{\epsilon}$.  If
  $k = 2$ then $k > \frac{5}{3} = \bigl(\frac{5}{6}\bigr)2^h$ and the
  only geodesic word $w \equiv t^{-h}a^{\epsilon k}$ is
  $a^{\epsilon}t^{-1}$.  So, in all cases, the lemma holds for $h = 1$.

  If $h = 2$ then $k \in \{ 0,1,2,3,4 \}$.  By \Lemref{goingdown}, we
  need only consider descending words whose prefixes are equivalent to
  \[ t^{-h'}a^{k'\epsilon} \] for $h' \in \{ 0,1,2 \}$ and
  $k' \in \{0, 1, \ldots, 2^{h'} \}$.  We may also exclude words with
  backtracks and, by \Lemitmref{notgeo}{tiak}, those containing
  $t^{-1}a^{\ell}$ with $|\ell| \ge 2$ and, by
  \Lemitmref{notgeo}{rightdownleft}, those containing
  $a^{\epsilon}t^{-1}a^{-\epsilon}$.  Then the list of all possible
  geodesics is
  \begin{align*}
    t^{-2}a^{\epsilon \ell} & \\
    t^{-1}a^{\epsilon}t^{-1}a^{\epsilon \ell} &\equiv t^{-2}a^{\epsilon(2 + \ell)} \\
    a^{\epsilon}t^{-2}a^{-\epsilon \ell} &\equiv t^{-2}a^{\epsilon(4 - \ell)}
  \end{align*}
  with $\ell \in \{0,1\}$.  Only one pair of the words in this list,
  namely $a^{\epsilon}t^{-2}a^{-\epsilon}$ and
  $t^{-1}a^{\epsilon}t^{-1}a^{\epsilon}$, are equivalent and they have
  the same length.  Hence the list is exactly the list of all
  geodesics equivalent to $t^{-2}a^{\epsilon k}$ with
  $k \in \{0,1,2,3,4\}$.  Now, if $k < \bigl(\frac{2}{3}\bigr)2^h$
  then $k \in \{0,1,2\}$.  All geodesics equivalent to
  $t^{-2}a^{\epsilon k}$ with $k \in \{0,1,2\}$ are of the first two
  forms in the list which have initial letter $t^{-1}$.  The only $k$
  with $\bigl(\frac{2}{3}\bigr)2^h < k < \bigl(\frac{5}{6}\bigr)2^h$
  is $k = 3$ and $t^{-2}a^{\epsilon 3}$ is equivalent, with $\ell$ set
  to $1$, to both the second form and the third form, which have
  initial letter $t^{-1}$ and $a^{\epsilon}$.  Finally, if
  $k > \bigl(\frac{5}{6}\bigr)2^h$ then $k = 4$ which is equivalent
  only to the last form in the list with $\ell = 0$ and this form has
  initial letter $a^{\epsilon}$.  So we see that the lemma holds for
  $h = 2$.  Going forward we assume that $h > 2$.
  
  Suppose $k < \bigl(\frac{2}{3}\bigr)2^h$.  To show that a geodesic
  $w \equiv t^{-h}a^{\epsilon k}$ has initial letter $t^{-1}$ it
  suffices, by \Lemref{goingdown}, to rule out the possibility that
  $w$ has the form $a^{\epsilon}t^{-1}w'$.  If that were the case
  then, by \Lemref{goingdown}, the first letter of $w'$ would be
  either $t^{-1}$ or $a^{-\epsilon}$ and we would have
  $w' \equiv ta^{-\epsilon}t^{-h}a^{\epsilon k} \equiv
  t^{-(h-1)}a^{-\epsilon(2^h - k)}$.  If $2^h - k > 2^{h-1}$ then, by
  \Lemref{notgoingdown}, the first letter of $w'$ is not $t^{-1}$ and
  so would have to be $a^{-\epsilon}$. But
  $a^{\epsilon}t^{-1}a^{-\epsilon}$ is not geodesic, by
  \Lemitmref{notgeo}{rightdownleft}.  So we have
  $2^{h-1} \ge 2^h - k > \bigl(\frac{2}{3}\bigr)2^{h-1}$.  Then,
  applying (\itmref{med}) and (\itmref{big}) inductively to $h-1$ and
  $2^h - k$ and $-\epsilon$, we see that $w'$ is equivalent to a
  geodesic of the form $a^{-\epsilon}w''$.  But then
  $a^{\epsilon}t^{-1}a^{-\epsilon}w''$ is geodesic and this cannot be
  by \Lemitmref{notgeo}{rightdownleft}.

  Suppose $k > \bigl(\frac{5}{6}\bigr)2^h$.  Let $w$ be a geodesic
  with $w \equiv t^{-h}a^{\epsilon k}$.  Suppose the initial letter of
  $w$ is not $a^{\epsilon}$.  Then, by \Lemref{goingdown}, the first
  letter of $w$ is $t^{-1}$ and the second letter of $w$ is either
  $t^{-1}$ or $a^{\epsilon}$.  By \Lemref{notgoingdown}, the second
  letter of $w$ cannot be $t^{-1}$ and so we have
  $w = t^{-1}a^{\epsilon}w'$ for some $w'$.  Hence
  $w' \equiv a^{-\epsilon}tt^{-h}a^{\epsilon k} \equiv
  t^{-(h-1)}a^{\epsilon(k - 2^{h-1})}$ with $k - 2^{h-1} \le 2^{h-1}$
  and
  $k - 2^{h-1} > \bigl(\frac{5}{6}\bigr)2^h - 2^{h-1} =
  \bigl(\frac{2}{3}\bigr)2^{h-1}$.  So, inductively applying
  (\itmref{med}) and (\itmref{big}), we see that $w'$ can be replaced
  by a geodesic of the form $a^{\epsilon}w''$.  But then
  $t^{-1}a^{\epsilon}a^{\epsilon}w''$ is a geodesic, which is a
  contradiction.

  Suppose
  $\bigl(\frac{2}{3}\bigr)2^h < k < \bigl(\frac{5}{6}\bigr)2^h$.  That
  any geodesic $w \equiv t^{-h}a^{\epsilon k}$ has the form
  $w = t^{-1}w'$ or $w = a^{\epsilon}w''$ follows from
  \Lemref{goingdown}.  Consider first the case where we have a
  geodesic of the form $w = t^{-1}w'$.  Then, by \Lemref{goingdown},
  the initial letter of $w'$ is either $t^{-1}$ or $a^{\epsilon}$.
  But $w' \equiv tw \equiv t^{-(h-1)}a^{\epsilon k}$ with
  $k > \bigl(\frac{2}{3}\bigr)2^h > 2^{h-1}$ and so, by
  \Lemref{notgoingdown}, the initial letter of $w'$ is $a^{\epsilon}$.
  So $w' = a^{\epsilon}u'$ with
  $u' \equiv a^{-\epsilon}w' \equiv t^{-(h-1)}a^{\epsilon(k-2^{h-1})}$
  and
  $k - 2^{h-1} < \bigl(\frac{5}{6}\bigr)2^h - 2^{h-1} =
  \bigl(\frac{2}{3}\bigr)2^{h-1}$.  So, by induction, we have
  $u' = t^{-1}x'$ with $x' \equiv t^{-(h-2)}a^{\epsilon(k-2^{h-1})}$
  and
  $k-2^{h-1} > \bigl(\frac{2}{3}\bigr)2^h - 2^{h-1} =
  \bigl(\frac{2}{3}\bigr)2^{h-2}$.  Then, either by induction if
  $k \le 2^{k-2}$ or otherwise by \Lemref{notgoingdown}, we have that
  $x'$ is equivalent to a geodesic of the form $a^{\epsilon}y'$.  Thus
  we have a geodesic
  $t^{-1}a^{\epsilon}t^{-1}a^{\epsilon}y' \equiv t^{-h}a^{\epsilon
    k}$.  But $a^{\epsilon}t^{-2}a^{-\epsilon}$ has the same length as
  $t^{-1}a^{\epsilon}t^{-1}a^{\epsilon}$ and
  $a^{\epsilon}t^{-2}a^{-\epsilon} \equiv
  t^{-1}a^{\epsilon}t^{-1}a^{\epsilon}$ and so
  $a^{\epsilon}t^{-2}a^{-\epsilon}y'$ is a geodesic with
  $a^{\epsilon}t^{-2}a^{-\epsilon}y' \equiv t^{-h}a^{\epsilon k}$.
  Now, consider the case where we have a geodesic of the form
  $w = a^{\epsilon}w''$.  By \Lemref{goingdown}, the next two letters
  of $w''$ are either $t^{-2}$ or $t^{-1}a^{-\epsilon}$ but
  $a^{\epsilon}t^{-1}a^{-\epsilon}$ is not geodesic, by
  \Lemitmref{notgeo}{rightdownleft}, so we must have
  $w = a^{\epsilon}t^{-2}x''$.  Then
  $x'' \equiv t^2a^{-\epsilon}t^{-h}a^{\epsilon k} \equiv
  t^{-(h-2)}a^{-\epsilon(2^h - k)}$ with
  $2^h - k > 2^h - \bigl(\frac{5}{6}\bigr)2^h =
  \bigl(\frac{2}{3}\bigr)2^{h-2}$.  Then, either by induction if
  $2^h - k \le 2^{k-2}$ or otherwise by \Lemref{notgoingdown}, we have
  that $x''$ is equivalent to a geodesic of the form
  $a^{-\epsilon}y''$.  So we have a geodesic
  $a^{\epsilon}t^{-2}a^{-\epsilon}y'' \equiv t^{-h}a^k$ and we may
  replace $a^{\epsilon}t^{-2}a^{-\epsilon}$ with
  $t^{-1}a^{\epsilon}t^{-1}a^{\epsilon}$ to obtain an equivalent
  geodesic $t^{-1}a^{\epsilon}t^{-1}a^{\epsilon}y''$.
\end{proof}

\begin{lem}
  \lemlabel{akgeos} Let $w$ be a geodesic with $w \equiv a^k$.  Then
  $w = xa^{\ell}y$ such that the following conditions hold.
  \begin{enumerate}
  \item $x$ is ascending and does not have terminal letter $a^{\pm 1}$.
  \item $y$ is descending and does not have initial letter $a^{\pm 1}$.
  \item $\ell$ has the same sign as $k$.
  \item
    $-\bigl(\frac{2}{3}\bigr)2^h < |k| - 2^h|\ell| <
    \bigl(\frac{5}{3}\bigr)2^h$ where $0 \le h = \mu(x) = -\mu(y)$.
  \end{enumerate}
\end{lem}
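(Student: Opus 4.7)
The proof proceeds in three conceptual stages: establishing the structural decomposition of $w$ into ascending and descending parts with a central power of $a$, deriving the quantitative bound in the upward direction, and finally pinning down the sign of $\ell$ together with the asymmetric lower bound.

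For the structural decomposition, I apply \lemitmref{notgeo}{downup} to both $w$ and $w^{-1}$ to conclude that every prefix has nonnegative height and every suffix has nonpositive height, and let $h \geq 0$ denote the maximum attained height. If $h=0$ then $w$ consists solely of $a$-letters, giving $w=a^k$ and the trivial decomposition $x=y=\varepsilon$, $\ell=k$. For $h\geq 1$, the key claim is that every $t$-letter of $w$ occurs before every $t^{-1}$-letter. Arguing by contradiction, an innermost reversal gives a subword $t^{-1}a^m t$ of $w$ with no intervening $t^{\pm 1}$-letter. By \lemitmref{notgeo}{tiak} we have $|m|\leq 1$; $m=0$ is a backtrack, so $m=\pm 1$. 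Since $w$ ends at height $0$ but is at height $\geq 1$ immediately after this subword, a subsequent $t^{-1}$-letter must appear. Picking the first such $t^{-1}$ preceded by a return of the height profile to the level just after the innermost $t$, the intervening word is forced (by minimality of this choice) to be equivalent to a power of $a$ in the Bass-Serre tree. This yields a subword $t^{-1}a^{\pm 1}t\,w_2\,t^{-1}$ with $w_2\equiv a^{k_2}$, contradicting \lemitmref{notgeo}{downupdown}. Hence $w=\tilde x\tilde y$ with $\tilde x$ ascending and $\tilde y$ descending; absorbing boundary $a$-letters into a central block gives $w=xa^\ell y$ satisfying conditions (1) and (2).

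For condition (4), pushing $a$-letters through $t$-letters via $ta=a^2 t$ yields $x\equiv a^P t^h$ and $y\equiv t^{-h}a^Q$ for integer combinations $P,Q$ of the $a$-exponents of $x$ and $y$. Using $t^h a^\ell t^{-h}=a^{2^h \ell}$, we obtain
\begin{equation*}
w \equiv a^P\cdot t^h a^\ell t^{-h}\cdot a^Q = a^{P+2^h\ell+Q},
\end{equation*}
so $k=P+2^h\ell+Q$. Since $x^{-1}$ is a geodesic for $t^{-h}a^{-P}$ starting with $t^{-1}$, applying \lemref{drift} rules out its case \itmref{big}, giving $|P|<(5/6)\,2^h$; analogously $|Q|<(5/6)\,2^h$. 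This provides the upper bound $(5/3)\,2^h$ in (4). For the sign condition (3) and the lower bound $-(2/3)\,2^h$, I argue via length-reducing rewrites. If either fails, then $|P|$ or $|Q|$ lies in the range $\bigl((2/3)\,2^h,(5/6)\,2^h\bigr)$ with sign opposite to $\ell$, placing us in case \itmref{med} of \lemref{drift}, where an alternative geodesic for $y$ (or $x^{-1}$) starts with $a^{-\mathrm{sign}(\ell)}$. Absorbing this leading $a$ into the central block produces a new decomposition $w=xa^{\ell'}y'$ with $|\ell'|<|\ell|$, and a careful comparison of word lengths using \lemref{drift} shows the resulting word is strictly shorter than $w$, contradicting geodesicity.

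The main obstacle is the structural decomposition step: correctly identifying the ``next'' $t^{-1}$-letter after an innermost $t^{-1}a^{\pm 1}t$-configuration so that \lemitmref{notgeo}{downupdown} applies requires a careful height-profile analysis, since intervening $t$-letters can shift the tree position before the next descent. A secondary subtlety lies in verifying that the length-reducing rewrite in the sign/lower-bound step produces a strictly shorter word rather than one of matching length, which relies on the precise threshold $(2/3)\,2^h$ from \lemref{drift}.
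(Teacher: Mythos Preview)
Your structural decomposition is correct but reproves \Lemref{descent} (applied with $h=0$) rather than citing it. Your upper bound for (4) via \Lemref{drift} applied to $y$ and $x^{-1}$ matches the paper's argument and is fine. Note also that once you have $|P|,|Q|<\tfrac{5}{6}\,2^h$ and $|\ell|\ge 2$ (the latter from \Lemitmref{notgeo}{tati}), the sign condition (3) is immediate: $|P+Q|<\tfrac{5}{3}\,2^h<2\cdot 2^h\le 2^h|\ell|$, so $k=P+Q+2^h\ell$ has the sign of $\ell$. Your separate rewriting argument for (3) is unnecessary.

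The genuine gap is in your lower bound for (4). You assert that if the lower bound fails then one of $|P|,|Q|$ lies in the range $\bigl(\tfrac{2}{3}\,2^h,\tfrac{5}{6}\,2^h\bigr)$ with sign opposite to $\ell$, but this implication is not justified. Your cancellation trick (replacing $y$ by an alternative geodesic $a^{-\epsilon}z$ and cancelling against $a^\ell$) only applies when $|Q|$ is in case \pitmref{med} of \Lemref{drift}; it yields $-\epsilon Q<\tfrac{2}{3}\,2^h$ and, symmetrically, $-\epsilon P<\tfrac{2}{3}\,2^h$. Summing gives only $|k|-2^h|\ell|=\epsilon(P+Q)>-\tfrac{4}{3}\,2^h$, which is too weak. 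Nothing in your argument rules out both $-\epsilon P$ and $-\epsilon Q$ sitting just above $\tfrac{1}{3}\,2^h$, where both $|P|,|Q|$ are in case \pitmref{small} and no alternative geodesic beginning with $a^{-\epsilon}$ is available.

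The paper closes this gap by applying \Lemref{drift} not to $y$ but to the longer geodesic subword $a^{\epsilon}y$ of $w$ (available since $|\ell|\ge 2$). One has $a^{\epsilon}y\equiv t^{-h}a^{\epsilon(2^h)+Q}$, and since this subword begins with $a^{\epsilon}$ rather than $t^{-1}$, case \pitmref{small} is excluded, forcing $-\epsilon Q<\tfrac{1}{3}\,2^h$. The symmetric argument on $a^{-\epsilon}x^{-1}$ gives $-\epsilon P<\tfrac{1}{3}\,2^h$, and summing yields the required lower bound.
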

\begin{proof}
  By \Lemref{descent}, we have $w = xa^{\ell}y$ with $x$ ascending and
  $y$ descending.  Choosing $x$ and $y$ so as to maximize the length
  of the $a^{\ell}$ part ensures that the terminal letter of $x$ is
  $t$ and the initial letter of $y_i$ is $t^{-1}$.  Since $\mu(w) = 0$
  and $\mu(a^{\ell}) = 0$, we have $\mu(x) + \mu(y) = \mu(w) = 0$.

  If $h = 0$ then $w = a^k = a^{\ell}$ and so the remaining conditions
  hold.  So let $h \ge 1$.  Then, by \Lemitmref{notgeo}{tati}, we have
  $|\ell| \ge 2$.  Let $\epsilon$ be the sign of $\ell$.  Since $y$ is
  descending, we have $y \equiv t^{-h}a^{\epsilon m}$ for some
  $m \in \Z$.  Since the first letter of $y$ is $t^{-1}$, by
  \Lemref{notgoingdown}, we have $|m| \le 2^h$.  Then, by
  \Lemref{drift}, we have $|m| < \bigl(\frac{5}{6}\bigr)2^h$.  Now
  $a^{\epsilon}y$ is a subword of $w$ and
  $a^{\epsilon}y \equiv a^{\epsilon}t^{-h}a^{\epsilon m} \equiv
  t^{-h}a^{\epsilon (m+2^h)}$ with
  $m + 2^h > 2^h - \bigl(\frac{5}{6}\bigr)2^h > 0$.  Then, either
  $m \ge 0$ so that $m + 2^h \ge 2^h > \bigl(\frac{2}{3}\bigr)2^h$ or
  $m < 0$ so that $2^h > m + 2^h > 0$ and, by \Lemref{drift}, we have
  $m + 2^h > \bigl(\frac{2}{3}\bigr)2^h$.  Hence we have
  $-\bigl(\frac{1}{3}\bigr)2^h < m < \bigl(\frac{5}{6}\bigr)2^h$.
  Applying the exact same arguments to the subwords $x^{-1}$ and
  $a^{-\epsilon}x^{-1}$ of $w^{-1} = y^{-1}a^{-\ell}x^{-1}$ we see
  that $x^{-1} \equiv t^{-h}a^{-\epsilon n}$ with
  $-\bigl(\frac{1}{3}\bigr)2^h < n < \bigl(\frac{5}{6}\bigr)2^h$.
  Hence
  \[ w \equiv a^{\epsilon n}t^h a^{\ell} t^{-h}a^{\epsilon m} \equiv
    a^{\epsilon(n+m + 2^h|\ell|)} \] and so
  $k = \epsilon(n+m + 2^h|\ell|)$ which gives
  \[ -\biggl(\frac{2}{3}\biggr)2^h + 2^h|\ell| < \epsilon k <
    \biggl(\frac{5}{3}\biggr)2^h + 2^h|\ell|\] and so, as $|\ell| \ge 2$
  we see that $\epsilon k > 0$.  Then $k$ has the same sign as $\ell$
  so $\epsilon k = |k|$ and we have
  \[ -\biggl(\frac{2}{3}\biggr)2^h < |k| - 2^h|\ell| <
    \biggl(\frac{5}{3}\biggr)2^h \] as required.
\end{proof}

\subsection{Isometric cycles in \texorpdfstring{$\BS(1,2)$}{BS(1,2)}}

Let $f \colon C \to \Gamma$ be a cycle in $\Gamma$.  The edges of $C$
are naturally directed and labeled by the generators $a$ and $t$ by
pulling back from $\Gamma$.  In this way, each path $P \to C$ is
labeled by a word in $a$, $t$ and their inverses.

A \defterm{traversal} of a cycle $C$ is an immersed path $P \to C$
such that $|P| = |C|$.  A traversal of $C$ is equivalent to a choice
of basepoint and orientation of $C$.  Note that if
$f \colon C \to \Gamma$ is a cycle in $\Gamma$ then the label of any
traversal of $C$ determines $f$ up to translation by elements of
$\BS(1,2)$.

\begin{lem}
  \lemlabel{isomcycles} Let $f \colon C \to \Gamma$ be an
  isometrically embedded cycle of length $|C| > 5$.  Then some
  traversal of $C$ is labeled by a word of the form
  \[ w = a^{\epsilon_1}w_1a^{\epsilon_2}w_2 \] with
  $\epsilon_1,\epsilon_2 \in \{\pm 1\}$ such that $w_i$ satisfies the
  following properties for each $i$.
  \begin{enumerate}
  \item $w_i$ is geodesic with initial letter $t$ and terminal letter
    $t^{-1}$.
  \item $w_i \equiv a^{2k_i}$ with $|k_1 + k_2| \le 1$.
  \end{enumerate}
\end{lem}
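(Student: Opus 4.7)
My approach is to analyze $f$ via its projection to the Bass-Serre tree $T$ and to decompose the corresponding word accordingly. Let $f$ be described by a cyclic word $w$ with $w \equiv 1$, and consider the projected walk $f_T \colon C \to T$. By \Lemref{minheight}, the finite image $f_T(C)$ has a unique minimum-height vertex $v_{\min}$. The preimage $f_T^{-1}(v_{\min})$ breaks $C$ into a cyclic alternation of arcs at $v_{\min}$ (which must use only $a^{\pm 1}$-edges, since only these fix the $T$-coordinate) and excursions above $v_{\min}$ (each starting with $t$ and ending with $t^{-1}$). Rotating the cyclic basepoint into one of these arcs, I will write $w = A_1 E_1 A_2 E_2 \cdots A_n E_n$ with each $A_j = a^{s_j}$ and each $E_j = t M_j t^{-1}$ an excursion.

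The main task is then to show that $|s_j| = 1$ for each $j$ and that $n = 2$. For $|s_j| \le 1$: if $|s_j| \ge 2$, then the length-$3$ substring consisting of the last two $a$-letters of $A_j$ followed by the initial $t$ of $E_j$ fits within $\lfloor |C|/2 \rfloor \ge 3$ and therefore must be geodesic, contradicting \Lemitmref{notgeo}{tiak}. For $|s_j| \ge 1$: if $A_j$ were empty, the terminal $t^{-1}$ of $E_{j-1}$ would be immediately followed by the initial $t$ of $E_j$, giving a backtrack incompatible with $f$ being embedded. For $n \le 2$: a Bass-Serre-theoretic computation using that the walk closes up at $v_{\min}$ yields $E_j \equiv a^{2k_j}$ and hence $M_j \equiv t^{-1} a^{2k_j} t = a^{k_j}$ (using $t^{-1} a^2 t = a$). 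The substring $t^{-1} A_j E_j = t^{-1} a^{\epsilon_j} t M_j t^{-1}$ then exactly matches the non-geodesic pattern of \Lemitmref{notgeo}{downupdown}, so its length $|E_j| + 2$ must exceed $\lfloor |C|/2 \rfloor$. Summing this strict bound over $j$ together with $\sum_j |E_j| = |C| - n$ yields $|C|(n-2) < 2n$ in the even case and $|C|(n-2) < 3n$ in the odd case, both of which preclude $n \ge 3$ once $|C| > 5$ (with the marginal case $|C| = 7$, $n=3$ eliminated by a quick length count using $|E_j| \ge 2$). Finally, $n = 1$ is ruled out since $w \equiv a^{\epsilon_1 + 2k_1} = 1$ would require the odd integer $\epsilon_1 + 2k_1$ to vanish.

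With $n = 2$ and $A_j = a^{\epsilon_j}$, setting $w_i := E_i$ gives the asserted form $w = a^{\epsilon_1} w_1 a^{\epsilon_2} w_2$. The lower bound $|E_j| \ge \lfloor |C|/2 \rfloor - 1$ from the previous paragraph, combined with $|E_1| + |E_2| = |C| - 2$, forces each $|w_i| \le \lfloor |C|/2 \rfloor$, so each $w_i$ is a subword forced to be geodesic by isometricity. The identity $w_i \equiv a^{2 k_i}$ was established above, and $|k_1 + k_2| \le 1$ follows from $w \equiv a^{\epsilon_1 + 2 k_1 + \epsilon_2 + 2 k_2} = 1$, which gives $k_1 + k_2 = -(\epsilon_1 + \epsilon_2)/2 \in \{-1,0,1\}$.

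The main obstacle I anticipate is handling both parities of $|C|$ uniformly in the summing argument (the odd case $|C|=7$ with $n=3$ is tight and requires invoking the lower bound $|E_j| \ge 2$ separately), together with a careful HNN-theoretic verification that $E_j \equiv a^{2k_j}$, which is precisely what licenses the application of \Lemitmref{notgeo}{downupdown} to the subword $t^{-1} A_j E_j$.
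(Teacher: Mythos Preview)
Your proof is correct and follows essentially the same strategy as the paper: project to the Bass--Serre tree, locate the unique minimum-height vertex via \Lemref{minheight}, and decompose the cyclic word at the visits to that vertex, using \Lemitmref{notgeo}{tiak} to force each $A_j = a^{\pm 1}$.  The two substantive differences are in the details.  First, you give an explicit counting argument (via the non-geodesic pattern \Lemitmref{notgeo}{downupdown} applied to $t^{-1}A_jE_j$ and summing) to prove $n = 2$; the paper instead passes directly from ``there is a second visit'' to the two-excursion form, leaving this step implicit.  Your argument here is more careful, and in fact your parity worry is unnecessary: summing the strict real inequality $|E_j| + 2 > |C|/2$ already gives $|C|(n-2) < 2n$, which rules out $n \ge 3$ once $|C| \ge 6$.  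Second, for the geodesicity of $w_i$ the paper takes a slightly slicker route: it observes that $a^{\epsilon_i}w_ia^{\epsilon_{i+1}} \equiv w_ia^{\epsilon_i}a^{\epsilon_{i+1}}$, and the latter is visibly non-geodesic (it either backtracks or contains $t^{-1}a^{\pm 2}$), whence the complementary arc $w_{i+1}$ must be geodesic.  Your length-bound argument reaches the same conclusion.
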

\begin{proof}
  Let $f \colon C \to \Gamma$ be an isometrically embedded cycle.  Let
  $\bar f$ be the projection of $f$ to the Bass-Serre tree $T$.  By
  \Lemref{minheight}, there is a unique vertex $v$ of minimal height
  of $\bar f(C)$.  Since the preimage of $v$ in $\Gamma$ has no
  embedded cycles, we cannot have $\bar f(C) = \{v\}$.  Then $C$ must
  contain a subpath labeled $t^{-1}a^kt$ with the $a^k$ part mapping
  to $v$ under $\bar f$.  Since $f$ is an isometric embedding and
  $|C|> 5$, any subpath of $f$ of length three is geodesic.  Then
  \Lemitmref{notgeo}{tiak} implies that $|k| = 1$.  So
  $t^{-1}a^kt=t^{-1}a^{\pm 1}t$ corresponds to a nonbacktracking path
  in $T$.  It follows, since $f$ is a closed path and $v$ is a
  cutpoint of $T$, that $C$ contains another subpath labeled
  $t^{-1}a^{\pm 1}t$ such that $\bar f$ sends $a^{\pm 1}$ to $v$.  So
  some traversal of $C$ is labeled by a word
  \[ w = a^{\epsilon_1}tu_1t^{-1}a^{\epsilon_2}tu_2t^{-1} \] with
  $\epsilon_1,\epsilon_2 \in \{\pm 1\}$, such that
  $u_i \equiv a^{k_i}$ for some $k_1,k_2 \in \Z$.  Let
  $w_i = tu_it^{-1}$.  Then $w_i \equiv a^{2k_i}$ so we have
  \[ \epsilon_1 + 2k_1 + \epsilon_2 + 2k_2 = 0\] since $w$ is trivial
  in $G$, and this implies $|k_1 + k_2| \le 1$.  Also we have
  \[ a^{\epsilon_i}w_ia^{\epsilon_{i+1}} \equiv
    w_ia^{\epsilon_i}a^{\epsilon_{i+1}} \] which shows that
  $a^{\epsilon_i}w_ia^{\epsilon_{i+1}}$ has the same length and
  endpoints as $w_ia^{\epsilon_i}a^{\epsilon_{i+1}}$.  But
  $w_ia^{\epsilon_i}a^{\epsilon_{i+1}}$ cannot be geodesic since it
  either backtracks or contains $t^{-1}a^k$ with $|k| = 2$ and so
  $a^{\epsilon_i}w_ia^{\epsilon_{i+1}}$ is not geodesic either.
  Hence, since $f$ is an isometric embedding, the complementary path
  $w_{i+1}$ is geodesic.
\end{proof}

\begin{thm}
  \thmlabel{bss} The standard Cayley graph of of $\BS(1,2)$ is
  shortcut.
\end{thm}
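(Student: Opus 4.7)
My plan is to apply \Lemref{isomcycles} to reduce the question to the structured word $w = a^{\epsilon_1} w_1 a^{\epsilon_2} w_2$, and then combine the geodesic normal form of \Lemref{akgeos} with the coordinatization $\phi\colon G\to\Z\bigl[\frac{1}{2}\bigr]\times\Z$ to exhibit an antipodal pair whose $\Gamma$-distance is too small for the cycle to be isometric, forcing a bound on the cycle length.

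Given an isometric cycle $f\colon C\to\Gamma$ of length $|C|>5$, I would invoke \Lemref{isomcycles} to write $w=a^{\epsilon_1}w_1 a^{\epsilon_2}w_2$ with each $w_i$ geodesic, starting with $t$, ending with $t^{-1}$, satisfying $w_i\equiv a^{2k_i}$, and with $|k_1+k_2|\le 1$. Applying \Lemref{akgeos} to each $w_i$ decomposes it further as $w_i=x_ia^{\ell_i}y_i$ with $x_i$ ascending of height $h_i\ge 1$ ending in $t$, $y_i$ descending of height $h_i$ beginning with $t^{-1}$, $\ell_i$ of the same sign as $k_i$, and the quantitative bound $\bigl||2k_i|-2^{h_i}|\ell_i|\bigr|<\frac{5}{3}\cdot 2^{h_i}$. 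In particular $|w_i|\ge 2h_i+|\ell_i|$, so $|C|\ge 2+2h_1+|\ell_1|+2h_2+|\ell_2|$.

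The central step is to choose an antipodal pair $p_1,p_2$ of vertices lying inside the apex runs $a^{\ell_1}$ and $a^{\ell_2}$ of $w_1$ and $w_2$. In coordinates these apex vertices sit at heights $h_1$ and $h_2$, and a direct computation of their $r$-coordinates reveals that, after using $|k_1+k_2|\le 1$ and $2^{h_i-1}\ell_i\approx k_i$ (valid up to $\frac{5}{6}\cdot 2^{h_i}$ by \Lemref{akgeos}), the leading terms telescope to a bounded quantity. Consequently $d_\Gamma\bigl(f(p_1),f(p_2)\bigr)\le h_1+h_2+O(1)$ via a path that descends from $p_1$ to height $0$, crosses the bounded horizontal gap, and climbs to $p_2$.

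On the other hand, the cycle-distance between antipodes is $|C|/2\ge 1+h_1+h_2+\frac{1}{2}(|\ell_1|+|\ell_2|)$, so the isometric condition $d_\Gamma=|C|/2$ forces $|\ell_1|+|\ell_2|$ to be bounded by a universal constant. A further comparison, using the geodesicity of $w_i$ (which pins $h_i$ near the optimum $\log_2|2k_i|$) together with $|2k_i|\le 2^{h_i}(|\ell_i|+\frac{5}{3})$, bounds the $h_i$ and hence bounds $|C|$. The hard part will be the coordinate bookkeeping: the factors $x_i$ and $y_i$ need not be pure towers of $t$ but may interleave $a$-edges, so the telescoping computation must absorb the horizontal slack contributed by these interior edges; the quantitative bound of \Lemref{akgeos} and the geodesicity of subwords of $w$ provide exactly the control needed to close this gap.
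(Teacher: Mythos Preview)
Your setup via \Lemref{isomcycles} and \Lemref{akgeos} matches the paper, but the central distance estimate has a real gap. You assert that the $r$-coordinates of the apex midpoints $p_1$ and $p_2$ differ by a bounded amount, so that descending to height $0$, crossing, and re-ascending yields $d_\Gamma\bigl(f(p_1),f(p_2)\bigr)\le h_1+h_2+O(1)$. But the approximation $2^{h_i-1}\ell_i\approx k_i$ from \Lemref{akgeos} carries an error of order $2^{h_i}$, not $O(1)$: the lemma only says $\bigl||2k_i|-2^{h_i}|\ell_i|\bigr|<\tfrac{5}{3}\cdot 2^{h_i}$. The horizontal displacement contributed by each ascending prefix $x_i$ is likewise $O(2^{h_i})$ (in the notation of the proof of \Lemref{akgeos}, $x_i^{-1}\equiv t^{-h_i}a^{\pm n_i}$ with $|n_i|<\tfrac{5}{6}\cdot 2^{h_i}$). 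After the telescoping of the $k_i$ terms, the residual $r$-gap between $p_1$ and $p_2$ is therefore only $O\bigl(2^{\max(h_1,h_2)}\bigr)$; crossing that at height $0$ costs exponentially many $a$-steps, and even crossing at the optimal height gives at best $d_\Gamma\bigl(f(p_1),f(p_2)\bigr)=O(h_1+h_2)$ with an uncontrolled multiplicative constant, which does not force $|\ell_1|+|\ell_2|$ to be bounded. (There is also no reason the two apex runs should contain an antipodal pair.)

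The paper avoids this by never bounding $d_\Gamma\bigl(f(p_1),f(p_2)\bigr)$ from above. Since $f$ is isometric, one of the two arcs of $C$ between the apex midpoints is itself a geodesic word in $\Gamma$. The paper slides a single $a$-letter across that arc using identities such as $x_2a^{-1}\equiv a^{-2^{h_2}}x_2$ and $y_1a^{\epsilon_2}a^{-2^{h_2}}\equiv a^{-2^{h_2-h_1}}y_1a^{\epsilon_2}$, obtaining an equivalent word of the same length in which the $a^{\ell_i}$-blocks have been shifted. Comparing the $a$-exponents then forces $\ell_1+1\le 2^{h_2-h_1}$, hence $h_2\ge h_1+2$; feeding this back through the bounds of \Lemref{akgeos} yields $2^{h_2}<6$, contradicting $h_1\ge 1$. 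So the paper actually shows there are \emph{no} isometric cycles of length greater than $5$, rather than merely bounding their length.
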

\begin{proof}
  Let $\Gamma$ be the standard Cayley graph of $\BS(1,2)$.  We will
  show that there are no isometrically embedded cycles
  $f \colon C \to \Gamma$ of length $|C| > 5$.  For the sake of
  deriving a contradiction, suppose $f$ is such a cycle.  Then, by
  \Lemref{isomcycles}, some traversal of $C$ is labeled
  \[ w = a^{\epsilon_1}w_1a^{\epsilon_2}w_2 \] such that $w_i$ is
  geodesic with initial letter $t$ and terminal letter $t^{-1}$ and
  $w_i \equiv a^{2k_i}$ with $|k_1 + k_2| \le 1$.  Then, by \lemref{akgeos}, we
  have \[ w_i = x_ia^{\ell_i}y_i \] where $x_i$ is ascending and has
  initial and terminal letter $t$, where $y_i$ is descending and has
  initial and terminal letter $t^{-1}$, where $\ell_i$ has the same
  sign as $k_i$ and where
  \[ -\biggl(\frac{2}{3}\biggr)2^{h_i} < |k_i| - 2^{h_i}|\ell_i| <
    \biggl(\frac{5}{3}\biggr)2^{h_i} \] with
  $0 \le h_i = \mu(x_i) = -\mu(y_i)$.  Since $x_i$ has terminal letter
  $t$ and $y_i$ has initial letter $t^{-1}$, we have $h_i \ge 1$ and,
  by \Lemitmref{notgeo}{tati}, we have $|\ell_i| \ge 2$.

  We may assume that $h_1 \le h_2$ since otherwise we may replace $w$
  with a cyclic permutation.  We must have $|k_i| \ge 1$ since
  otherwise $w_i = 1$ which does not start with $t$.  Hence, as
  $|k_1 + k_2| \le 1$ we must have that $k_1$ and $k_2$ have opposite
  signs.  Since there is an automorphism of $\BS(1,2)$ fixing $t$ and
  mapping $a \mapsto a^{-1}$ we may assume that $k_1 > 0$ and
  $k_2 < 0$.  Then $\ell_1 > 0$ and $\ell_2 < 0$.

  Let $p_i \in C$ be the midpoint of the subpath $a^{\ell_i}$ of $w$.
  Abusing notation we write the two segments of $C$ between $p_1$ and
  $p_2$ as $a^{\frac{1}{2}\ell_1}y_1a^{\epsilon_2}x_2a^{\frac{1}{2}\ell_2}$ and
  $a^{\frac{1}{2}\ell_2}y_2a^{\epsilon_1}x_1a^{\frac{1}{2}\ell_1}$ which may be thought
  of as combinatorial paths in the barycentric subdivision of
  $\Gamma$.  Since $f$ is an isometric embedding, one of these two
  paths must be geodesic in $\Gamma$.  We have
  \[ y_1a^{\epsilon_2}x_2a^{-1} \equiv y_1a^{\epsilon_2}a^{-2^{h_2}}x_2
    \equiv a^{-2^{h_2-h_1}}y_1a^{\epsilon_2}x_2\]
  \[ a^{-1}y_2a^{\epsilon_1}x_1 \equiv
    y_2a^{-2^{h_2}}a^{\epsilon_1}x_1 \equiv
    y_2a^{\epsilon_1}x_1a^{-2^{h_2-h_1}} \] and so
  \[ a^{\frac{1}{2}\ell_1}y_1a^{\epsilon_2}x_2a^{\frac{1}{2}\ell_2} \equiv
    a^{\frac{1}{2}\ell_1 - 2^{h_2-h_1}}y_1a^{\epsilon_2}x_2a^{\frac{1}{2}\ell_2+1}\]
  \[ a^{\frac{1}{2}\ell_2}y_2a^{\epsilon_1}x_1a^{\frac{1}{2}\ell_1} \equiv
    a^{\frac{1}{2}\ell_2+1}y_2a^{\epsilon_1}x_1a^{\frac{1}{2}\ell_1 - 2^{h_2-h_1}}\]
  which, by geodesicity of one of these paths, imply that either
  \[ \Bigl|\frac{1}{2}\ell_1\Bigr| + \Bigl|\frac{1}{2}\ell_2\Bigr| \le \Bigl|\frac{1}{2}\ell_1-2^{h_2-h_1}\Bigr| +
    \Bigl|\frac{1}{2}\ell_2+1\Bigr| \] or
  \[ \Bigl|\frac{1}{2}\ell_2\Bigr| + \Bigl|\frac{1}{2}\ell_1\Bigr| \le \Bigl|\frac{1}{2}\ell_2+1\Bigr| +
    \Bigl|\frac{1}{2}\ell_1-2^{h_2-h_1}\Bigr| \] but these two inequalities are equivalent
  so they must both hold.  Then, using $\ell_1 \ge 0$ and
  $\ell_2 \le -2$, we obtain
  $\ell_1 + 2 \le |\ell_1 - 2^{h_2-h_1+1}|$.  Since $\ell_1 \ge 0$,
  this inequality may only hold if $2^{h_2-h_1+1} > \ell_1$ and so we
  have $\ell_1 + 1 \le 2^{h_2-h_1}$.  Now $\ell_1 \ge 2$ and so we
  have $h_2 \ge h_1 + 2$.  The following computation makes use of
  various inequalities which have been established thus far in this
  proof.
  \begin{align*}
    \biggl(\frac{2}{3}\biggl)2^{h_2-2} + 2^{h_2}
    &\ge \biggl(\frac{2}{3}\biggr)2^{h_1} + 2^{h_2-h_1}\cdot 2^{h_1} \\
    &\ge \biggl(\frac{2}{3}\biggr)2^{h_1} + (\ell_1 + 1)2^{h_1} \\
    &= \biggl(\frac{5}{3}\biggr)2^{h_1} + 2^{h_1}|\ell_1| \\
    &> |k_1|
      \ge |k_2| - 1
      > -\biggl(\frac{2}{3}\biggr)2^{h_2} + 2^{h_2}|\ell_2| - 1
  \end{align*}
  So, as $|\ell_2| \ge 2$, we have
  \[ \biggl(\frac{2}{3}\biggl)2^{h_2-2} + 2^{h_2} >
    -\biggl(\frac{2}{3}\biggr)2^{h_2} + 2^{h_2}\cdot 2 - 1 \] which we
  manipulate to obtain the equivalent inequality $2^{h_2} < 6$.  Then
  $h_2 \le 2$, which implies that $h_1 = 0$.  But this is a
  contradiction as we established above that $h_1 \ge 1$.
\end{proof}

\subsection{A Cayley graph of \texorpdfstring{$\BS(1,2)$}{BS(1,2)}
  that is not shortcut}

We now turn our attention to a different generating set of $\BS(1,2)$.
Let $\Gamma$ be the Cayley graph of $\BS(1,2)$ with the generating set
$\{a,t,\tau\}$ where $\tau = t^2$.

\begin{lem}
  \lemlabel{powtsum} Let $k \ge 1$ and let $0 \le z_{\max} \le k$.
  Suppose
  \[ \sum_{z = -m}^{z_{\max}} \alpha_z 2^{z} = 2^k \pm 1 \] where
  $\alpha_z \in \Z$ and $m \ge 0$.
  \begin{itemize}
  \item If $z_{\max} = 0$ then
    $\sum_z |\alpha_z| \ge 2^{k - z_{\max}} - 1$
  \item If $z_{\max} = 1$ then
    $\sum_z |\alpha_z| \ge 2^{k - z_{\max}}$
  \item If $z_{\max} \ge 2$ then
    $\sum_z |\alpha_z| \ge 2^{k - z_{\max}} + 1$
  \end{itemize}
\end{lem}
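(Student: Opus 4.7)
The plan is to reduce first to the case $m = 0$, and then to induct on $z_{\max}$. For the reduction, I will substitute $\beta_j = \alpha_{j-m}$, so that the hypothesis becomes $\sum_{j=0}^{z_{\max}+m} \beta_j 2^j = (2^k \pm 1) 2^m$ with the same total weight $\sum_j |\beta_j| = \sum_z |\alpha_z|$. Since the right-hand side is divisible by $2^m$, so is $\sum_{j<m} \beta_j 2^j$, and hence $c := \sum_{j<m} \beta_j 2^{j-m}$ is an integer. Setting $\beta'_m = \beta_m + c$, $\beta'_j = 0$ for $j<m$, and $\beta'_j = \beta_j$ otherwise gives an equivalent representation whose weight is at most $\sum_j |\beta_j|$, because $|c| \le \sum_{j<m} |\beta_j|$ by the triangle inequality and $2^{j-m} \le 1$. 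Re-indexing reduces the lemma to the statement that any representation of the form $\sum_{z=0}^{z_{\max}} \alpha_z 2^z = 2^k \pm 1$ satisfies the stated lower bound on $\sum_z |\alpha_z|$.

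For $z_{\max} = 0$ the identity forces $|\alpha_0| = 2^k \pm 1 \ge 2^k - 1$. For $z_{\max} = 1$ the triangle inequality gives $|2^k \pm 1| \le 2\sum |\alpha_z|$, and integrality of $\sum |\alpha_z|$ yields $\sum |\alpha_z| \ge \lceil (2^k - 1)/2 \rceil = 2^{k-1}$.

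For $z_{\max} \ge 2$ I will induct on $z_{\max}$ by isolating $\alpha_0$. Since $N := 2^k \pm 1$ is odd, $\alpha_0$ is odd and $|\alpha_0| \ge 1$, while the remaining coefficients satisfy $\sum_{z=0}^{z_{\max}-1} \alpha_{z+1} 2^z = (N - \alpha_0)/2$. When $|\alpha_0| = 1$, the reduced target lies in $\{2^{k-1}, 2^{k-1}+1, 2^{k-1}-1\}$; for $2^{k-1}\pm 1$ I will apply the inductive hypothesis at $k-1$ and $z_{\max}-1$, while for $2^{k-1}$ itself the elementary single-power bound $\sum_{z\ge 1}|\alpha_z| \ge 2^{k-1}/2^{z_{\max}-1} = 2^{k-z_{\max}}$ suffices; in each subcase, combining with $|\alpha_0| = 1$ yields the required $2^{k-z_{\max}} + 1$. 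When $|\alpha_0| \ge 3$, the bound $\sum_{z\ge 1}|\alpha_z| \ge |N - \alpha_0|/2^{z_{\max}}$ together with $|\alpha_0|(2^{z_{\max}}-1) \ge 2^{z_{\max}}+1$ (elementary for $|\alpha_0|\ge 3$ and $z_{\max} \ge 2$) closes the subcase $|\alpha_0| \le |N|$, while the degenerate subcase $|\alpha_0| > |N|$ is immediate since then $|\alpha_0| \ge 2^k + 2 > 2^{k-z_{\max}} + 1$.

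The main subtlety will be the boundary subcase $(N - \alpha_0)/2 = 2^{k-1}$ inside the $|\alpha_0| = 1$ branch: since $2^{k-1}$ is not itself of the form $2^{k'}\pm 1$, the inductive hypothesis does not apply to it directly, but the elementary single-power bound is exactly sharp enough to supply the extra ``$+1$'' that distinguishes the $z_{\max} \ge 2$ bound from the $z_{\max} = 1$ bound. Propagating this ``$+1$'' cleanly through each inductive step, without losing it to rounding, is the main technical point.
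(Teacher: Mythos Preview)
Your plan is correct and would yield a complete proof, but it follows a genuinely different route from the paper's argument. The paper handles general $m$ throughout; for $z_{\max}\in\{0,1\}$ it uses the same crude bound $|2^k\pm1|\le 2^{z_{\max}}\sum_z|\alpha_z|$ you use, while for $z_{\max}\ge2$ it argues extremally: pick a representation minimizing $\sum_z|\alpha_z|$, observe that $|\alpha_z|\le1$ for all $z<z_{\max}$ (else trade two copies of $2^z$ for one of $2^{z+1}$), note that oddness of $2^k\pm1$ forces some nonzero $\alpha_z$ with $z\le0$, and then split on whether $\alpha_{z_{\max}}\ge 2^{k-z_{\max}}$; in the remaining case the constraint $\sum_{z<z_{\max}}\alpha_z2^z\ge 2^{z_{\max}}-1$ with all $|\alpha_z|\le1$ forces $\alpha_z=1$ for $0\le z<z_{\max}$, giving $\sum_z|\alpha_z|\ge 2^{k-z_{\max}}-1+z_{\max}$.

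By contrast, you first absorb all negative-index terms into $\alpha_0$ to reduce to $m=0$, and then run a descent on $(k,z_{\max})$ by stripping off $\alpha_0$ and halving. Your approach has the virtue that the $m=0$ reduction cleanly localizes the ``oddness'' issue to a single coefficient, and the induction is entirely mechanical; the price is the three-way split on $(N-\alpha_0)/2\in\{2^{k-1},2^{k-1}\pm1\}$ and the separate treatment of $|\alpha_0|\ge3$. The paper's extremal argument avoids induction altogether and gives a one-shot structural description of the minimizer, at the cost of a slightly more delicate endgame. One tiny slip: in your degenerate subcase you write $|\alpha_0|\ge 2^k+2$, but when $|N|=2^k-1$ one only gets $|\alpha_0|\ge 2^k+1$ from oddness; this still suffices since $2^k+1\ge 2^{k-z_{\max}}+1$.
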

\begin{proof}
  If $z_{\max} = 0$ then
  \[ 2^k - 1 \le 2^k \pm 1 = \Bigl|\sum_{z = -m}^{z_{\max}}
    \alpha_z 2^{z}\Bigr| \le \sum_{z = -m}^{z_{\max}} |\alpha_z|
    2^{z_{\max}} = \sum_{z = -m}^{z_{\max}} |\alpha_z| \]
  and so we have $\sum_z |\alpha_z| \ge 2^{k - z_{\max}} - 1$.

  Suppose $z_{\max} = 1$ and $\sum_z |\alpha_z| < 2^{k - z_{\max}}$.
  Then $\sum_z |\alpha_z| \le 2^{k - z_{\max}} - 1$ and so
  \[ \biggl|\sum_{z = -m}^{z_{\max}} \alpha_z 2^{z}\biggr| \le \sum_{z
      = -m}^{z_{\max}} |\alpha_z| 2^{z_{\max}} \le (2^{k - z_{\max}}
    - 1) \cdot 2^{z_{\max}} = 2^k - 2 \] which is a contradiction.
  So we have $\sum_z |\alpha_z| \ge 2^{k - z_{\max}}$.

  Now, suppose $z_{\max} \ge 2$.  Among all $m \ge 0$ and
  $(\alpha_z)_z$ that satisfy
  \[ \sum_{z = -m}^{z_{\max}} \alpha_z 2^{z} = 2^k \pm 1 \] choose an
  $m \ge 0$ and $(\alpha_z)_z$ that minimizes $\sum_z |\alpha_z|$.  We
  will show that $\sum_z |\alpha_z| \ge 2^{k - z_{\max}} + 1$.  We
  claim that for $z < z_{\max}$, we have $|\alpha_z| \le 1$.  Indeed,
  if $|\alpha_z| \ge 2$, then we can replace $\alpha_z$ with
  $\alpha_z - \epsilon 2$ and $\alpha_{z+1}$ with
  $\alpha_{z+1} + \epsilon$, where $\epsilon$ is the sign of
  $\alpha_z$.  This reduces $\sum_z |\alpha_z|$ while preserving
  $\sum_z \alpha_z 2^{z}$ and so contradicts minimality of $m$ and
  $(\alpha_z)_z$.  Since $2^k \pm 1$ is not even, there must be some
  $\alpha_z \neq 0$ with $z \le 0$.  So if
  $\alpha_{z_{\max}} \ge 2^{k - z_{\max}}$ then
  $\sum_z |\alpha_z| \ge 2^{k - z_{\max}} + 1$.  So we may assume that
  $\alpha_{z_{\max}} < 2^{k - z_{\max}}$.  Say
  $\alpha_{z_{\max}} = 2^{k - z_{\max}} - \ell$ with $\ell \ge
  1$. Then
  $2^k \pm 1 - \alpha_{z_{\max}}2^{z_{\max}} \ge \ell 2^{z_{\max}} -
  1$ and so
  $\sum_{z = -m}^{z_{\max} - 1} \alpha_z 2^{z} \ge \ell 2^{z_{\max}} -
  1$.  But, since $\alpha_z \le 1$ for $z < z_{\max}$ we have
  \[ \ell 2^{z_{\max}} - 1 \le \sum_{z = -m}^{z_{\max} - 1} \alpha_z
    2^{z} \le \sum_{z = -m}^{z_{\max} - 1} 2^{z} = 2^{z_{\max}} -
    2^{-m} \le 2^{z_{\max}} \] and so
  $(\ell - 1)4 \le (\ell - 1)2^{z_{\max}} \le 1$ which implies
  $\ell = 1$.  Thus if $\alpha_{z'} < 1$ for some $z'$ with
  $0 \le z' < z_{\max}$, then we would have
  \begin{align*}
    2^{z_{\max}} - 1
    &\le \sum_{z = -m}^{z_{\max} - 1} \alpha_z 2^{z} \\
    &= \sum_{z = -m}^{-1} \alpha_z 2^{z}
      + \sum_{\substack{z = 0 \\ z \neq z'}}^{z_{\max} - 1} \alpha_z 2^{z}
    + \alpha_{z'} 2^{z'} \\
    &\le \sum_{z = -m}^{-1} 2^{z}
      + \sum_{\substack{z = 0 \\ z \neq z'}}^{z_{\max} - 1} 2^{z} \\
    &= 1 - 2^{-m} + 2^{z_{\max}} - 1 - 2^{z'} \\
    &< 2^{z_{\max}} - 1
  \end{align*}
  a contradiction.  Hence $\alpha_z = 1$ for $0 \le z < z_{\max}$ so
  that
  $\sum_z |\alpha_z| \ge 2^{k - z_{\max}} - 1 + z_{\max} \ge 2^{k -
    z_{\max}} + 1$ as required.
\end{proof}

\begin{lem}
  \lemlabel{attaugeos} Let $k$ and $\ell$ be nonnegative integers with
  $\ell \le k$ and $k \ge 2$.  Then the word
  \[ w = \tau^{\ell}a\tau^{-k}a^{\pm 1}\tau^{k - \ell} \] describes a
  geodesic in $\Gamma$.
\end{lem}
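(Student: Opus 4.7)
My plan is to prove that every word $w'$ in the generators $\{a^{\pm 1},t^{\pm 1},\tau^{\pm 1}\}$ representing the same element as $w$ satisfies $|w'| \ge 2k+2$, which matches $|w| = \ell + 1 + k + 1 + (k-\ell) = 2k+2$. The first step will be to compute $\phi(w) = (2^{2\ell-2k}(2^{2k}\pm 1),0)$ directly using the group law on $\Z[\tfrac{1}{2}]\times\Z$, where $\phi$ is the bijection defined after \Lemref{normform}. Set $d = 2k - 2\ell \ge 0$ for convenience.

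Next I view $w'$ as a walk in $\Z[\tfrac{1}{2}]\times\Z$ starting at $(0,0)$ and ending at $\phi(w)$. Let $n_a, n_t, n_\tau$ count the three kinds of generator occurrences. Each $a^{\pm 1}$ letter occurring at height $z_i$ with sign $\epsilon_i$ contributes $\epsilon_i 2^{z_i}$ to the first coordinate, while $t^{\pm 1}$ and $\tau^{\pm 1}$ change only the height, by $\pm 1$ and $\pm 2$ respectively. Substituting $z'_i = z_i + d$ turns the endpoint condition into
\[ \sum_i \epsilon_i 2^{z'_i} = 2^{2k} \pm 1. \]
Let $z'_{\max}, z'_{\min}$ be the extrema of the $z'_i$, so that the actual heights extrema at $a$-positions are $z'_{\max}-d$ and $z'_{\min}-d$. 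Since the walk starts and ends at height $0$ and must visit every height $z_i$, a standard up-and-down argument gives $n_t + 2n_\tau \ge 2\max(0, z'_{\max}-d) + 2\max(0, d - z'_{\min})$, and hence $n_t + n_\tau \ge \max(0, z'_{\max}-d) + \max(0, d - z'_{\min})$.

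The proof concludes by case analysis on $z'_{\max}$ and $z'_{\min}$, bounding $n_a$ by \Lemref{powtsum} applied with $k$ replaced by $2k$. The tight case is $z'_{\max} \ge \max(2,d)$ with $z'_{\min} = 0$, the latter being forced whenever $z'_{\min} \ge 0$ by the parity $\sum \epsilon_i 2^{z'_i} \equiv 1 \pmod 2$. In this case one obtains
\[ |w'| = n_a + n_t + n_\tau \ge (2^{2k-z'_{\max}}+1) + (z'_{\max}-d) + d = 2^{2k-z'_{\max}} + z'_{\max} + 1, \]
and the elementary inequality $2^j \ge j+1$ with $j = 2k-z'_{\max}$ gives $|w'| \ge 2k+2$.

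The remaining subcases all yield $|w'| \ge 2k+2$ more easily: a negative $z'_{\min}$ forces an extra downward excursion and adds at least $1$ to $n_t + n_\tau$; the range $0 \le z'_{\max} < d$ forces $2^{2k-z'_{\max}} \ge 2^{2\ell+1}$ while $n_t+n_\tau \ge d$, whence $|w'| \ge 2^{2\ell+1}+1+d$, sufficient by $2^{2\ell+1} \ge 2\ell+1$; the case $z'_{\max} > 2k$ forces $n_a \ge 2$ (a top term plus at least one canceling term) and, combined with the parity-forced $z'_{\min} = 0$, yields $n_t+n_\tau \ge z'_{\max}$; the cases $z'_{\max} \in \{0,1\}$ or $z'_{\max} < 0$ make $n_a$ exponentially large in $k$. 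The main technical nuisance will be orchestrating the case split cleanly, especially pinning down $z'_{\min}=0$ in the tight subcase via the mod-$2$ parity argument and tracking $n_a$ in the boundary case $z'_{\max} < d$.
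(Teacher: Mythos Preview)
Your proposal is correct and follows essentially the same strategy as the paper's proof: translate to $\Z[\tfrac12]\times\Z$, separate horizontal ($a$) from vertical ($t,\tau$) moves, bound the horizontal count via \Lemref{powtsum} with $k$ replaced by $2k$, bound the vertical count by an up--and--down argument, and finish with a case split on the maximal height together with the inequality $2^j\ge j+1$.

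The only real difference is normalization. The paper translates the basepoint so that the path runs from $(0,2(k-\ell))$ to $(4^k\pm 1,2(k-\ell))$ (the printed ``$k-\ell$'' is a typo for $2(k-\ell)$); this makes the target sum $\sum\alpha_z 2^z=2^{2k}\pm 1$ appear without your shift $z'_i=z_i+d$. The paper also takes $z_{\max}$ over \emph{all} vertices of the walk rather than just the $a$-positions, which lets it organize the case split more compactly as $z_{\max}\ge 2k$, $z_{\max}=0$, $z_{\max}=1$, and $2\le z_{\max}<2k$, without your separate subcases for $z'_{\max}<d$ and $z'_{\min}<0$. Your finer bookkeeping is not wrong, just slightly more laborious; in particular your ``$0\le z'_{\max}<d$'' subcase silently uses the $+1$ from the $z_{\max}\ge 2$ branch of \Lemref{powtsum}, so when you write it out make sure that subcase is explicitly restricted to $z'_{\max}\ge 2$, with $z'_{\max}\in\{0,1\}$ handled by your already-listed exponential bound.
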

\begin{proof}
  Consider the bijection
  $\phi \colon \BS(1,2) \to \Z\bigl[\frac{1}{2}\bigr] \times \Z$
  described near the beginning of \Secref{bsgp}.  Then, under this
  bijection, $w$ describes a path from $\bigl(0,2(k - \ell)\bigr)$ to
  $\bigl(4^k \pm 1, 2(k - \ell)\bigr)$.  Indeed, following the
  description of the directed labeled edges we have the following.
  \begin{align*}
    \bigl(0,2(k-\ell)\bigr) &\xrightarrow{\tau^{\ell} = t^{2\ell}} (0,2k) \\
    &\xrightarrow{a} (2^{2k},2k) = (4^k,2k) \\
    &\xrightarrow{\tau^{-k} = t^{-2k}} (4^k,0) \\
    &\xrightarrow{a^{\pm 1}} (4^k \pm 1,0) \\
    &\xrightarrow{\tau^{k-\ell} = t^{2(k-\ell)}} \bigl(4^k \pm 1,2(k-\ell)\bigr)
  \end{align*}
  Consider any path $(r_j,z_j)_{j=0}^m$ from $\bigl(0,2(k - \ell)\bigr)$ to
  $\bigl(4^k \pm 1, 2(k - \ell)\bigr)$ following edges of the Cayley
  graph.  It will suffice to show that $m \ge 2k+2$, since
  $2k+2 = |w|$.  For each $j$, either
  \[ (r_{j+1},z_{j+1}) - (r_j,z_j) = (\pm 2^{z_j},0)\]
  which we call a \defterm{horizontal step} or 
  \[ (r_{j+1},z_{j+1}) - (r_j,z_j) =
    \begin{cases}
      (0,\pm 1) \\
      (0,\pm 2)
    \end{cases} \] which we call a \defterm{vertical step}.  Since
  $4^k\pm 1$ is not a power of $2$, there must be at least two
  horizontal steps in $(r_j,z_j)_j$.  Moreover, since $4^k \pm 1$ is
  not even, we must have $z_j \le 0$ for some $j$.  Let
  $z_{\max} = \max_j z_j$.  By joining consecutive $(r_j,z_j)$ by
  horizontal or vertical segments in
  $\R^2 \supset \Z\bigl[\frac{1}{2}\bigr] \times \Z$ we extend
  $(r_j,z_j)_{j=0}^m$ to a piecewise affine map $f \colon P \to \R^2$
  where $|P| = m$.  The projection $P \to \R$ of this map onto the
  second component captures the vertical behaviour of $f$.  We
  collapse any edges of $P$ that map to points under $P \to \R$ to
  obtain a map $\bar f \colon \bar P \to \R$ where $|\bar P|$ is equal
  to the number of vertical steps of $(r_j,z_j)_j$.  The map $\bar f$
  is $2$-Lipschitz and $0, 2(k - \ell), z_{\max} \in \bar f(\bar P)$
  with $z_{\max} \ge 2(k-\ell)$ and $0 \le 2(k - \ell)$.  Then
  \[ \Bigl|\bar f^{-1}\bigl([k - \ell,\infty)\bigr)\Bigr| \ge
    2\cdot\frac{z_{\max} - 2(k-\ell)}{2} \] and
  \[ \Bigl|\bar f^{-1}\bigl((-\infty, k - \ell]\bigr)\Bigr| \ge
    2\cdot\frac{2(k-\ell)}{2} \] where $\bigl|\bar f^{-1}(I)\bigr|$ is
  the sum of the lengths of all maximal segments of $f^{-1}(I)$. Hence
  $|\bar P| \ge z_{\max}$ and so $(r_j,z_j)_j$ takes at least
  $z_{\max}$ vertical steps.  Then, since there must also be at least
  two horizontal steps, we see that if $z_{\max} \ge 2k$ then
  $m \ge 2k + 2$.  So we may assume that $z_{\max} < 2k$.

  We split into three cases: $z_{\max} = 0$, $z_{\max} = 1$ and
  $z_{\max} \ge 2$.  If $z_{\max} = 0$ then, by \Lemref{powtsum},
  there are at least $2^{2k} - 1$ horizontal steps.  So it suffices to
  show that $2^{2k} - 1 \ge 2k + 2$ but, since $k \ge 2$, this follows
  from the fact that $2^x \ge x+3$ for all $x \ge 4$.

  If $z_{\max} = 1$ then, by \Lemref{powtsum}, there are at least
  $2^{2k-1}$ horizontal steps.  There will also be at least
  $z_{\max} = 1$ vertical step.  But a closed path cannot contain a
  single vertical step so there are at least $2$ vertical steps.  So
  it suffices to show that $2^{2k - 1} + 2 \ge 2k + 2$ but, since
  $k \ge 2$, this follows from the fact that $2^x \ge x+1$ for all
  $x \ge 1$.

  If $z_{\max} \ge 2$ then, by \Lemref{powtsum}, there are at least
  $2^{2k - z_{\max}} + 1$ horizontal steps.  There are also at least
  $z_{\max}$ vertical steps.  So it suffices to show that
  $2^{2k - z_{\max}} + 1 + z_{\max} \ge 2k + 2$ but, since
  $z_{\max} < 2k$, this follows from the fact that $2^x \ge x+1$ for
  all $x \ge 1$.
\end{proof}

\begin{lem}
  \lemlabel{attaugc} The word
  \[ w = a \tau^k a \tau^{-k} a^{-1} \tau^k a^{-1} \tau^{-k} \]
  describes an isometric cycle in $\Gamma$ for all $k \ge 1$.
\end{lem}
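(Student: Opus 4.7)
The plan is to apply \Corref{isomisomemb} to reduce the isometric-cycle condition to showing $d_\Gamma(f(p),f(q)) \ge \frac{|C|}{2} = 2k+2$ for every antipodal pair of points on the cycle $f\colon C \to \Gamma$ described by $w$. By a standard argument, it suffices to verify this bound for antipodal vertex pairs $u,v \in C^0$: once the bound holds on vertices, the bound for arbitrary point pairs follows because $f$ maps each edge isometrically, so $d_\Gamma(f(x),f(y))$ for two interior edge points is a minimum over vertex-route distances through the four endpoints of the edges containing $x$ and $y$.

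First I would use the bijection $\phi \colon \BS(1,2) \to \Z\bigl[\frac{1}{2}\bigr]\times\Z$ introduced earlier in this section to locate the eight ``corner'' vertices at the transitions between the blocks of $w$; a direct calculation gives $P_0=(0,0)$, $P_1=(1,0)$, $P_2=(1,2k)$, $P_3=(1+2^{2k},2k)$, $P_4=(1+2^{2k},0)$, $P_5=(2^{2k},0)$, $P_6=(2^{2k},2k)$, $P_7=(0,2k)$, and confirms that the cycle closes at $P_0$. The antipodal vertex pairs then split into two classes pairing the four $\tau^{\pm k}$-segments of $w$: for $0 \le i \le k$, the vertex at position $1+i$ with image $(1,2i)$ is antipodal to the vertex at position $3+2k+i$ with image $(2^{2k},2i)$ (class~1), and the vertex at position $2+k+i$ with image $(1+2^{2k},2k-2i)$ is antipodal to position $4+3k+i$ with image $(0,2k-2i)$ (class~2).

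The main step is to show that, for each antipodal pair, one of the two cycle-segments of length $2k+2$ joining them is a geodesic in $\Gamma$; since left-multiplication is an isometry of $\Gamma$, this geodesicity depends only on the word describing the segment, not on the starting vertex. In class~1 the cycle-segment through $P_2,P_3,P_4$ spells $\tau^{k-i} a \tau^{-k} a^{-1} \tau^i$, which is exactly the model word $\tau^\ell a \tau^{-k} a^{\pm 1} \tau^{k-\ell}$ of \Lemref{attaugeos} with $\ell = k-i$ and middle letter $a^{-1}$. In class~2 the cycle-segment through $P_4,P_5,P_6,P_7$ spells $\tau^{i-k} a^{-1} \tau^k a^{-1} \tau^{-i}$, whose reversal $\tau^i a \tau^{-k} a \tau^{k-i}$ is the model word with $\ell = i$ and middle letter $a$; since reversing a word preserves geodesicity, the original segment is geodesic as well. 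Invoking \Lemref{attaugeos} then yields a geodesic of length $2k+2$ in each class, so $d_\Gamma(f(u),f(v)) = 2k+2$ as required.

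The main obstacles are the case bookkeeping needed to match each cycle-segment to an instance of the model word of \Lemref{attaugeos} (and, in class~2, to pass through a reversal), together with the boundary case $k = 1$, which lies outside the $k \ge 2$ hypothesis of \Lemref{attaugeos} and will have to be treated by direct inspection of the resulting short cycle.
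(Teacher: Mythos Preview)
Your approach is essentially the paper's: exhibit each half-cycle as an instance of the geodesic word of \Lemref{attaugeos} (up to inversion), then conclude isometry. The paper compresses your explicit case split by noting that, up to inversion \emph{and} the automorphism fixing $t$ and sending $a\mapsto a^{-1}$, every cyclic subword of length $2k+2$ has the required form; it then invokes \Propref{aadist} rather than \Corref{isomisomemb}, which sidesteps your reduction-to-vertices step entirely. Your reduction is in fact valid (the antipodal vertex equalities, together with $1$-Lipschitzness and the triangle inequality, force the near-antipodal vertex distances to be $2k+1$, which is what the four-endpoint minimum needs), but \Propref{aadist} gets there in one line.

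Your caution about $k=1$ is well placed and is actually a point the paper glosses over: for $k=1$ the cyclic subword $\tau a\tau^{-1}a^{-1}$ represents $a^{3}$, which has word length $3<4$ in $\{a,t,\tau\}$, so the cycle of length $8$ is \emph{not} isometric. The statement should read $k\ge 2$; since \Thmref{bsns} only needs arbitrarily long isometric cycles, this is harmless for the application, but your ``direct inspection'' would have uncovered it.
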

\begin{proof}
  The word $w$ has length $4k+4$.  After possible inversion and/or the
  application of the automorphism of $\BS(1,2)$ fixing $t$ and sending
  $a \mapsto a^{-1}$, the cyclic subwords of $w$ of length $2k+2$ are
  all of the form in \Lemref{attaugeos} and so are geodesic.  Hence,
  by \Propref{aadist}, the word $w$ describes an isometric cycle in
  $\Gamma$.
\end{proof}

Then we have the following theorem and we see that the shortcut
property for a Cayley graph is not invariant under a change of
generating set.

\begin{thm}
  \thmlabel{bsns} Let $\Gamma$ be the Cayley graph of $\BS(1,2)$ with
  generating set $\{a,t,\tau\}$ where $\tau = t^2$.  Then $\Gamma$ is
  not shortcut.
\end{thm}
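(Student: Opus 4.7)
The plan is very short because essentially all of the work has been done in the preceding lemmas. By \Lemref{attaugc}, for every $k \ge 1$ the word
\[ w_k = a \tau^k a \tau^{-k} a^{-1} \tau^k a^{-1} \tau^{-k} \]
describes an isometric cycle in $\Gamma$ of length $4k + 4$. Since $4k+4 \to \infty$ as $k \to \infty$, there is no integer $\theta$ that bounds the lengths of isometric cycles in $\Gamma$, and hence $\Gamma$ fails to be shortcut by definition.

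Concretely, I would structure the proof as a single short paragraph: fix any $\theta \in \N$; choose $k$ with $4k+4 > \theta$; invoke \Lemref{attaugc} to exhibit an isometric cycle of length $4k+4 > \theta$; conclude that no such $\theta$ can bound isometric cycle length, so $\Gamma$ is not shortcut.

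The only conceptual content is the observation that all of the difficulty has been front-loaded into \Lemref{attaugeos} (which makes precise the counterintuitive fact that adding the generator $\tau = t^2$ makes the ``detour'' paths $\tau^\ell a \tau^{-k} a^{\pm 1} \tau^{k-\ell}$ geodesic) and \Lemref{attaugc} (which combines four such detours into a cycle whose every length-$(2k+2)$ arc is geodesic, forcing \Corref{isomisomemb} to apply). There is no main obstacle to overcome at this stage; the theorem is simply the ``unboundedness'' corollary of the construction already carried out.

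I would also add a one-sentence remark connecting back to the earlier results of the section: combined with \Thmref{bss}, this shows that the shortcut property for a Cayley graph depends on the choice of generating set, and combined with \Corref{isoper} and the fact \cite{Gersten:1992} that $\BS(1,2)$ has exponential Dehn function, it shows that $\BS(1,2)$ is shortcut (by \Thmref{bss}) but not strongly shortcut.
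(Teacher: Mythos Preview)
Your proposal is correct and matches the paper's approach exactly: the paper states the theorem immediately after \Lemref{attaugc} without a separate proof, treating it as the obvious consequence of having isometric cycles of length $4k+4$ for all $k \ge 1$. Your added remark about generating-set dependence and the failure of strong shortcutness also mirrors the commentary the paper places just before the theorem and at the start of \Secref{bsgp}.
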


\bibliographystyle{abbrv}
\bibliography{nima}
\end{document}